\newcommand{\innerthmname}{}% initialize
\theoremstyle{definition}
\newtheorem{theorem}[equation]{Theorem}
\newtheorem{lemma}[equation]{Lemma}
\newtheorem{corollary}[equation]{Corollary}
\theoremstyle{definition}
\newtheorem{definition}[equation]{Definition}
\theoremstyle{remark}
\newtheorem{example}[equation]{Example}
\newtheorem{remark}[equation]{Remark}
\numberwithin{equation}{section}
\newcommand\reallywidehat[1]{%
	\savestack{\tmpbox}{\stretchto{%
			\scaleto{%
				\scalerel*[\widthof{\ensuremath{#1}}]{\kern-.6pt\bigwedge\kern-.6pt}%
				{\rule[-\textheight/2]{1ex}{\textheight}}%WIDTH-LIMITED BIG WEDGE
			}{\textheight}% 
		}{0.5ex}}%
	\stackon[1pt]{#1}{\tmpbox}%
}
\title[Relative Generalized Boolean Dynamical System Algebras]{Relative Generalized Boolean Dynamical System Algebras}
\author[A. Zhang]{Allen Zhang}
\email{allenusca@gmail.com}
\begin{document}
	
	\begin{abstract}
		We study an algebraic analog of a $C^{\ast}$-algebra associated to a generalized Boolean dynamical system which parallels the relation between graph $C^{\ast}$-algebras and Leavitt path algebras. We prove that such algebras are Cuntz-Pimsner algebras and partial skew group rings and use these facts to prove a graded uniqueness theorem. We then describe the notion of a relative generalized Boolean dynamical system and generalize the graded uniqueness theorem to relative generalized Boolean dynamical system algebras. We use the graded uniqueness theorem to characterize the graded ideals of a relative generalized Boolean dynamical system algebra in terms of the underlying dynamical system. We prove that every generalized Boolean dynamical system algebra is Morita equivalent to an algebra associated to a generalized Boolean dynamical system with no singularities. Finally, we give an alternate characterization of the class of generalized Boolean dynamical system algebras that uses an underlying graph structure derived from Stone duality.
	\end{abstract}

	\maketitle

	\section{Introduction}
	The study of combinatorial objects and their associated $C^{\ast}$-algebras was started by Cuntz and Krieger in \cite{Cuntz1980} which gave way to a class of $C^{\ast}$-algebras known as graph $C^{\ast}$-algebras. As research on these $C^{\ast}$-algebras has continued, one avenue has been to generalize the combinatorial objects which $C^{\ast}$-algebras are generated from. One generalization of the graph $C^{\ast}$-algebras was the labelled graph $C^{\ast}$-algebras introduced by Bates and Pask in \cite{bates2007c}. A further generalization was the generalized Boolean dynamical system $C^{\ast}$-algebra, which was developed by De Castro, Carlsen, and Kang in \cite{CARLSEN2020124037, Castro2022CalgebrasOG, DECASTRO2023126662}.

	In the study of combinatorial $C^{\ast}$-algebras, an algebraic analog was found to preserve many interesting properties. This has bloomed into its own field of study, with the most well-studied of these analogs being the Leavitt path algebra which is the algebraic analog of a graph $C^{\ast}$-algebra. See \cite{AbramsSurvey} by Abrams for a historical exposition of the Leavitt path algebra. Recently, an algebraic analog of the labelled graph $C^{\ast}$-algebra was studied by Boava, de Castro, Gonçalves, and van Wyk in \cite{Boava2021LeavittPA} which they termed a labelled Leavitt path algebra. In this paper, we continue this line of work by constructing the algebraic analog of a generalized Boolean dynamical system $C^{\ast}$-algebra, which we call a generalized Boolean dynamical system algebra.

	In Section~\ref{prelim}, we review the definition of a (relative) generalized Boolean dynamical system and associate to a unital commutative ring $R$ and relative generalized Boolean dynamical system an $R$-algebra. In Section~\ref{partialskewgroup}, we use a technique from \cite{zhang2025partialactionsgeneralizedboolean} to realize a generalized Boolean dynamical system algebra as a partial skew group ring arising from an inverse semigroup associated to a relative generalized Boolean dynamical system in \cite{DECASTRO2023126662}. In Section~\ref{relativetonon}, we use a construction from \cite{CARLSEN2020124037} to realize relative generalized Boolean dynamical system algebras as generalized Boolean dynamical system algebras. 
	
	In Section~\ref{section:gradedunique}, we follow a technique used in \cite{Boava2021LeavittPA} for labelled Leavitt path algebras to realize generalized Boolean dynamical system algebras as Cuntz-Pimsner algebras and prove a graded uniqueness theorem. We then apply theorems from Section~\ref{relativetonon} to derive a graded uniqueness theorem for the more general relative generalized Boolean dynamical system algebras. In Section~\ref{section:gradedideal}, we mention a characterization for the graded ideals of a relative generalized Boolean dynamical system algebra that follows from the graded uniqueness theorem and ideas in \cite{CARLSEN2020124037} used to characterize the gauge-invariant ideals of a relative generalized Boolean dynamical system $C^{\ast}$-algebra.

	In Section~\ref{section:desing}, we prove a desingularization result for generalized Boolean dynamical system algebras by leveraging the associated inverse semigroup from Section~\ref{partialskewgroup} and applying theorems from \cite{zhang2025moritaequivalencesubringsapplications}. Desingularization results are known for both Leavitt path algebras and graph $C^{\ast}$-algebras \cite{abrams2008leavitt, Drinen2000}. A desingularization result has been established for weakly left-resolving labelled space $C^{\ast}$-algebras by Banjade, Chambers, and Ephrem in \cite{banjade2024singularities} and for labelled Leavitt path algebras of weakly left-resolving normal labelled spaces by Zhang in \cite{zhang2025moritaequivalencesubringsapplications}. To best of the author's knowledge, this is the first desingularization construction for a generalized Boolean dynamical system in either the $C^{\ast}$-algebra or algebraic case.
	
	Finally, in Section~\ref{section:labelled}, we define a class of combinatorial objects that we call \textit{generalized labelled spaces} that have an underlying graph combinatorial structure. We then prove that the class of algebras associated to generalized labelled spaces is equivalent to the class of generalized Boolean dynamical systems. This alternative characterization acts as a Stone duality for generalized Boolean dynamical systems and shows that we can realize all such dynamical systems with an appropriate labelled graph.

	\section{(Relative) Generalized Boolean Dynamical System Algebras} \label{prelim}
		
	We first review the definition of a relative generalized Boolean dynamical system. For a more detailed overview, see \cite{CARLSEN2020124037}.
	
	\begin{definition}[{\cite[Section~2.2]{Castro2022CalgebrasOG}}]
		$\mathcal B$ is a \textit{generalized Boolean algebra} if it is a distributive lattice with relative complements and a minimal element $\emptyset$. Morphisms of generalized Boolean algebras are morphisms that preserve the minimal element and all operations. A subset $\mathcal I \subseteq \mathcal B$ is called an \textit{ideal} if $a, b \in \mathcal I \Rightarrow a\vee b \in \mathcal I$ and $a \in \mathcal I, b \in \mathcal B \Rightarrow a \wedge b \in \mathcal I$. $\mathcal B$ has an order defined by $a \leq b \Leftrightarrow a \wedge b = a$. 
	\end{definition}
	
	\begin{remark}
		We often think of elements of $\mathcal B$ as subsets of a set. Hence, we use capital letters (e.g. $A, B$) to refer to elements in $\mathcal B$ and use notation $A \cup B$, $A \wedge B$, $A \setminus B$, and $A \subseteq B$ to refer to joins, meets, relative complements, and the order respectively. This intuition can be justified with Stone duality \cite{Lawson2011NonCommutativeSD}.
	\end{remark}
	\begin{definition}[{\cite[Section~2.3]{CARLSEN2020124037}}]
		Let $\mathcal I \subseteq \mathcal B$ be an ideal. There is an equivalence relation on $\mathcal B$ defined by \[A \sim B \Leftrightarrow \exists U \in \mathcal I \text{ such that } A \cup U = B \cup U\] For $A \in \mathcal B$, we refer to its equivalence class as $[A]_{\mathcal I}$. The set of all equivalence classes $\mathcal B/\mathcal I = \{[A]_{\mathcal I} \colon A \in \mathcal B\}$ is also a generalized Boolean algebra with the obvious operations on representatives.
	\end{definition}
	\begin{definition}[{\cite[Definition~3.2]{CARLSEN2020124037}}]
		A \textit{relative generalized Boolean dynamical system} is a pentuplet \[(\mathcal B, \mathcal L, \theta, \mathcal I, \mathcal J)\] with the following properties:
		
		$\mathcal B$ is a generalized Boolean algebra. $\mathcal L$ is a set of characters $\alpha \in \mathcal L$.  $\theta$ is a set indexed by $\alpha \in \mathcal L$ where each $\theta_{\alpha}: \mathcal B \rightarrow \mathcal B$ is a morphism of generalized Boolean algebras. Define $\mathcal F_{\alpha} \coloneqq \{A \in \mathcal B \colon \exists B \in \mathcal B, A \subseteq \theta_\alpha(B)\}$. $\mathcal I$ is a set of ideals of $\mathcal B$ indexed by $\alpha \in \mathcal L$ such that $\mathcal F_{\alpha} \subseteq \mathcal I_{\alpha}$.
	 	 
 	 	For every $A \in \mathcal B$, define $\Delta^{(\mathcal B, \mathcal L, \theta, \mathcal I)}_A \coloneqq \{\alpha \in \mathcal L \mid \theta_{\alpha}(A) \neq \emptyset\}$. Define the ideal of \textit{regular} sets $\mathcal B^{(\mathcal B, \mathcal L, \theta, \mathcal I)}_{\text{reg}} \coloneqq \{A \in \mathcal B \colon \forall B \subseteq A,  0 < | \Delta_B| < \infty\}$. All other sets $B \in \mathcal B$ are called \textit{singular}. Define $\mathcal B^{(\mathcal B, \mathcal L, \theta, \mathcal I)}_{\text{sink}} = \{B \in \mathcal B \colon |\Delta_B| = 0\}$.
		
		$\mathcal J$ is an ideal of $\mathcal B^{(\mathcal B, \mathcal L, \theta, \mathcal I)}_{\text{reg}}$. When the dynamical system is clear, we use the notation $\Delta_A$ and  $\mathcal B_{reg}$. When $\mathcal J = \mathcal B^{(\mathcal B, \mathcal L, \theta, \mathcal I)}_{\text{reg}}$, we drop $\mathcal J$ and call the quadruplet $(\mathcal B, \mathcal L, \theta, \mathcal I)$ a \textit{generalized Boolean dynamical system}.
	\end{definition}

	\begin{definition} 
		Let $\mathcal L^{\ast}$ be the set of finite length (possibly empty) words with characters from $\mathcal L$. Define $\theta_{\omega}: \mathcal B \rightarrow \mathcal B$ to be the identity and $\mathcal I_{\omega} = \mathcal B$. For $\omega \neq \alpha = \alpha_1 \ldots \alpha_n \in \mathcal L^{\ast}$, define $\theta_{\alpha} = \theta_{\alpha_n} \circ \ldots \circ \theta_{\alpha_1}$ and $\mathcal I_{\alpha} = \{A \in \mathcal B \colon A \subseteq \theta_{\alpha_2 \ldots \alpha_n}(B) \text{ for some } B \in \mathcal I_{\alpha_1}\}$. 
	\end{definition}

	We now define an algebra associated to a relative generalized Boolean dynamical system. Throughout, let $(\mathcal B, \mathcal L, \theta, \mathcal I, \mathcal J)$ be a relative generalized Boolean dynamical system and let $R$ be a unital commutative ring.
	
	\begin{definition} \label{definition:boolrelations}
	Define the $R$-algebra $L_R(\mathcal B, \mathcal L, \theta, \mathcal I, \mathcal J)$ to be the $R$-algebra generated by symbols $\{p_A\}_{A \in \mathcal B} \cup \{s_{\alpha, A}, s_{\alpha, A}\}_{\alpha \in \mathcal L, B \in \mathcal I_{\alpha}}$ and the following relations:
	
	\begin{enumerate}
		\item $p_{\emptyset} = 0, p_{A\cap A'} = p_A p_{A'}, p_{A \cup A'} = p_A + p_{A'} - p_{A \cap A'}$.
		\item $p_{A}s_{\alpha, B} = s_{\alpha, B} p_{\theta_{\alpha}(A)}$ and $s_{\alpha, B}^{\ast} p_A = p_{\theta_{\alpha}(A)} s_{\alpha, B}^{\ast}$
		\item $s_{\alpha, B}^{\ast} s_{\alpha', B'} = \delta_{\alpha, \alpha'} p_{B \cap B'}$
		\item $s_{\alpha, B}p_{B'} = s_{\alpha, B \cap B'}$ and $p_{B'} s^{\ast}_{\alpha, B}= s^{\ast}_{\alpha, B \cap B'}$
		\item $p_A = \sum_{\alpha \in \Delta_A} s_{\alpha, \theta_{\alpha}(A)} s_{\alpha, \theta_{\alpha}(A)}^{\ast}$ for all $A \in \mathcal J$.
	\end{enumerate}
	
	For a generalized Boolean dynamical system $(\mathcal B, \mathcal L, \theta, \mathcal I)$, we define $L_R(\mathcal B, \mathcal L, \theta, \mathcal I) \coloneqq L_R(\mathcal B, \mathcal L, \theta, \mathcal I, \mathcal B_{\text{reg}})$.
	
	\end{definition}
	
	\begin{example} \label{example:labelledspace}
	As in the $C^{\ast}$-algebra case \cite[Example~4.2]{CARLSEN2020124037}, generalized Boolean dynamical system algebras are a generalization of labelled Leavitt path algebras associated to weakly left-resolving normal labelled spaces, which themselves are generalizations of Leavitt path algebras.
	\end{example}

	\begin{theorem} \label{theorem:zgrading}
		$L_R(\mathcal B, \mathcal L, \theta, \mathcal I, \mathcal J)$ has a $\mathbb Z$-grading given by \[s_{\alpha, A} \mapsto 1, s_{\alpha, A}^{\ast} \mapsto -1 \text{ for } \alpha \in \mathcal L \text{ and } A \in \mathcal I_{\alpha}\]
		\[p_A \mapsto 0 \text{ for } A \in \mathcal B \]
	\end{theorem}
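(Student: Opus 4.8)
The plan is the standard one for putting a grading on an algebra presented by generators and relations: exhibit a $\mathbb Z$-grading on the free algebra on the generators under which every defining relation is a homogeneous element, and then observe that the resulting relation ideal is graded, so the grading descends to the quotient.

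Concretely, I would first let $F$ denote the free $R$-algebra on the generating set $\{p_A\}_{A\in\mathcal B}\cup\{s_{\alpha,A},\, s_{\alpha,A}^{\ast}\}_{\alpha\in\mathcal L,\, A\in\mathcal I_\alpha}$, reading $\ast$ simply as part of the name of the symbol $s_{\alpha,A}^{\ast}$ (not as the output of an involution). Assign degree $0$ to each $p_A$, degree $1$ to each $s_{\alpha,A}$, and degree $-1$ to each $s_{\alpha,A}^{\ast}$; extending additively over products gives every monomial of $F$ a well-defined integer degree, and letting $F_n$ be the $R$-submodule of $F$ spanned by the monomials of degree $n$ yields a $\mathbb Z$-grading $F=\bigoplus_{n\in\mathbb Z}F_n$ of $F$ as an $R$-algebra. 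There is no inconsistency to worry about here since each generator is assigned a single degree.

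Next I would verify, relation by relation in Definition~\ref{definition:boolrelations}, that each element of $F$ set to zero is homogeneous. For (1), $p_\emptyset$, $p_{A\cap A'}-p_Ap_{A'}$, and $p_{A\cup A'}-p_A-p_{A'}+p_{A\cap A'}$ all lie in $F_0$. For (2), $p_As_{\alpha,B}-s_{\alpha,B}p_{\theta_\alpha(A)}\in F_1$ and $s_{\alpha,B}^{\ast}p_A-p_{\theta_\alpha(A)}s_{\alpha,B}^{\ast}\in F_{-1}$. For (3), $s_{\alpha,B}^{\ast}s_{\alpha',B'}-\delta_{\alpha,\alpha'}p_{B\cap B'}\in F_0$. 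For (4), $s_{\alpha,B}p_{B'}-s_{\alpha,B\cap B'}\in F_1$ and $p_{B'}s_{\alpha,B}^{\ast}-s_{\alpha,B\cap B'}^{\ast}\in F_{-1}$. For (5), because $A\in\mathcal J\subseteq\mathcal B_{\text{reg}}$ the index set $\Delta_A$ is finite, so $\sum_{\alpha\in\Delta_A}s_{\alpha,\theta_\alpha(A)}s_{\alpha,\theta_\alpha(A)}^{\ast}$ is a genuine element of $F$, and together with $p_A$ it lies in $F_0$. Hence the two-sided ideal $N\subseteq F$ generated by all of these elements is generated by homogeneous elements, so it is a graded ideal, $N=\bigoplus_{n\in\mathbb Z}(N\cap F_n)$.

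Finally I would conclude that $L_R(\mathcal B,\mathcal L,\theta,\mathcal I,\mathcal J)=F/N$ inherits the quotient grading, with degree-$n$ component $(F_n+N)/N$, and that under the quotient map the images of $s_{\alpha,A}$, $s_{\alpha,A}^{\ast}$, and $p_A$ have degrees $1$, $-1$, and $0$ respectively, which is exactly the assertion. The proof is essentially bookkeeping; there is no substantive obstacle. The only points deserving a sentence of care are the convention that $s_{\alpha,A}^{\ast}$ is a formal generator (so its degree may be prescribed freely), and the finiteness of $\Delta_A$ in relation~(5), which is precisely what makes that relation a well-formed element of $F$ in the first place.
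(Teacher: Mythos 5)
Your proof is correct and is exactly the standard argument the paper defers to (the paper simply cites \cite[Proposition~3.8]{Boava2021LeavittPA} and \cite[Section~2.1]{Abrams2017} for this routine verification): grade the free algebra on the generators, check that every defining relation is homogeneous, and pass to the quotient by the resulting graded ideal. Your two points of care --- treating $s_{\alpha,A}^{\ast}$ as a formal generator and noting the finiteness of $\Delta_A$ in relation~(5) --- are exactly the right ones.
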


	\begin{proof}
		The proof is standard and is found in \cite[Proposition~3.8]{Boava2021LeavittPA} for labelled Leavitt path algebras and in \cite[Section~2.1]{Abrams2017} for Leavitt path algebras.
	\end{proof}
	
	The following lemmas are standard and used in computations throughout the paper.

	\begin{lemma}[{\cite[Lemma~3.5]{Boava2021LeavittPA}}] \label{lemma:disjointsplit} Let $x = \mathrm{span}_R \{p_A \colon A \in \mathcal B\}\subseteq L_R(\mathcal B, \mathcal L, \theta, \mathcal I, \mathcal J)$ with \[x = \sum_{i=1}^n r_i p_{A_i}\]
		
	For $A_i \in \mathcal B$ and $r_i \in R$. Then there exists a family of pairwise disjoint sets $\{C_1, \ldots, C_m\}$ and coefficients $s_1, \ldots, s_m \in R$ such that \[x = \sum_{i=1}^m s_i p_{C_i}\] and for all $j \in [1, m]$, there exists some $i \in [1, n]$ such that $C_j \subseteq A_i$.
	\end{lemma}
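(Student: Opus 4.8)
The plan is to reduce a span of projections $p_{A_i}$ to a span over pairwise disjoint pieces by an inclusion–exclusion argument on the Boolean algebra, using only relation (1) from Definition~\ref{definition:boolrelations}. First I would observe that the subalgebra $x = \mathrm{span}_R\{p_A : A \in \mathcal B\}$ is closed under multiplication and that by relation (1) the assignment $A \mapsto p_A$ behaves like a finitely additive measure: $p_{A \cap A'} = p_A p_{A'}$ and $p_{A \cup A'} = p_A + p_{A'} - p_{A\cap A'}$, with $p_\emptyset = 0$. This is exactly the structure needed for an atomization argument.

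Next I would set up the atomization of the finite family $\{A_1, \dots, A_n\}$. For each nonempty subset $S \subseteq \{1, \dots, n\}$, define the ``cell'' $D_S = \big(\bigcap_{i \in S} A_i\big) \wedge \text{(complement of } \bigcup_{i \notin S} A_i)$, taken inside the generalized Boolean algebra using relative complements; these cells are pairwise disjoint and each $A_i$ is a finite disjoint union of the cells $D_S$ with $i \in S$. I would then prove by induction on $|S^c|$ (or directly by inclusion–exclusion) that $p_{A_i} = \sum_{S \ni i} p_{D_S}$ in $L_R(\mathcal B, \mathcal L, \theta, \mathcal I, \mathcal J)$, using relation (1) repeatedly to expand $p$ of a disjoint union as a sum. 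Substituting into $x = \sum_i r_i p_{A_i}$ and collecting gives $x = \sum_S \big(\sum_{i \in S} r_i\big) p_{D_S}$, so taking $\{C_1, \dots, C_m\}$ to be the nonempty cells $D_S$ and $s_j = \sum_{i \in S} r_i$ the corresponding coefficient sums yields the claim; each $C_j = D_S \subseteq A_i$ for any $i \in S$, and such an $i$ exists since $S$ is nonempty.

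The main obstacle, such as it is, is purely bookkeeping at the level of the generalized Boolean algebra $\mathcal B$: one must be careful that $\mathcal B$ need not have a top element, so ``complements'' must be taken relative to a fixed element containing all the $A_i$ (e.g. $A_1 \cup \dots \cup A_n$), and one must check that the cells $D_S$ genuinely partition each $A_i$ and are pairwise disjoint using only distributivity and the relative-complement axioms. Since this is a standard fact (it is cited as \cite[Lemma~3.5]{Boava2021LeavittPA} for labelled Leavitt path algebras), I would either transport that proof verbatim or give the short inclusion–exclusion argument above; the $C^*$-algebraic and algebraic cases are identical here because only relation (1) is used.
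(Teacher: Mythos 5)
Your proposal is correct and is essentially the argument the paper relies on: the paper simply defers to \cite[Lemma~3.5]{Boava2021LeavittPA}, whose proof is exactly this atomization of $\{A_1,\dots,A_n\}$ into the pairwise disjoint cells $D_S$ (with complements taken relative to $A_1\cup\dots\cup A_n$) followed by finite additivity of $A\mapsto p_A$ from relation (1). Your attention to the absence of a top element in $\mathcal B$ is the only subtlety, and you handle it correctly.
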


	\begin{proof}
		The proof is the same as \cite[Lemma~3.5]{Boava2021LeavittPA}.
	\end{proof}

	\begin{lemma} \label{lemma:qcalc} Define the symbol $q_A \coloneqq p_A - \sum_{\alpha \in \Delta_A} s_{\alpha, \theta_{\alpha}(A)} s^{\ast}_{\alpha, \theta_{\alpha}(A)}$ whenever $|\Delta_A| < \infty$. Then $p_Aq_B = q_B p_A = q_{B \cap A}$ for $B \in \mathcal B_{\text{reg}}$ and $A \in \mathcal B$.	
	\end{lemma}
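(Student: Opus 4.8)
The plan is to establish both identities by a single direct computation: expand $q_B = p_B - \sum_{\beta \in \Delta_B} s_{\beta, \theta_\beta(B)} s_{\beta, \theta_\beta(B)}^{\ast}$ (a finite sum, since $B \in \mathcal B_{\mathrm{reg}}$) and push $p_A$ through each summand using the relations of Definition~\ref{definition:boolrelations}. Before starting I would record two preliminary facts. First, setting $B' = \emptyset$ in relation~(4) gives $s_{\alpha, \emptyset} = s_{\alpha, B} p_{\emptyset} = 0$, hence $s_{\alpha, C} s_{\alpha, C}^{\ast} = 0$ whenever $C = \emptyset$; this lets us add or delete ``empty'' summands at will. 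Second, $\mathcal B_{\mathrm{reg}}$ is an ideal, so $A \cap B \in \mathcal B_{\mathrm{reg}}$ and in particular $|\Delta_{A \cap B}| < \infty$, so that $q_{A \cap B}$ is defined; moreover $\theta_\beta(A \cap B) \in \mathcal F_\beta \subseteq \mathcal I_\beta$, so every symbol written below is a legitimate generator.

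The core step is the single-summand identity
\[
p_A\, s_{\beta, \theta_\beta(B)}\, s_{\beta, \theta_\beta(B)}^{\ast} = s_{\beta, \theta_\beta(A\cap B)}\, s_{\beta, \theta_\beta(A \cap B)}^{\ast},
\]
valid for every $\beta$. To prove it I would first move $p_A$ past $s_{\beta, \theta_\beta(B)}$ by relation~(2), turning the left side into $s_{\beta, \theta_\beta(B)}\, p_{\theta_\beta(A)}\, s_{\beta, \theta_\beta(B)}^{\ast}$; then write $p_{\theta_\beta(A)} = p_{\theta_\beta(A)} p_{\theta_\beta(A)}$ by relation~(1) and absorb one copy on each side via relation~(4), obtaining $s_{\beta, \theta_\beta(B) \cap \theta_\beta(A)}\, s_{\beta, \theta_\beta(B) \cap \theta_\beta(A)}^{\ast}$; finally use that $\theta_\beta$ is a morphism of generalized Boolean algebras, hence preserves meets, to rewrite $\theta_\beta(B) \cap \theta_\beta(A) = \theta_\beta(A \cap B)$. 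Applying the second halves of relations~(2) and~(4) in the same way yields $s_{\beta, \theta_\beta(B)}\, s_{\beta, \theta_\beta(B)}^{\ast}\, p_A = s_{\beta, \theta_\beta(A \cap B)}\, s_{\beta, \theta_\beta(A\cap B)}^{\ast}$ as well.

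With this in hand, summing over the finite set $\Delta_B$ and using $p_A p_B = p_{A \cap B}$ gives $p_A q_B = p_{A\cap B} - \sum_{\beta \in \Delta_B} s_{\beta, \theta_\beta(A\cap B)} s_{\beta, \theta_\beta(A \cap B)}^{\ast}$, and symmetrically for $q_B p_A$. It then remains to replace the index set: if $\beta \in \Delta_B \setminus \Delta_{A\cap B}$ then $\theta_\beta(A\cap B) = \emptyset$ and the summand vanishes by the first preliminary fact, while $\Delta_{A\cap B} \subseteq \Delta_B$ because $\theta_\beta(A\cap B) \subseteq \theta_\beta(B)$; hence the sum over $\Delta_B$ equals the sum over $\Delta_{A\cap B}$, and the right-hand side is exactly $q_{A\cap B}$. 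I do not anticipate a genuine obstacle here — the argument is a bookkeeping exercise in the defining relations — and the only points needing care are the empty-set edge cases and checking that each symbol written down is an allowed generator (i.e. that the relevant sets lie in the appropriate $\mathcal I_\beta$), both dispatched by the preliminary observations.
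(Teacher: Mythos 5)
Your proposal is correct and follows essentially the same route as the paper's proof: push $p_A$ through each summand via relation~(2), absorb the idempotent $p_{\theta_\beta(A)}$ on both sides via relation~(4), use that $\theta_\beta$ preserves meets, and then swap the index set from $\Delta_B$ to $\Delta_{A\cap B}$ because the extra terms vanish. Your write-up is in fact slightly more careful than the paper's (you keep explicit track of the $p_Ap_B=p_{A\cap B}$ term and verify that every symbol is a legitimate generator), but there is no difference in method.
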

	\begin{proof}
		$p_A q_B = \sum_{\alpha \in \Delta_B} p_A s_{\alpha, \theta_{\alpha}(B)}s^{\ast}_{\alpha, \theta_{\alpha}(A)}$. This is equal to \[\sum_{\alpha \in \Delta_B} s_{\alpha, \theta_{\alpha}(B)} p_{\theta_{\alpha}(A)}s^{\ast}_{\alpha, \theta_{\alpha}(A)}\] It's not hard to show that this is also equal to $q_B p_A$. Furthermore, we have that $p_{\theta_{\alpha}(A)}^2 = p_{\theta_{\alpha}(A)}$ so both sums are equal to \[\sum_{\alpha \in \Delta_B} s_{\alpha, \theta_{\alpha}(B)} p_{\theta_{\alpha}(A)}p_{\theta_{\alpha}(A)}s^{\ast}_{\alpha, \theta_{\alpha}(A)}\] Using the fact that $\theta_{\alpha}$ preserves intersections, this is equal to $\sum_{\alpha \in \Delta_B} s_{\alpha, \theta_{\alpha}(A \cap B)} s^{\ast}_{\alpha, \theta_{\alpha}(A\cap B)}$. Now note that $\Delta_{A\cap B} \subseteq \Delta_{A}$ and for all $\alpha \in \Delta_A \setminus \Delta_{A\cap B}$ we have $\theta_{\alpha}(A \cap B) = \emptyset$ so any term associated with $\alpha \in \Delta_{A \cap B} \setminus \Delta_a$ in the sum is $0$. Hence, we get that the sum is $\sum_{\alpha \in \Delta_{A \cap B}} s_{\alpha, \theta_{\alpha}(A \cap B)} s^{\ast}_{\alpha, \theta_{\alpha}(A\cap B)} = q_{A \cap B}$.
	\end{proof}

	\section{Realization as a Partial Skew Group Ring} \label{partialskewgroup}
	Using the inverse semigroup associated to a generalized Boolean dynamical system in \cite[Section 3]{DECASTRO2023126662}, we realize our algebra $L_R(\mathcal B, \mathcal L, \theta, \mathcal I, \mathcal J)$ as a partial skew group ring. This section relies heavily on and uses notation from \cite{zhang2025partialactionsgeneralizedboolean}.
	
	\begin{definition}
		For a generalized Boolean dynamical system $(\mathcal B, \mathcal L, \theta, \mathcal I)$ define its inverse semigroup  \[S_{(\mathcal B, \mathcal L, \theta, \mathcal I)}= \{(\alpha, A, \beta) \colon \alpha, \beta \in \mathcal L^{\ast} \text{ and } \emptyset \neq A \in \mathcal I_{\alpha} \cap \mathcal I_{\beta}\}\]
		
		Multiplication is defined as 
		\[(\alpha, A, \beta)\cdot(\gamma, B, \delta) = \begin{cases}
			(\alpha, A \cap B, \delta) & \text{ if } \beta = \gamma \\
			(\alpha\gamma', \theta_{\gamma'}(A) \cap B, \delta) &\text{ if } \gamma = \beta \gamma' \text{ and } \theta_{\gamma'}(A) \cap B \neq \emptyset \\
			(\alpha, A \cap \theta_{\beta'}(B), \delta \beta') & \text{ if } \beta = \gamma \beta' \text{ and } A \cap \theta_{\beta'}(B) \neq \emptyset \\
			0 & \text{otherwise}
		\end{cases}\]
	with $(\alpha, A, \beta)^{\ast} = (\beta, A, \alpha)$ and the set of idempotents \[E = \{(\alpha, A, \alpha) \colon \alpha \in \mathcal L^{\ast} \text{ and } \emptyset \neq A \in \mathcal I_{\alpha}\}\cup\{0\}\]

	The semilattice of idempotents has natural order \[(\alpha, A, \alpha) \leq (\beta, B, \beta) \Leftrightarrow \alpha = \beta \alpha' \text { and } A \subseteq \theta_{\alpha'}(B)\]
		
	The inverse semigroup has grading $\varphi: S^{\times}_{(\mathcal B, \mathcal L, \theta, \mathcal I)} \rightarrow \mathbb F[\mathcal L]$ on the free group generated by $\mathcal L$ by taking \[(\alpha, A, \beta) \mapsto \alpha \beta^{-1}\] where we view $\alpha = \alpha_1 \alpha_2\ldots \alpha_n \in \mathbb F[\mathcal L]$. This grading makes the inverse semigroup strongly $E^{\ast}$-unitary.
		
	In the following we assume that representations for $g \in \mathbb F[\mathcal L]$ are reduced.
	The sets $\varphi^{-1}(g)$ are:
	\[\varphi^{-1}(g) = \begin{cases} 
		\{(p_1p, A, p_2p) \colon  p \in \mathcal L^{\ast}, \emptyset \neq A \in \mathcal I_{p_1p} \cap \mathcal I_{p_2p}\}& \text{ if } g = p_1 p_2^{-1} \text{ where } p_1, p_2 \in \mathcal L^{\ast} \\
		\emptyset & \text{otherwise}
	\end{cases}\]
	
	The sets $E_{g}$ are:
	\[E_{g} = \begin{cases} 
		\{(p_1p, A, p_1p) \colon  p \in \mathcal L^{\ast}, \emptyset \neq A \in \mathcal I_{p_1p} \cap \mathcal I_{p_2p}\} \cup \{0\} & \text{ if } g = p_1 p_2^{-1} \text{ where } p_1, p_2 \in \mathcal L^{\ast} \\
		\{0\} & \text{otherwise}
	\end{cases}\]
	
	The maps $\phi_{g^{-1}}: E_g \rightarrow E_{g^{-1}}$ act in the following manner for $g = p_1p_2^{-1}$:
	\[\phi_{g^{-1}}(p_1p, A, p_1p) = (p_2p, A, p_2p)\]
	
	The inverse semigroup $(S, \varphi)$ is orthogonal and semi-saturated.

	The isomorphism from the labelled Leavitt path algebra $L_R(\mathcal B, \mathcal L, \theta, \mathcal I)$ with generators $\{p_B\}_{B \in \mathcal B} \cup \{s_{\alpha, A}, s_{\alpha, A}^{\ast}\}_{\alpha \in \mathcal L, A \in \mathcal I_{\alpha}}$ to $L_R(S_{(\mathcal B, \mathcal L, \theta, \mathcal I)})$ is given by the map on generators 		\[p_B \mapsto (\omega, B, \omega)\delta_e, s_{a, A} \mapsto (a, A, a)\delta_a, \text{ and } s^{\ast}_{a, A} \mapsto (\omega, A, \omega)\delta_{a^{-1}}\]
	
	\end{definition}

	The proofs of all the claims in our definition are very similar to \cite[Section~5.2]{zhang2025partialactionsgeneralizedboolean}. Hence, we omit the proofs of all parts besides the final isomorphism, which we prove below.

	\begin{theorem} \label{theorem:skewiso}
	\[L_R(\mathcal B, \mathcal L, \theta, \mathcal I) \cong L_R(S_{(\mathcal B, \mathcal L, \theta, \mathcal I)})\] with maps on generators \[p_B \mapsto (\omega, B, \omega)\delta_e, s_{a, A} \mapsto (a, A, a)\delta_a, \text{ and } s^{\ast}_{a, A} \mapsto (\omega, A, \omega)\delta_{a^{-1}}\] where we use the grading $(\alpha, A, \beta) \mapsto \alpha \beta^{-1}$ to compute $L_R(S_{(\mathcal B, \mathcal L, \theta, \mathcal I)})$.
	\end{theorem}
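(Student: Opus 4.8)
The strategy is to construct $R$-algebra homomorphisms in both directions and check that they are mutually inverse on the (clearly specified) generating sets of the two algebras; the argument runs parallel to \cite[Section~5.2]{zhang2025partialactionsgeneralizedboolean}. Write $S:=S_{(\mathcal B,\mathcal L,\theta,\mathcal I)}$ and recall that $L_R(S)$ is the partial skew group ring of the partial action of $\mathbb F[\mathcal L]$ determined by the data $(E_g,\phi_{g^{-1}})$ above, so that $L_R(S)=\bigoplus_{g\in\mathbb F[\mathcal L]}D_g\delta_g$, where $D_e$ is spanned by the idempotents $(\alpha,A,\alpha)\delta_e$ and whose multiplication encodes both the additivity of $\mathcal B$ and the finite-cover (Cuntz--Krieger-type) relation for regular sets.

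First we would define the forward map $\phi\colon L_R(\mathcal B,\mathcal L,\theta,\mathcal I)\to L_R(S)$ on generators by the displayed formulas and verify that relations (1)--(5) of Definition~\ref{definition:boolrelations} hold among the images. Relations (1)--(4) reduce to short computations with the multiplication table of $S$ together with the observation that $B\mapsto(\omega,B,\omega)\delta_e$ is a morphism of generalized Boolean algebras into the degree-$e$ part of $L_R(S)$; for instance the image of $s_{\alpha,B}^{\ast}s_{\alpha',B'}$ is $\bigl((\omega,B,\omega)\delta_{\alpha^{-1}}\bigr)\bigl((\alpha',B',\alpha')\delta_{\alpha'}\bigr)$, which by the second and third cases of the product in $S$ collapses to $\delta_{\alpha,\alpha'}\,(\omega,B\cap B',\omega)\delta_e$, matching relation (3). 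The only relation requiring genuine care is (5): for $A\in\mathcal B_{\mathrm{reg}}$ the family $\{(\alpha,\theta_\alpha(A),\alpha)\colon\alpha\in\Delta_A\}$ is a \emph{finite} cover of $(\omega,A,\omega)$ in the semilattice of idempotents of $S$ (finiteness being exactly regularity), and the finite-cover relation built into $L_R(S)$ then gives $(\omega,A,\omega)\delta_e=\sum_{\alpha\in\Delta_A}(\alpha,\theta_\alpha(A),\alpha)\delta_e$, which is precisely the image of relation (5).

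For the reverse map we would first attach to each word $\alpha=\alpha_1\cdots\alpha_n\in\mathcal L^{\ast}$ and each $A\in\mathcal I_\alpha$ an element $\widetilde s_{\alpha,A}\in L_R(\mathcal B,\mathcal L,\theta,\mathcal I)$ by setting $\widetilde s_{\omega,A}=p_A$ and $\widetilde s_{\alpha,A}=s_{\alpha_1,B}\,\widetilde s_{\alpha_2\cdots\alpha_n,A}$ for any $B\in\mathcal I_{\alpha_1}$ with $A\subseteq\theta_{\alpha_2\cdots\alpha_n}(B)$; the containments $\mathcal F_\alpha\subseteq\mathcal I_\alpha$ guarantee $A\in\mathcal I_{\alpha_2\cdots\alpha_n}$, so the recursion is legitimate, and relations (2) and (4) show that $\widetilde s_{\alpha,A}$ is independent of the intermediate choices and satisfies $p_{A'}\widetilde s_{\alpha,A}=\widetilde s_{\alpha,A}\,p_{\theta_\alpha(A')}$ along with the word-level analogues of (2)--(4). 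Defining $\pi(\alpha,A,\beta):=\widetilde s_{\alpha,A}\,\widetilde s_{\beta,A}^{\,\ast}$ for $(\alpha,A,\beta)\in S$ (well defined since $A\in\mathcal I_\alpha\cap\mathcal I_\beta$) and $\pi(0)=0$, one checks that $\pi$ respects the four cases of the multiplication of $S$ and is suitably additive on the semilattice $E$, i.e. that it is an $S$-family in $L_R(\mathcal B,\mathcal L,\theta,\mathcal I)$. The universal property of the partial skew group ring $L_R(S)$ then promotes $\pi$ to an $R$-algebra homomorphism $\psi\colon L_R(S)\to L_R(\mathcal B,\mathcal L,\theta,\mathcal I)$, which on the elements corresponding to the generators takes the values $(\omega,B,\omega)\delta_e\mapsto\pi(\omega,B,\omega)=p_B$, $(a,A,a)\delta_a\mapsto\pi(a,A,\omega)=\widetilde s_{a,A}\,p_A=s_{a,A}$, and $(\omega,A,\omega)\delta_{a^{-1}}\mapsto\pi(\omega,A,a)=p_A\,\widetilde s_{a,A}^{\,\ast}=s^{\ast}_{a,A}$.

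Finally we would check $\psi\circ\phi=\mathrm{id}$ and $\phi\circ\psi=\mathrm{id}$. Both composites are $R$-algebra endomorphisms, so it suffices to evaluate them on generators: $\psi\phi(p_B)=p_B$, $\psi\phi(s_{a,A})=s_{a,A}$, $\psi\phi(s^{\ast}_{a,A})=s^{\ast}_{a,A}$ by the formulas above, while $\phi$ undoes $\psi$ on the elements $(\omega,B,\omega)\delta_e$, $(a,A,a)\delta_a$, $(\omega,A,\omega)\delta_{a^{-1}}$, which generate $L_R(S)$ because the partial action is semi-saturated. We expect the main obstacle to be bookkeeping rather than conceptual: making the definition of $\widetilde s_{\alpha,A}$ genuinely well posed and compatible with all of relations (2)--(4), and pushing through cleanly the four-case verification that $\pi$ is multiplicative (it mirrors the four cases in the product of $S$). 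The one conceptual point, relation (5), reduces to identifying it with the finite-cover relation of $L_R(S)$, which is exactly where the hypothesis $\mathcal F_\alpha\subseteq\mathcal I_\alpha$ and the regularity of the sets in $\mathcal B_{\mathrm{reg}}$ are used.
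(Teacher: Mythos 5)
Your construction of the forward homomorphism coincides with the paper's: verify relations (1)--(5) of Definition~\ref{definition:boolrelations} on the images and invoke the universal property of $L_R(\mathcal B,\mathcal L,\theta,\mathcal I)$; your treatment of relation (5) as the finite-cover relation is correct, and your closing remark that semi-saturation plus the elements $s_{\alpha,A}s_{\alpha,A}^{\ast}$ hitting the idempotents $(\alpha,A,\alpha)\delta_e$ gives surjectivity matches the paper's argument. Where you diverge is injectivity. The paper does not build an inverse map at all: it defers injectivity to the graded uniqueness theorem (Theorem~\ref{gradednonrel}), and is careful to note that this is not circular because the only input that theorem needs from the present construction is Corollary~\ref{corollary:nonzero}, which uses only the existence of the forward map, not its injectivity.

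Your alternative --- defining $\pi(\alpha,A,\beta)=\widetilde s_{\alpha,A}\,\widetilde s_{\beta,A}^{\ast}$ and promoting it to $\psi\colon L_R(S)\to L_R(\mathcal B,\mathcal L,\theta,\mathcal I)$ via ``the universal property of the partial skew group ring'' --- has a genuine gap at exactly that promotion step. The ring $L_R(S)=\bigoplus_g D_g\delta_g$ is constructed concretely, not presented by generators and relations, and a partial skew group ring does not come equipped with a universal property for representations of the underlying inverse semigroup. To know that $\psi$ is well defined you must show that every $R$-linear relation holding among the spanning elements $(\alpha,A,\alpha)\delta_g$ of each $D_g$ is preserved by $\pi$; doing so amounts to exhibiting a presentation of $D_e$ (and of each $D_g$) by precisely the Boolean and finite-cover relations, which is essentially the content of the isomorphism you are trying to prove. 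Such a universal property can be established in the tight/Steinberg-algebra picture, but it is itself a theorem requiring proof, not a formal consequence of the construction; as written, the well-definedness of $\psi$ is asserted at exactly the point where the real difficulty sits. Either supply that presentation of the $D_g$, or follow the paper's route and deduce injectivity from Theorem~\ref{gradednonrel}, arranging the logical dependencies as the paper does so that only the existence of the forward map feeds into Corollary~\ref{corollary:nonzero}.
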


	\begin{proof}
	This proof will extensively use, without explicit reference, calculations for $L_R(S_{(\mathcal B, \mathcal L, \theta, \mathcal I)})$ found in \cite[Lemma~4.15]{zhang2025partialactionsgeneralizedboolean}. We first check that the mapping is well-defined by showing that the images obey the relations in Definition~\ref{definition:boolrelations}. 

	\begin{enumerate}
	\item Same as (1) in \cite[Lemma~5.7]{zhang2025partialactionsgeneralizedboolean}.
	\item \[p_A s_{a, B} \mapsto ((\omega, A, \omega) \delta_e) ((a, B, a)\delta_a) = (a, \theta_a(A) \cap B, a) \delta_a\] \[s_{a, B} p_{\theta_a(A)} \mapsto ((a, B, a) \delta_a)((\omega, \theta_a(A), \omega)\delta_e) = (a, B \cap \theta_a(A), a) \delta_a \] The other equality is much of the same.
	\item \[s_{a, B}^{\ast} s_{a', B'} \mapsto ((\omega, B, \omega) \delta_{a^{-1}}) ((a', B', a')\delta_{a'})\]\[=\delta_{aa'} (\omega, B, \omega) (\omega, B', \omega)\delta_e = \delta_{aa'}(a, B \cap B', a) \delta_e\] which is the image of $\delta_{aa'} p_{B \cap B'}$. In this proof, we used the fact that if $a \neq a'$, then $E_{a^{-1}a'} = \emptyset$ so the product must be $0$ whenever $a \neq a'$.
	\item \[s_{a, B} p_{B'} \mapsto ((a, A, a) \delta_a)((\omega, B, \omega) \delta_e)\]\[=\phi_a((\omega, A, \omega)(\omega, B, \omega))\delta_a = (a, A \cap B, a) \delta_a\] which is the image of $s_{a, A \cap B}$. The other equality is much of the same.
	\item The proof is same as \cite[Lemma~5.7]{zhang2025partialactionsgeneralizedboolean}.
	\end{enumerate}

	With the relations established and using the universality of $L_R(\mathcal B, \mathcal L, \theta, \mathcal I)$, we find that there is an $R$-algebra morphism to $L_R(S_{(\mathcal B, \mathcal L, \theta, \mathcal I)})$ induced by the given map on the generators. We now prove that the morphism is surjective using \cite[Corollary~4.17]{zhang2025partialactionsgeneralizedboolean} in a similar way as \cite[Lemma~5.8]{zhang2025partialactionsgeneralizedboolean}. 
	
	The difference in this proof is that our covers on $E_{a^{-1}}$ and $E_a$ are defined as $C_{a^{-1}} = \{(\omega, A, \omega) \colon 0 \neq A \in \mathcal I_a\}$ and $C_a = \{(a, A, a) \colon 0 \neq A \in \mathcal I_a \}$. Their respective values in the algebra are clearly in the image of our morphism as they are the images of $s_{a, A}^{\ast}$ and $s_{a, A}$ respectively.
	
	Now let $(\alpha, A, \alpha) \in E$. We want to show that $(\alpha, A, \alpha) \delta_e$ is in the image of our map. If $\alpha = \omega$, we have that $(\omega, A, \omega)\delta_e$ is in the image of $p_A$. Otherwise, write $\alpha = \alpha_1 \ldots \alpha_n$. In this case, as in \cite[Definition~3.6]{CARLSEN2020124037}, we define \[s_{\alpha, A} = s_{\alpha_1, B} s_{\alpha_2, \theta_{\alpha_2}(B)} s_{\alpha_3, \theta_{\alpha_2\alpha_3}(B)} \ldots s_{\alpha_n, A}\] It's not hard to calculate then that $s_{\alpha, A} \mapsto (\alpha, A, \alpha) \delta_{\alpha}$ and $s_{\alpha, A}^{\ast} \mapsto (\omega, A, \omega) \delta_{\alpha^{-1}}$. It's not hard to show then that $s_{\alpha, A} s_{\alpha, A}^{\ast} \mapsto (\alpha, A, \alpha) \delta_e$. This finishes the proof that the image contains a set of generators for $L_R(S_{(\mathcal B, \mathcal L, \theta, \mathcal I)})$, so the map is surjective.
	
	Similarly to \cite[Proposition~4.11 and Theorem~5.6]{Boava2021LeavittPA}, the proof of injectivity follows immediately from the graded uniqueness theorem for $L_R(\mathcal B, \mathcal L, \theta, \mathcal I)$, to be proved in Theorem~\ref{gradednonrel}. After the graded uniqueness theorem, we prove that the map is injective in the same way as in \cite[Lemma~5.8]{zhang2025partialactionsgeneralizedboolean}. However, the proof of the graded uniqueness theorem needs the below Corollary~\ref{corollary:nonzero}. Importantly, the corollary doesn't depend on the injectivity of the map, so the argument is not circular.

	\end{proof}
	
	\begin{corollary} \label{corollary:nonzero}
	Let $0 \neq r \in R$ be arbitrary. Then, for all $B \in \mathcal B$ and $\alpha \in \mathcal L^{\ast}$ and $\emptyset \neq A \in \mathcal I_\alpha$ we have that \[rp_B \neq 0\]\[rs_{\alpha, A} \neq 0\]
	\end{corollary}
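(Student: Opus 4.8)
The plan is to push the problem forward along the $R$-algebra homomorphism $\pi\colon L_R(\mathcal B,\mathcal L,\theta,\mathcal I)\to L_R(S_{(\mathcal B,\mathcal L,\theta,\mathcal I)})$ constructed in the first part of the proof of Theorem~\ref{theorem:skewiso}; that construction does not use this corollary, so there is no circularity. Since $\pi$ is an $R$-algebra map, in order to prove $rp_B\neq 0$ it suffices to show $\pi(rp_B)=r(\omega,B,\omega)\delta_e\neq 0$ in $L_R(S_{(\mathcal B,\mathcal L,\theta,\mathcal I)})$, and in order to prove $rs_{\alpha,A}\neq 0$ it suffices to show $\pi(rs_{\alpha,A})=r(\alpha,A,\alpha)\delta_\alpha\neq 0$; here the identity $\pi(s_{\alpha,A})=(\alpha,A,\alpha)\delta_\alpha$ for a word $\alpha$ is precisely the computation already carried out in the proof of Theorem~\ref{theorem:skewiso}. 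I would also note at the outset that the statement requires $B\neq\emptyset$, since $p_\emptyset=0$.

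Next I would invoke the structure of $L_R(S_{(\mathcal B,\mathcal L,\theta,\mathcal I)})$ as a partial skew group ring for a partial action of $\mathbb F[\mathcal L]$ (the content of \cite{zhang2025partialactionsgeneralizedboolean} recalled in the definition above). In particular $L_R(S_{(\mathcal B,\mathcal L,\theta,\mathcal I)})$ is $\mathbb F[\mathcal L]$-graded; its degree-$g$ component is, as an $R$-module, an ideal $D_g$ of the coefficient ring $\mathcal A = D_e$; and an element vanishes if and only if each of its homogeneous components vanishes. Consequently $r(\omega,B,\omega)\delta_e\neq 0$ is equivalent to $r(\omega,B,\omega)\neq 0$ in $\mathcal A$, and $r(\alpha,A,\alpha)\delta_\alpha\neq 0$ is equivalent to $r(\alpha,A,\alpha)\neq 0$ in $\mathcal A$. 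So both desired inequalities follow from the single statement: for every $0\neq r\in R$ and every nonzero idempotent $\epsilon\in E$, one has $r\epsilon\neq 0$ in $\mathcal A$.

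To establish that, I would produce an $R$-algebra character $\psi\colon\mathcal A\to R$ with $\psi(\epsilon)=1$, which gives $\psi(r\epsilon)=r\neq 0$ and hence $r\epsilon\neq 0$. The semilattice $E$ of idempotents has least element $0$, so since $\epsilon\neq 0$ the principal filter $\{f\in E : f\geq\epsilon\}$ is proper; enlarging it to a character of the underlying generalized Boolean algebra structure via Stone duality (cf.\ the remark following the definition of a generalized Boolean algebra) yields a $\{0,1\}$-valued map on $E$ that sends $0\mapsto 0$ and $\epsilon\mapsto 1$ and is compatible with the relations defining $\mathcal A$, and this extends $R$-linearly to the required $\psi$. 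Alternatively, this faithfulness of $R$ in the coefficient ring is part of the structure theory of $L_R(S)$ in \cite{zhang2025partialactionsgeneralizedboolean} and could be cited outright.

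I expect the only genuine obstacle to be this last step: to know that the character on $E$ really does descend through all the relations defining the coefficient ring $\mathcal A$ — equivalently, that $R$ embeds faithfully in $\mathcal A$ — one needs a precise hold on the presentation of $\mathcal A$. Everything else (transporting along $\pi$, the grading argument, and the reduction to a single nonzero idempotent) is routine.
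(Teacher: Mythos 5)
Your proposal is correct and follows essentially the same route as the paper: push the elements forward along the (not yet known to be injective) homomorphism to $L_R(S_{(\mathcal B,\mathcal L,\theta,\mathcal I)})$ from Theorem~\ref{theorem:skewiso}, identify the images as $r(\omega,B,\omega)\delta_e$ and $r(\alpha,A,\alpha)\delta_\alpha$, and conclude from their nonvanishing there. The paper simply cites \cite[(11) Lemma 4.15]{zhang2025partialactionsgeneralizedboolean} for that nonvanishing — the shortcut you yourself identify as available — rather than reproving it via the grading and a character of the coefficient algebra.
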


	\begin{proof}
		From Theorem~\ref{theorem:skewiso} (without needing injectivity), there is a map $L_R(\mathcal B, \mathcal L, \theta, \mathcal I) \rightarrow L_R(S_{(\mathcal B, \mathcal L, \theta, \mathcal I)})$. We calculated that \[rp_B \mapsto r (\omega, B, \omega)\delta_e\] \[rs_{\alpha, A} \mapsto r (\alpha, A, \alpha) \delta_{\alpha}\] all of which are non-zero in $L_R(S_{(\mathcal B, \mathcal L, \theta, \mathcal I)})$ by \cite[(11) Lemma 4.15]{zhang2025partialactionsgeneralizedboolean}. Hence, $rp_B$ and $rs_{\alpha, A} \neq 0$.
	\end{proof}

	The follow theorem is analagous to a non-relative version of \cite[Proposition~6.3]{CARLSEN2020124037}. 

	\begin{theorem}
		Let $(\mathcal B, \mathcal L, \theta, \mathcal I)$ be a generalized Boolean dynamical system. Let $(\mathcal B, \mathcal L, \theta, \mathcal B)$ refer to the generalized Boolean dynamical system where all ideals are the generalized Boolean algebra $\mathcal B$. Then, $L_R(\mathcal B, \mathcal L, \theta, \mathcal I)$ and $L_R(\mathcal B, \mathcal L, \theta, \mathcal B)$ are Morita equivalent.
	\end{theorem}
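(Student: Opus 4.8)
The plan is to realize $L_R(\mathcal B, \mathcal L, \theta, \mathcal I)$ as a full hereditary subalgebra with local units of $L_R(\mathcal B, \mathcal L, \theta, \mathcal B)$ and then invoke the standard fact that such a subalgebra of a ring with local units is Morita equivalent to the whole ring. First I would note that, since $\mathcal I_\alpha \subseteq \mathcal B$ for every $\alpha \in \mathcal L$, sending each generator of $L_R(\mathcal B, \mathcal L, \theta, \mathcal I)$ to the like-named generator of $L_R(\mathcal B, \mathcal L, \theta, \mathcal B)$ respects every relation of Definition~\ref{definition:boolrelations}; the only point needing comment is relation~(5), where one uses that $\mathcal B_{\mathrm{reg}}$ depends only on $\theta$ and that $\theta_\alpha(A) \in \mathcal F_\alpha \subseteq \mathcal I_\alpha$, so that $s_{\alpha, \theta_\alpha(A)}$ is a generator on both sides. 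This gives a $\mathbb Z$-graded $R$-algebra homomorphism $\iota$, which is injective by the graded uniqueness theorem (Theorem~\ref{gradednonrel}) together with Corollary~\ref{corollary:nonzero} applied to $(\mathcal B, \mathcal L, \theta, \mathcal B)$, for which $\iota(r p_A) = r p_A \neq 0$. Write $S := L_R(\mathcal B, \mathcal L, \theta, \mathcal B)$ and $\mathcal A := \iota\bigl(L_R(\mathcal B, \mathcal L, \theta, \mathcal I)\bigr) \subseteq S$; both have local units assembled from the idempotents $p_A$ and $s_{\gamma, B} s_{\gamma, B}^{\ast}$ exactly as for labelled Leavitt path algebras in \cite{Boava2021LeavittPA}.

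Fullness of $\mathcal A$ is easy: every spanning monomial $s_{\gamma, D} s_{\delta, D}^{\ast}$ of $S$ equals $s_{\gamma, D}\, p_D\, s_{\delta, D}^{\ast}$ and $p_D \in \mathcal A$, so $S = \sum_{A \in \mathcal B} S p_A S \subseteq S \mathcal A S$, whence $S \mathcal A S = S$.

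The heart of the argument is hereditariness, $\mathcal A S \mathcal A \subseteq \mathcal A$. As $L_R(\mathcal B, \mathcal L, \theta, \mathcal I)$ and $S$ are spanned by monomials $s_{\mu, E} s_{\nu, E}^{\ast}$ whose middle set lies in $\mathcal I_\mu \cap \mathcal I_\nu$, resp.\ in $\mathcal B_\mu \cap \mathcal B_\nu$, it suffices to show $(s_{\alpha, E} s_{\beta, E}^{\ast})\, x\, (s_{\rho, G} s_{\sigma, G}^{\ast}) \in \mathcal A$ for $E \in \mathcal I_\alpha \cap \mathcal I_\beta$, $G \in \mathcal I_\rho \cap \mathcal I_\sigma$ and $x \in S$. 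Putting $y := s_{\beta, E}^{\ast}\, x\, s_{\rho, G} \in S$ and expanding $y$ into spanning monomials $s_{\mu, D} s_{\nu, D}^{\ast}$, one reduces to $s_{\alpha, E}\,(s_{\mu, D} s_{\nu, D}^{\ast})\, s_{\sigma, G}^{\ast}$. By the standard concatenation identities (the analogues of $s_\mu s_\nu = s_{\mu\nu}$, proved as in \cite{Boava2021LeavittPA}) this equals $(s_{\alpha, E} s_{\mu, D})(s_{\nu, D}^{\ast} s_{\sigma, G}^{\ast}) = s_{\alpha\mu, F}\, s_{\sigma\nu, F'}^{\ast}$ with $F = D \cap \theta_\mu(E)$ and $F' = D \cap \theta_\nu(G)$; absorbing $p_{F \cap F'}$ in the middle rewrites it as $s_{\alpha\mu, F \cap F'}\, s_{\sigma\nu, F \cap F'}^{\ast}$. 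The key point is that, from the description of the ideals $\mathcal I_{\alpha\mu}$ attached to words, $F \cap F' \subseteq \theta_\mu(E)$ with $E \in \mathcal I_\alpha$ forces $F \cap F' \in \mathcal I_{\alpha\mu}$, and symmetrically $G \in \mathcal I_\sigma$ forces $F \cap F' \in \mathcal I_{\sigma\nu}$, so the monomial lies in $\mathcal A$. Conceptually this is the statement that the semilattice of idempotents of $S_{(\mathcal B, \mathcal L, \theta, \mathcal I)}$ is an order ideal of that of $S_{(\mathcal B, \mathcal L, \theta, \mathcal B)}$ (so that $e x g \in S_{(\mathcal B, \mathcal L, \theta, \mathcal I)}$ whenever $e, g$ are idempotents of the smaller inverse semigroup), transported through Theorem~\ref{theorem:skewiso}. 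I expect this index-and-ideal bookkeeping to be the main, although routine, obstacle; note that one must keep a generator of $\mathcal A$ on \emph{both} ends, since $\mathcal A$ is a full hereditary subalgebra but not an ideal of $S$.

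Finally, with fullness, hereditariness and local units available, the bimodules ${}_{\mathcal A}(\mathcal A S)_S$ and ${}_S(S \mathcal A)_{\mathcal A}$ implement a Morita equivalence: $\mathcal A S \otimes_S S \mathcal A \cong \mathcal A S \mathcal A = \mathcal A$ by hereditariness and the existence of local units, and $S \mathcal A \otimes_{\mathcal A} \mathcal A S \cong S \mathcal A S = S$ by fullness. Alternatively this last step can be quoted from \cite{zhang2025moritaequivalencesubringsapplications}. Since $\mathcal A \cong L_R(\mathcal B, \mathcal L, \theta, \mathcal I)$, we conclude that $L_R(\mathcal B, \mathcal L, \theta, \mathcal I)$ and $L_R(\mathcal B, \mathcal L, \theta, \mathcal B)$ are Morita equivalent.
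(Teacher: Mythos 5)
Your proposal is correct and is essentially the paper's own argument: the paper realizes $L_R(\mathcal B, \mathcal L, \theta, \mathcal I)$ inside $L_R(\mathcal B, \mathcal L, \theta, \mathcal B)$ via the inverse-semigroup inclusion $S_{(\mathcal B, \mathcal L, \theta, \mathcal I)} \hookrightarrow S_{(\mathcal B, \mathcal L, \theta, \mathcal B)}$ and verifies exactly the conditions you identify --- downward-closedness of the idempotent semilattice, closure of the subalgebra under sandwiching by its idempotents, and fullness (no proper two-sided ideal contains the subalgebra, since it contains every $p_A$) --- before quoting \cite{zhang2025moritaequivalencesubringsapplications} for the resulting Morita equivalence. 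Your direct monomial computation for hereditariness, resting on the observation that $F \cap F' \subseteq \theta_\mu(E)$ with $E \in \mathcal I_\alpha$ forces $F \cap F' \in \mathcal I_{\alpha\mu}$, is precisely the paper's verification of its condition (3), so the two proofs differ only in packaging, not in substance.
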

	\begin{proof}
		To prove this theorem, we use the fact that our inverse semigroup realizes our algebra (Theorem~\ref{theorem:skewiso}) and apply \cite[Theorem~5.8]{zhang2025moritaequivalencesubringsapplications} to derive the Morita equivalence.

		For $\alpha \in \mathcal L^{\ast}$, let $\mathcal B_{\alpha}$ refer to the ideals associated to the system $(\mathcal B, \mathcal L, \theta, \mathcal B)$. Because $\mathcal I_a \subseteq \mathcal B_a = \mathcal B$ for all $a \in \mathcal L$, we find that $\mathcal I_{\alpha} \subseteq \mathcal B_{\alpha}$. Namely, there is an obvious injection \[S_{(\mathcal B, \mathcal L, \theta, \mathcal I)} \hookrightarrow S_{(\mathcal B, \mathcal L, \theta, \mathcal B)}\]\[(\alpha, B, \beta) \rightarrow (\alpha, B, \beta) \text{ for } B \in \mathcal I_{\alpha} \cap \mathcal I_{\beta}\]

		It remains to show that this inverse subsemigroup satisfies the conditions for \cite[Theorem~5.8]{zhang2025moritaequivalencesubringsapplications}. We will show these each separately.

		\begin{enumerate}
			\item Obvious
			\item Let $(\alpha, A, \alpha) \in E_{(\mathcal B, \mathcal L, \theta, \mathcal I)}$ and $(\beta, B, \beta) \in  E_{(\mathcal B, \mathcal L, \theta, \mathcal B)}$. This means that $\alpha, \beta \in \mathcal L^{\ast}$ and $A \in \mathcal I_{\alpha}$ while $B \in \mathcal B_{\alpha}$. Assume that $(\beta, B, \beta) \leq (\alpha, A, \alpha)$ which means that $\beta = \alpha \gamma$ for some $\gamma \in \mathcal L^{\ast}$ and $B \subseteq \theta_{\gamma}(A)$. Because $A \in \mathcal I_{\alpha}$, we know that $\theta_{\gamma}(A) \in \mathcal I_{\alpha\gamma} = \mathcal I_{\beta}$. This implies that $B \in \mathcal I_{\beta}$ so $(\beta, B, \beta) \in E_{(\mathcal B, \mathcal L, \theta, \mathcal I)}$ and thus the meet subsemilattice is closed downwards.
			\item We will show that for any $(\alpha, A, \alpha) \in E_{(\mathcal B, \mathcal L, \theta, \mathcal I)}$ and $(\beta, B, \gamma) \in S_{(\mathcal B, \mathcal L, \theta, \mathcal B)}$, writing $(\delta, C, \delta') = (\alpha, A, \alpha)(\beta, B, \gamma)$ we have that $C \in \mathcal I_{\delta}$ (assuming that everything is non-zero). The full condition is not hard to prove from this, but requires a lot of casework. We will prove our simpler statement with two cases. If $\alpha = \beta \psi$ for some $\psi$, we would have that $\delta = \alpha \psi$ and that $C \subseteq \theta_{\psi}(A) \in \mathcal I_{\alpha\psi} = \mathcal I_{\delta}$, so we are done. In the other case, if $\beta = \alpha \psi$ for some $\psi$, we have that $\delta = \alpha$ and $C \subseteq A \in \mathcal I_{\alpha}$, so again we are done.

			\item It is easy to prove that the projections $\{p_A\}$ generate $L_R(\mathcal B, \mathcal L, \theta, \mathcal B)$ as a two-sided ideal. For any $A \in \mathcal B$, we have that $p_A \in L_R(\mathcal B, \mathcal L, \theta, \mathcal I) \subseteq L_R(\mathcal B, \mathcal L, \theta, \mathcal B)$ so there is no proper two-sided ideal that contains the subalgebra.
		\end{enumerate}
	\end{proof}

	\section{Relative to Non-Relative} \label{relativetonon}
	Throughout this section, fix a relative generalized Boolean algebra $(\mathcal B, \mathcal L, \theta, \mathcal I, \mathcal J)$. A part of our contribution is to extend some results in the non-relative case to the relative case. In \cite{CARLSEN2020124037}, for the case of $C^{\ast}$-algebras, this is done using some results for the representations of $C^{\ast}$-algebras. For the algebraic case, we will take a more direct approach.
	
	Our main tool will be an isomorphism between the $R$-algebra $L_R(\mathcal B, \mathcal L, \theta, \mathcal I, \mathcal J)$ with an $R$-algebra $L_R(\tilde{\mathcal B}, \mathcal L, \tilde{\theta}, \tilde{\mathcal I})$ for a generalized Boolean dynamical system $(\tilde{\mathcal B}, \mathcal L, \tilde{\theta}, \tilde{\mathcal I})$. We will then derive results in the relative case by applying this isomorphism and using results in the non-relative case. The construction of $(\tilde{\mathcal B}, \mathcal L, \tilde{\theta}, \tilde{\mathcal I})$ comes directly from \cite[Proposition~6.4]{CARLSEN2020124037}, which we repeat below.
	
	\[\tilde{\mathcal B} = \{(A, [B]_{\mathcal J}) \colon A, B \in \mathcal B \text{ and } [A]_{\mathcal B_{\text{reg}}} = [B]_{\mathcal B_{\text{reg}}}\}\] \[\tilde{\mathcal I}_{\alpha} = \{(A, [A]_{\mathcal J}) \colon A \in \mathcal I_{\alpha}\}\] \[\tilde{\theta}_{\alpha}(A, [B]_{\mathcal J}) = (\theta_{\alpha}(A), [\tilde{\theta}(A)]_{\mathcal J})\] where operations on $\tilde{\mathcal B}$ are defined pairwise and by choosing arbitrary representations for the second coordinate. It's easy to show that this is a generalized Boolean dynamical system and furthermore that $\tilde{\mathcal B}_{\text{reg}} = \{(A, \emptyset) \colon A \in \mathcal B_{\text{reg}}\}$.
		
	\begin{theorem} \label{theorem:relativeiso}
		\[L_R(\mathcal B, \mathcal L, \theta, \mathcal I, \mathcal J) \cong L_R(\tilde{\mathcal B}, \tilde{\mathcal L}, \theta, \tilde{\mathcal I})\] The isomorphism is realized by the maps $\varphi: L_R(\mathcal B, \mathcal L, \theta, \mathcal I, \mathcal J) \rightarrow L_R(\tilde{\mathcal B}, \mathcal L, \tilde{\theta}, \tilde{\mathcal I})$ and $\psi: L_R(\tilde{\mathcal B}, \mathcal L, \tilde{\theta}, \tilde{\mathcal I}) \rightarrow L_R(\mathcal B, \mathcal L, \theta, \mathcal I, \mathcal J)$ with the following maps on the generators:
		\[\varphi(p_A) = p_{(A, [A]_{\mathcal J})}, \varphi(s_{a, B}) = s_{a, (B, [B]_{\mathcal J})}\]
		\[\psi(p_{(A, [B]_{\mathcal J})}) = p_A + q_C - q_D, \psi(s_{(a, (A, [A]_{\mathcal J}))}) = s_{a, A}\]
		where in the last statement $C, D \in \mathcal B_{\text{reg}}$ are any such sets such that $A \cup C = B \cup D$ and $A \cap C = \emptyset = B \cap D$.
		
	\end{theorem}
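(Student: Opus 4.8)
The plan is to show that the two prescriptions on generators respect the defining relations of Definition~\ref{definition:boolrelations}, so that they extend to $R$-algebra homomorphisms by the universal property of that presentation, and that $\psi\circ\varphi$ and $\varphi\circ\psi$ are the identity on generators. The workhorse is a ``$q$-calculus'' on the relative algebra: using Lemma~\ref{lemma:qcalc} and relations (1)--(5) one checks that for $E,E'\in\mathcal B_{\mathrm{reg}}$, $A\in\mathcal B$, $a\in\mathcal L$, $C\in\mathcal I_a$,
\[
q_\emptyset=0,\quad q_Eq_{E'}=q_{E\cap E'},\quad q_E+q_{E'}-q_{E\cap E'}=q_{E\cup E'},\quad p_Aq_E=q_Ep_A=q_{A\cap E},
\]
\[
q_Es_{a,C}=0=s_{a,C}^{\ast}q_E,\qquad q_W=0\ \text{for}\ W\in\mathcal J,
\]
the last being relation (5) itself; thus $E\mapsto q_E$ behaves like a finitely additive, $\mathcal J$-annihilated ``measure'' on $\mathcal B_{\mathrm{reg}}$ that multiplies by intersection and is killed by the $s$'s. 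I also use freely that $(\tilde{\mathcal B},\mathcal L,\tilde\theta,\tilde{\mathcal I})$ is a generalized Boolean dynamical system with $\tilde{\mathcal B}_{\mathrm{reg}}=\{(W,\emptyset):W\in\mathcal B_{\mathrm{reg}}\}$ (as recalled above, from \cite{CARLSEN2020124037}); in particular, if $(A,[B]_{\mathcal J})\in\tilde{\mathcal B}$ then $A\triangle B\in\mathcal B_{\mathrm{reg}}$, so $A\setminus B,B\setminus A\in\mathcal B_{\mathrm{reg}}$ and the auxiliary $C,D$ of the statement exist (e.g.\ $C=B\setminus A$, $D=A\setminus B$).

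That $\varphi$ respects the relations is straightforward: on $\tilde{\mathcal B}$ the lattice operations and $\tilde\theta$ are governed by the first coordinate via $\theta$, one has $\tilde\Delta_{(A,[A]_{\mathcal J})}=\Delta_A$, and for $A\in\mathcal J$ we have $[A]_{\mathcal J}=[\emptyset]_{\mathcal J}$, so $\varphi(p_A)=p_{(A,\emptyset)}$ with $(A,\emptyset)\in\tilde{\mathcal B}_{\mathrm{reg}}$ --- and relation (5) for $(A,\emptyset)$ is exactly the $\varphi$-image of relation (5) for $A$. For $\psi$ I first check that $\psi(p_{(A,[B]_{\mathcal J})})$ is unambiguous: if $C,D\in\mathcal B_{\mathrm{reg}}$ are as in the statement then an elementary Boolean computation gives $C=(B\setminus A)\sqcup E$, $D=(A\setminus B)\sqcup E$ with $E=C\setminus B=D\setminus A$, so additivity of $q$ yields $q_C-q_D=q_{B\setminus A}-q_{A\setminus B}$; hence $\psi(p_{(A,[B]_{\mathcal J})})=p_A+q_{B\setminus A}-q_{A\setminus B}$ independently of $C,D$. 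If $B\sim_{\mathcal J}B'$ then $(B\setminus A)\triangle(B'\setminus A)\in\mathcal J$, and splitting off the $\mathcal J$-part with $q_W=0$ gives $q_{B\setminus A}=q_{B'\setminus A}$ (likewise for $A\setminus B$), so the value is representative-independent.

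Next I verify that the $\psi$-images satisfy relations (1)--(5) of $L_R(\tilde{\mathcal B},\mathcal L,\tilde\theta,\tilde{\mathcal I})$. Relation (5): for $(W,\emptyset)\in\tilde{\mathcal B}_{\mathrm{reg}}$ both sides equal $\sum_{\alpha\in\Delta_W}s_{\alpha,\theta_\alpha(W)}s_{\alpha,\theta_\alpha(W)}^{\ast}$ by the definition of $q_W$. Relations (2),(3): the $q$-terms are killed by $s_{a,C}$ and $s_{a,C}^{\ast}$, so these reduce to relations (2),(3) on the relative side together with $\psi(p_{\tilde\theta_a(A,[B])})=p_{\theta_a(A)}$. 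Relation (4): since $\tilde{\mathcal I}_a$ is an ideal of $\tilde{\mathcal B}$, the meet $(C,[C]_{\mathcal J})\wedge(A,[B]_{\mathcal J})=(C\cap A,[C\cap B]_{\mathcal J})$ lies in $\tilde{\mathcal I}_a$, which forces $C\cap(A\triangle B)\in\mathcal J$; then $s_{a,C}q_{B\setminus A}=s_{a,C\cap(B\setminus A)}q_{C\cap(B\setminus A)}=0$ and similarly for $A\setminus B$ (and dually for $q_Es_{a,C}^{\ast}$), so relation (4) reduces to its relative counterpart. Relation (1): with $X=(A,[B]_{\mathcal J})$, $X'=(A',[B']_{\mathcal J})$, partition $A\cup B$ and $A'\cup B'$ into their common/left/right atoms and expand $\psi(p_X)\psi(p_{X'})$ using $p_\bullet p_\bullet=p_{\bullet\cap\bullet}$, Lemma~\ref{lemma:qcalc}, and $q_\bullet q_\bullet=q_{\bullet\cap\bullet}$; the $p$-part collapses to $p_{A\cap A'}$ and the $q$-part reorganizes by inclusion--exclusion into $q_{(B\cap B')\setminus(A\cap A')}-q_{(A\cap A')\setminus(B\cap B')}$, i.e.\ $\psi(p_{X\wedge X'})$, and additivity is handled identically.

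Finally, $\psi\circ\varphi=\mathrm{id}$ is immediate: $\psi\varphi(p_A)=p_A+q_\emptyset-q_\emptyset=p_A$ and $\psi\varphi(s_{a,B})=s_{a,B}$. For $\varphi\circ\psi=\mathrm{id}$ it suffices (the $s$-generators being clear) to prove
\[
p_{(A,[A]_{\mathcal J})}+q_{(B\setminus A,\,[B\setminus A]_{\mathcal J})}-q_{(A\setminus B,\,[A\setminus B]_{\mathcal J})}=p_{(A,[B]_{\mathcal J})}
\]
in $L_R(\tilde{\mathcal B},\mathcal L,\tilde\theta,\tilde{\mathcal I})$. This follows by writing each term through the pairwise-disjoint pieces $(A\cap B,[A\cap B]_{\mathcal J})$, $(A\setminus B,\emptyset)$, $(B\setminus A,\emptyset)$, $(\emptyset,[A\setminus B]_{\mathcal J})$, $(\emptyset,[B\setminus A]_{\mathcal J})$, applying relation (5) to the regular elements $(A\setminus B,\emptyset)$ and $(B\setminus A,\emptyset)$ to get $q_{(A\setminus B,[A\setminus B])}=p_{(A\setminus B,[A\setminus B])}-p_{(A\setminus B,\emptyset)}$ (and likewise for $B\setminus A$), and substituting; the left side then collapses to $p_{(A,[B]_{\mathcal J})}$. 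I expect the main obstacle to be relation (1) for $\psi$: keeping precise track of which intersections of the atoms of $A\cup B$ and $A'\cup B'$ contribute, and with which signs, to $q_{(B\cap B')\setminus(A\cap A')}$ and $q_{(A\cap A')\setminus(B\cap B')}$, while ensuring along the way that every $q$-symbol that appears is legitimate (its argument lies in $\mathcal B_{\mathrm{reg}}$) and that $\psi(p_{(A,[B])})$ is genuinely representative-independent.
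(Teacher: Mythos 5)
Your proposal is correct, and it is doing real work that the paper does not: the paper's ``proof'' of Theorem~\ref{theorem:relativeiso} is a one-line deferral to \cite[Proposition~6.4]{CARLSEN2020124037}, where the argument is carried out with $C^{\ast}$-representation techniques, whereas you give a self-contained purely algebraic verification. The overall skeleton (check that both prescriptions respect relations (1)--(5), invoke the universal property twice, then check the two compositions on generators) is the standard one and is surely what the cited proof amounts to, but the substance lies in your $q$-calculus, and the three computations that carry the load all check out: (i) the identities $q_Eq_{E'}=q_{E\cap E'}$, $q_E+q_{E'}-q_{E\cap E'}=q_{E\cup E'}$, $q_Es_{a,C}=0=s^{\ast}_{a,C}q_E$ follow from relations (2)--(4) exactly as you say, and they make $E\mapsto q_E$ finitely additive, which is what renders relation (1) for $\psi$ a purely combinatorial identity --- your ``atoms'' computation is precisely the observation that $q_{B\setminus A}-q_{A\setminus B}$ transforms like $1_B-1_A$, so that the product identity reduces to $(1_B-1_A+1_A)(1_{B'}-1_{A'}+1_{A'})=1_{B\cap B'}$; (ii) the reduction of any admissible $(C,D)$ to $(B\setminus A,\,A\setminus B)$ plus a common $\mathcal J$-irrelevant piece gives well-definedness of $\psi$ cleanly; and (iii) the identity $q_{(E,[E]_{\mathcal J})}=p_{(E,[E]_{\mathcal J})}-p_{(E,[\emptyset]_{\mathcal J})}$, obtained by applying relation (5) to the regular element $(E,[\emptyset]_{\mathcal J})$, is exactly the right mechanism for $\varphi\circ\psi=\mathrm{id}$ on the $p$-generators. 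I verified the final collapse to $p_{(A,[B]_{\mathcal J})}$ and it works.

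One point deserves flagging, not as a gap in your argument but as the place where it leans on the ambient setup. Your verification of relation (4) (and of $s_{a,C}q_{B\setminus A}=0$, which is \emph{not} an instance of $q_Es_{a,C}=0$ --- you correctly treat it separately) hinges on $C\cap(A\triangle B)\in\mathcal J$ for $C\in\mathcal I_a$ and $(A,[B]_{\mathcal J})\in\tilde{\mathcal B}$, which you extract from the assertion that $\tilde{\mathcal I}_a$ is an ideal of $\tilde{\mathcal B}$ containing $\tilde{\mathcal F}_a$. That assertion is made in the paper before the theorem (imported from the $C^{\ast}$ reference) and is the one genuinely delicate interaction between $\mathcal I_a$, $\mathcal B_{\mathrm{reg}}$ and $\mathcal J$ in the whole construction; your proof is exactly as solid as that preliminary claim, so it would be worth stating explicitly that you are using it as a hypothesis rather than re-deriving it.
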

	
	\begin{proof}
		The proof that the resulting algebras are isomorphic is the same as in \cite[Proposition~6.4]{CARLSEN2020124037}, so we omit it for the sake of brevity.
	\end{proof}

	\begin{corollary} \label{corollary:nonzerorel} Let $0 \neq r \in R$ be arbitrary. Then,
		\[rp_B \neq 0 \text{ for any } B \in \mathcal B\]
		\[rs_{\alpha, A} \neq 0 \text{ for any } \alpha \in \mathcal L^{\ast}, \emptyset \neq A \in \mathcal I_{\alpha}\]
		\[rq_B \neq 0 \text{ for any } B \in \mathcal B_{\text{reg}} \setminus \mathcal J\]
	\end{corollary}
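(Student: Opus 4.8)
The plan is to transport the whole statement across the isomorphism $\varphi\colon L_R(\mathcal B,\mathcal L,\theta,\mathcal I,\mathcal J)\to L_R(\tilde{\mathcal B},\mathcal L,\tilde\theta,\tilde{\mathcal I})$ of Theorem~\ref{theorem:relativeiso} and then appeal to the non-relative Corollary~\ref{corollary:nonzero}, applied to the generalized Boolean dynamical system $(\tilde{\mathcal B},\mathcal L,\tilde\theta,\tilde{\mathcal I})$. Since $\varphi$ is a bijective $R$-algebra morphism, it is enough to show that each of $\varphi(rp_B)$, $\varphi(rs_{\alpha,A})$, $\varphi(rq_B)$ is nonzero; as in Corollary~\ref{corollary:nonzero} we read the first line as referring to $\emptyset\neq B\in\mathcal B$.

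For the first two items this is immediate: $\varphi(p_B)=p_{(B,[B]_{\mathcal J})}$, and for a word $\alpha=\alpha_1\cdots\alpha_n$ the element $s_{\alpha,A}$ is the product of single-letter generators recalled in the proof of Theorem~\ref{theorem:skewiso}, so because $\tilde\theta$ operates on the first coordinate one gets $\varphi(s_{\alpha,A})=s_{\alpha,(A,[A]_{\mathcal J})}$. When $B\neq\emptyset$ and $\emptyset\neq A\in\mathcal I_\alpha$ the elements $(B,[B]_{\mathcal J})\in\tilde{\mathcal B}$ and $(A,[A]_{\mathcal J})\in\tilde{\mathcal I}_\alpha$ are nonzero, so Corollary~\ref{corollary:nonzero} yields $r\,p_{(B,[B]_{\mathcal J})}\neq0$ and $r\,s_{\alpha,(A,[A]_{\mathcal J})}\neq0$, hence $rp_B\neq0$ and $rs_{\alpha,A}\neq0$.

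The third item is where the work lies. Fix $B\in\mathcal B_{\text{reg}}\setminus\mathcal J$, so $0<|\Delta_B|<\infty$ and $q_B$ is defined. I would first observe that $\tilde\theta_\alpha(B,[B]_{\mathcal J})=(\theta_\alpha(B),[\theta_\alpha(B)]_{\mathcal J})$ depends only on $B$ and vanishes precisely when $\theta_\alpha(B)=\emptyset$, so $\tilde\Delta_{(B,[B]_{\mathcal J})}=\Delta_B$ and hence $\varphi(q_B)=\tilde q_{(B,[B]_{\mathcal J})}$, the $q$-symbol of the non-relative algebra. Next, since $B\in\mathcal B_{\text{reg}}$, both $(B,[\emptyset]_{\mathcal J})$ and $(\emptyset,[B]_{\mathcal J})$ belong to $\tilde{\mathcal B}$; they are disjoint with join $(B,[B]_{\mathcal J})$, and $\tilde\theta_\alpha(B,[B]_{\mathcal J})=\tilde\theta_\alpha(B,[\emptyset]_{\mathcal J})$ with $\tilde\Delta_{(B,[\emptyset]_{\mathcal J})}=\Delta_B$. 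Using relation~(1) of Definition~\ref{definition:boolrelations} to split $p_{(B,[B]_{\mathcal J})}=p_{(B,[\emptyset]_{\mathcal J})}+p_{(\emptyset,[B]_{\mathcal J})}$ and grouping the $\Delta_B$-sum with the first summand gives
\[\tilde q_{(B,[B]_{\mathcal J})}=\tilde q_{(B,[\emptyset]_{\mathcal J})}+p_{(\emptyset,[B]_{\mathcal J})}.\]
Now $(B,[\emptyset]_{\mathcal J})\in\tilde{\mathcal B}_{\text{reg}}$ and $L_R(\tilde{\mathcal B},\mathcal L,\tilde\theta,\tilde{\mathcal I})$ is non-relative, so relation~(5) forces $\tilde q_{(B,[\emptyset]_{\mathcal J})}=0$; thus $\varphi(q_B)=p_{(\emptyset,[B]_{\mathcal J})}$. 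Finally $B\notin\mathcal J$ means $(\emptyset,[B]_{\mathcal J})$ is not the minimal element of $\tilde{\mathcal B}$, so Corollary~\ref{corollary:nonzero} gives $r\,p_{(\emptyset,[B]_{\mathcal J})}\neq0$, whence $rq_B\neq0$.

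The main obstacle is precisely the chain of identities leading to $\varphi(q_B)=p_{(\emptyset,[B]_{\mathcal J})}$: one has to verify that $\tilde\Delta$ and $\tilde\theta$ at $(B,[B]_{\mathcal J})$ see only the first coordinate, that $(B,[B]_{\mathcal J})=(B,[\emptyset]_{\mathcal J})\vee(\emptyset,[B]_{\mathcal J})$ is a disjoint decomposition inside $\tilde{\mathcal B}$, and that the regular summand $\tilde q_{(B,[\emptyset]_{\mathcal J})}$ is killed by relation~(5) of the non-relative algebra. Once these are in hand the corollary is an immediate consequence of Theorem~\ref{theorem:relativeiso} together with Corollary~\ref{corollary:nonzero}.
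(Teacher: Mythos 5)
Your proposal is correct and follows essentially the same route as the paper: transport everything across the isomorphism of Theorem~\ref{theorem:relativeiso} and invoke Corollary~\ref{corollary:nonzero} for the non-relative system $(\tilde{\mathcal B},\mathcal L,\tilde\theta,\tilde{\mathcal I})$. The only cosmetic difference is in the third item, where the paper reads off $\psi(p_{(\emptyset,[B]_{\mathcal J})})=q_B$ directly from the defining formula for $\psi$ with $C=B$, $D=\emptyset$, while you rederive the equivalent identity $\varphi(q_B)=p_{(\emptyset,[B]_{\mathcal J})}$ by decomposing $(B,[B]_{\mathcal J})$ and applying relation~(5) in the non-relative algebra.
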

	\begin{proof}
		The first two statements follow from applying the isomorphism $\varphi$ in Theorem~\ref{theorem:relativeiso} and using Corollary~\ref{corollary:nonzero}. To see the third statement, let $B \in \mathcal B_{\text{reg}} \setminus \mathcal J$ and note that $(\emptyset, [B]_{\mathcal J}) \neq \emptyset$ in $\tilde{\mathcal B}$ so $p_{(\emptyset, [B]_{\mathcal J})} \neq 0$. Using $C = B$ and $D = \emptyset$, we calculate that $\psi(p_{(\emptyset, [B]_{\mathcal J})}) = q_B$, which is not equal to $0$ because $\psi$ is an isomorphism.
	\end{proof}

	\section{Cuntz-Pimsner Algebras and Graded Uniqueness Theorems} \label{section:gradedunique}
	To prove that the generalized Boolean Dynamical systems algebras $L_R(\mathcal B, \mathcal L, \theta, \mathcal I)$ are also Cuntz-Pimsner algebras, we modify the proof in \cite[Section~5]{Boava2021LeavittPA}. We then apply a graded uniqueness theorem for Cuntz-Pimsner algebras to derive one for generalized Boolean dynamical system algebras. Finally, using the isomorphism from Theorem~\ref{theorem:relativeiso}, we derive a graded uniqueness theorem for relative generalized Boolean dynamical systems.
	
	For ease, we restate the process from \cite[Section~5]{Boava2021LeavittPA}. Our goal will be to use \cite[Theorem~3.1]{CLARK201982} to realize a generalized Boolean dynamical system algebra as a Cuntz-Pimsner algebra and then apply the graded uniqueness theorem for Cuntz-Pimsner algebras from \cite[Corollary~5.4]{Carlsen2008AlgebraicCR}.
	
	The relevant definitions are found in \cite{Carlsen2008AlgebraicCR, CLARK201982}. We now set up some notation from \cite[Theorem~3.1]{CLARK201982}. Let $S$ be a ring, $M$ a left $S$-module, and $I$ a subset of $M$. The \textit{left annihalator} of $I$ by $S$ defined by \[\mathrm{Ann}_S(I) = \{r \in S \colon rx = 0 \text { for all } x \in I\}\] is a left ideal of $S$. If $I$ is a sub-module of $M$, then $\mathrm{Ann}_S(I)$ is a two-sided ideal of $S$. Let $J$ be a two-sided ideal of a ring $S$. We define \[J^{\bot} = \{r \in S \colon ry = yr = 0 \text{ for all } y \in J\}\]
	
\begin{theorem}\cite[Theorem~3.1]{CLARK201982} \label{ssystem}
	Let $A=\bigoplus_{i\in \mathbb Z}A_i$ be a $\mathbb Z$-graded ring, $S$ a subring of $A_0$, and $I\subseteq A_1$ and  $J\subseteq  A_{-1}$ additive subgroups such that:
	\begin{enumerate}
		\item $SI, IS \subseteq I$, $SJ, JS \subseteq J$ and 
		$JI\subseteq S$;
		
		\medskip
		
		\item For any finite subset $\{i_1,\dots,i_n\}\subseteq I$ there is an element $a$ in $IJ$ such that $a i_l=i_l$ for each $1\leq l \leq n$, and for 
		any finite subset $\{j_1,\dots,j_m\}\subseteq J$ there is an element $b$ in $IJ$ such that $j_l b=j_l$ for each $1\leq l \leq m$;
		
		\medskip
		
		\item For $x\in \mathrm{Ann}_S(I)^\bot$ and $a\in IJ$, if $x-a \in \mathrm{Ann}_{A_0}(I)$, then $a\in S$;
		
		\medskip 
		
		\item $\mathrm{Ann}_S(I)\cap \mathrm{Ann}_S(I)^\bot=\{0\}$.
	\end{enumerate}

	Then there exists an $S$-bimodule homomorphism $\psi:J\otimes_S I \rightarrow S$ such that $\psi(j\otimes_S i)=ji$ for each $j\in J, i\in I$, and $(J,I,\psi)$ is an $S$-system. Furthermore, there is a graded isomorphism from the Cuntz-Pimsner algebra $\mathcal O_{(J,I,\psi)}$ of the $S$-system $(J,I,\psi)$ to the subring of $A$ generated by $S,I,J$. 
	\end{theorem}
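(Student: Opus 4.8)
The plan is to recognize the subring $\langle S,I,J\rangle$ of $A$ as a Cuntz--Pimsner ring: one exhibits the inclusions of $S$, $I$, $J$ into $A$ as a Cuntz--Pimsner covariant representation of the system $(J,I,\psi)$, invokes the universal property of $\mathcal O_{(J,I,\psi)}$ to get a surjection onto $\langle S,I,J\rangle$, and then uses a graded uniqueness theorem to see it is an isomorphism. Concretely I would: (i) verify that $(J,I,\psi)$ is an $S$-system in the sense of \cite{Carlsen2008AlgebraicCR}; (ii) build from the inclusions a graded homomorphism $\Phi\colon \mathcal O_{(J,I,\psi)}\to A$ with image $\langle S,I,J\rangle$; (iii) show $\Phi$ is injective via the graded (gauge-invariant) uniqueness theorem for algebraic Cuntz--Pimsner rings.

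For (i): the pairing $\psi(j\otimes_S i):=ji$ is well defined because $JI\subseteq S$ by (1) and because $js\otimes_S i$ and $j\otimes_S si$ have equal images by associativity in $A$; associativity also makes $\psi$ an $S$-bimodule homomorphism. The remaining axioms of an $S$-system are the module compatibilities $SI,IS\subseteq I$, $SJ,JS\subseteq J$ of (1) together with the nondegeneracy and ``enough local units'' requirements, which come from (2): the elements of $IJ$ produced there act as one-sided identities on prescribed finite tuples in $I$ and in $J$, so $IJ$ is a ring with local units acting on $I$ and on $J$ and plays the role of the finite-rank operators on the module.

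For (ii): let $\sigma\colon S\hookrightarrow A_0$, $\tau_I\colon I\hookrightarrow A_1$, $\tau_J\colon J\hookrightarrow A_{-1}$ be the inclusions. The bimodule identities and the Toeplitz relation $\tau_J(j)\tau_I(i)=\sigma\big(\psi(j\otimes_S i)\big)$ are immediate from (1). The substantive point is \emph{Cuntz--Pimsner covariance}: on the distinguished ideal of $S$ over which covariance is imposed in $\mathcal O_{(J,I,\psi)}$ — which the notation of the theorem identifies as the Katsura-type ideal governed by $\mathrm{Ann}_S(I)$ — the left action of an $x\in S$ on $I$ must be implemented by an element of $IJ$ that already equals $x$ in $A_0$. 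This is exactly what (3) supplies: if $x\in\mathrm{Ann}_S(I)^\bot$ and $a\in IJ$ with $x-a\in\mathrm{Ann}_{A_0}(I)$ (i.e. $x$ and $a$ act identically on $I$), then $a\in S$, whence $x-a\in\mathrm{Ann}_S(I)$, and (4) forces $x=a$. With covariance established, the universal property of $\mathcal O_{(J,I,\psi)}$ yields $\Phi$; since $\sigma$, $\tau_I$, $\tau_J$ are degree preserving for the natural $\mathbb Z$-grading of $\mathcal O_{(J,I,\psi)}$ (with $S$, $I$, $J$ in degrees $0$, $1$, $-1$), $\Phi$ is graded, and its image contains $S\cup I\cup J$, hence equals $\langle S,I,J\rangle$.

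For (iii): $\Phi$ restricts to the inclusion $S\hookrightarrow A$, so it is injective on the coefficient ring; being graded, $\Phi$ is then injective by the graded uniqueness theorem for algebraic Cuntz--Pimsner rings (cf. \cite[Corollary~5.4]{Carlsen2008AlgebraicCR}), provided the covariance ideal used in forming $\mathcal O_{(J,I,\psi)}$ is the correct maximal one so that no further hypotheses are needed — and this is guaranteed by (4), the condition $\mathrm{Ann}_S(I)\cap\mathrm{Ann}_S(I)^\bot=\{0\}$ being the algebraic analog of the Katsura ideal being the largest ideal on which the representation is injective. The main obstacle is step (ii): pinning down precisely which ideal of $S$ the Cuntz--Pimsner relations are imposed on in $\mathcal O_{(J,I,\psi)}$ and checking that the inclusion representation satisfies that exact relation; this is where conditions (3) and (4) must be used in precisely the stated form, since weakening (3) breaks covariance of the inclusion while weakening (4) leaves $\Phi$ with a nontrivial graded kernel. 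The grading-convention bookkeeping in (ii)--(iii), matching the $\mathbb Z$-grading of $\mathcal O_{(J,I,\psi)}$ with that of $A$, is routine but should be handled carefully.
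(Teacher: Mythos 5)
This theorem is not proved in the paper at all: it is quoted verbatim from \cite[Theorem~3.1]{CLARK201982} and used as a black box, so there is no in-paper argument to compare yours against. That said, your outline --- view the inclusions $S\hookrightarrow A_0$, $I\hookrightarrow A_1$, $J\hookrightarrow A_{-1}$ as a covariant representation of the $S$-system $(J,I,\psi)$, obtain a graded homomorphism $\Phi\colon\mathcal O_{(J,I,\psi)}\to A$ with image the subring generated by $S,I,J$ from the universal property, and conclude injectivity from the graded uniqueness theorem of \cite[Corollary~5.4]{Carlsen2008AlgebraicCR} --- is exactly the strategy of the cited source, and your step (i) together with the Toeplitz part of (ii) does follow routinely from conditions (1) and (2) as you say.

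The genuine gap is in the covariance step. You argue: if $x\in\mathrm{Ann}_S(I)^\bot$ and $a\in IJ$ act identically on $I$, then (3) gives $a\in S$, hence $x-a\in\mathrm{Ann}_S(I)$, ``and (4) forces $x=a$.'' Condition (4) only kills elements of $\mathrm{Ann}_S(I)\cap\mathrm{Ann}_S(I)^\bot$, so you additionally need $x-a\in\mathrm{Ann}_S(I)^\bot$, i.e.\ $a\in\mathrm{Ann}_S(I)^\bot$. Writing $a=\sum_k i_kj_k$, the vanishing $ya=\sum_k(yi_k)j_k=0$ for $y\in\mathrm{Ann}_S(I)$ is free, but the other vanishing $ay=\sum_k i_k(j_ky)=0$ does not follow from anything you have stated, and without it the appeal to (4) is unjustified. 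More fundamentally, the proposal never pins down which ideal of $S$ the Cuntz--Pimsner relation defining $\mathcal O_{(J,I,\psi)}$ is imposed on in the Carlsen--Ortega framework, nor verifies the precise hypotheses of their graded uniqueness theorem for that choice; you flag this yourself as ``the main obstacle,'' and since the entire content of conditions (3)--(4) is to make that matching work, what you have is a correct plan with the decisive verification still outstanding rather than a proof.
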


Our proof of the below theorem mirrors the proof of \cite[Theorem~5.3]{Boava2021LeavittPA}

\begin{theorem} \label{cuntzrealize}
	Let $(\mathcal B, \mathcal L, \theta, \mathcal I)$ be a generalized Boolean dynamical system and $R$ a unital commutative ring. Then there exists a subring $S \subseteq L_R(\mathcal B, \mathcal L, \theta, \mathcal I)$ and an $S$-system $(J, I, \psi)$ such that $L_R(\mathcal B, \mathcal L, \theta, \mathcal I)$ is graded isomorphic to the Cuntz-Pimsner algebra $\mathcal O_{(J, I, \psi)}$.
\end{theorem}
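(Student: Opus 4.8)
The plan is to apply Theorem~\ref{ssystem} (i.e. \cite[Theorem~3.1]{CLARK201982}) to the ring $A = L_R(\mathcal B, \mathcal L, \theta, \mathcal I)$ with the $\mathbb Z$-grading of Theorem~\ref{theorem:zgrading}, following the template of \cite[Theorem~5.3]{Boava2021LeavittPA}. I would take
\[
S = \mathrm{span}_R\{p_A \colon A \in \mathcal B\} \subseteq A_0, \qquad
I = \mathrm{span}_R\{s_{a,A} \colon a \in \mathcal L,\ A \in \mathcal I_a\} \subseteq A_1, \qquad
J = \mathrm{span}_R\{s_{a,A}^{\ast} \colon a \in \mathcal L,\ A \in \mathcal I_a\} \subseteq A_{-1}.
\]
By relation~(1), $S$ is a commutative subring of $A_0$, and since the generators $p_A, s_{a,A}, s_{a,A}^{\ast}$ all lie in $S \cup I \cup J$, the subring generated by $S, I, J$ is all of $A$. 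So it suffices to verify conditions (1)--(4) of Theorem~\ref{ssystem}; once they hold, the theorem hands us the $S$-bimodule map $\psi(j \otimes_S i) = ji$, the $S$-system $(J,I,\psi)$, and a graded isomorphism $\mathcal O_{(J,I,\psi)} \cong A$.

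\textbf{Conditions (1) and (2).} Here $SI, IS \subseteq I$ and $SJ, JS \subseteq J$ follow from relations~(2) and~(4) (e.g. $p_B s_{a,A} = s_{a,A} p_{\theta_a(B)} = s_{a,A \cap \theta_a(B)}$ and $s_{a,A} p_B = s_{a, A \cap B}$), and $JI \subseteq S$ is exactly relation~(3). For the local-unit condition~(2), one first checks the identity $s_{a,A} s_{a,A}^{\ast} s_{a,B} s_{a,B}^{\ast} = s_{a, A \cap B} s_{a, A \cap B}^{\ast}$ and, for distinct letters $a \neq b$, $s_{a,A} s_{a,A}^{\ast} s_{b,B} s_{b,B}^{\ast} = 0$, both consequences of relations~(3) and~(4). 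Given $i_1, \dots, i_n \in I$, expand each as a combination of terms $s_{a,A}$, let $a_1, \dots, a_m$ be the letters that occur, let $B_j$ be the union of the sets paired with $a_j$, and set $a = \sum_{j=1}^m s_{a_j, B_j} s_{a_j, B_j}^{\ast} \in IJ$. Orthogonality across distinct letters makes $a$ idempotent, and $s_{a_j, B_j} s_{a_j, B_j}^{\ast} s_{a_j, A} = s_{a_j, A}$ whenever $A \subseteq B_j$, so $a\, i_l = i_l$ for all $l$; the $J$-side is symmetric via $s_{a,A}^{\ast} s_{a,B} s_{a,B}^{\ast} = s_{a, A \cap B}^{\ast}$.

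\textbf{Condition (4).} The key computation is $\mathrm{Ann}_S(I) = \mathrm{span}_R\{p_C \colon C \in \mathcal B_{\text{sink}}\}$: given $x \in \mathrm{Ann}_S(I)$, use Lemma~\ref{lemma:disjointsplit} to write $x = \sum_i s_i p_{C_i}$ with the $C_i$ pairwise disjoint and $s_i \neq 0$; testing against $s_{a, \theta_a(C_j)}$ and using that $\theta_a$ preserves meets gives $x\, s_{a, \theta_a(C_j)} = s_j\, s_{a, \theta_a(C_j)}$, which by Corollary~\ref{corollary:nonzero} forces $\theta_a(C_j) = \emptyset$ for every $a \in \mathcal L$, i.e. $|\Delta_{C_j}| = 0$; the reverse inclusion is immediate. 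Since each such $p_C$ lies in $\mathrm{Ann}_S(I)$, any $z \in \mathrm{Ann}_S(I) \cap \mathrm{Ann}_S(I)^{\bot}$ satisfies $z\, p_{C_i} = 0$ for each sink $C_i$ in its disjointified expansion; disjointness gives $s_i p_{C_i} = 0$, and Corollary~\ref{corollary:nonzero} gives $s_i = 0$, so $z = 0$.

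\textbf{Condition (3) --- the main obstacle.} Given $x \in \mathrm{Ann}_S(I)^{\bot}$ and $a \in IJ$ with $x - a \in \mathrm{Ann}_{A_0}(I)$, one must show $a \in S$, equivalently $x - a \in \mathrm{Ann}_S(I)$. I would expand $a$ over the spanning set $\{s_{b,B} s_{c,C}^{\ast} \colon b, c \in \mathcal L\}$ of $IJ$, split off the diagonal ($b = c$) part, and reduce each summand to normal form using the relations and the additivity $s_{b, B \cup B'} = s_{b,B} + s_{b,B'} - s_{b, B \cap B'}$. The relations together with Corollary~\ref{corollary:nonzero} yield linear independence of the resulting normal-form elements (disjointification within a fixed letter, then applying $s_{b,\cdot}^{\ast}$ to isolate each letter, then Corollary~\ref{corollary:nonzero}); this forces the contributions of $a$ that are genuinely off-diagonal or not already of the form $\sum_i s_i p_{C_i}$ to be annihilated, since they cannot be killed by right multiplication by all of $I$ otherwise. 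The hypothesis $x \in \mathrm{Ann}_S(I)^{\bot}$ is then precisely what excludes a leftover sink-supported discrepancy between $x$ and the diagonal remnant of $a$, leaving $a \in S$. This step carries essentially all the technical weight and is the one that must be adapted carefully from \cite[Theorem~5.3]{Boava2021LeavittPA}; note that it only invokes Corollary~\ref{corollary:nonzero} (which, as noted after Theorem~\ref{theorem:skewiso}, does not depend on the graded uniqueness theorem), so no circularity arises. With (1)--(4) verified, Theorem~\ref{ssystem} produces the desired $S$-system $(J,I,\psi)$ and the graded isomorphism $L_R(\mathcal B, \mathcal L, \theta, \mathcal I) \cong \mathcal O_{(J,I,\psi)}$.
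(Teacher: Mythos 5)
Your setup coincides exactly with the paper's: the same $S$, $I$, $J$, the same appeal to Theorem~\ref{ssystem}, and essentially the same verifications of Conditions (1), (2) and (4), including the identification $\mathrm{Ann}_S(I) = \mathrm{span}_R\{p_C \colon C \in \mathcal B_{\text{sink}}\}$. The problem is Condition (3), which you correctly flag as carrying all the weight but whose sketch rests on a mechanism that does not work. You assert that the contributions of $a$ that are off-diagonal \emph{or not already of the form} $\sum_i s_i p_{C_i}$ are ``annihilated, since they cannot be killed by right multiplication by all of $I$ otherwise.'' That is false for the diagonal part: a term $s_{a,X}s^{\ast}_{a,X}$ acts nontrivially on $I$ (for instance $s_{a,X}s^{\ast}_{a,X}s_{a,X} = s_{a,X} \neq 0$), and nothing forces it to vanish. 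The hypothesis $x - a \in \mathrm{Ann}_{A_0}(I)$ only says that $a$ and $x$ act identically on $I$; one must then deduce that $a$ actually equals an element of $S$, and the \emph{only} mechanism by which an element of $IJ$ can lie in $S$ is relation (5) of Definition~\ref{definition:boolrelations}, which your sketch never invokes.

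Concretely, the missing chain is: writing $x = \sum_l \lambda_l p_{A_l}$ with the $A_l$ pairwise disjoint, one shows each $A_l$ is \emph{regular} --- finiteness of $\Delta_{A_l}$ because every $c \in \Delta_{A_l}$ must occur among the finitely many letters appearing in the normal form of $a$ (sandwiching $x-a$ between $s^{\ast}_{c,\theta_c(A_l)}$ and $s_{c,\theta_c(A_l)}$), and $0 < |\Delta_B|$ for all $B \subseteq A_l$ from $x \in \mathrm{Ann}_S(I)^{\perp}$. Only then can relation (5) be applied to rewrite $x$ itself as an element of $IJ$, after which a term-by-term comparison (the paper's Claims 6 and 7, which match the coefficients $\lambda_l p_{\theta_{a_k}(A_l)}$ against $\sum_j \mu_j p_{\theta_{a_k}(A_l)\cap B_j}$ and show $B_k = \bigsqcup_i \theta_{a_k}(A_i)\cap B_k$) yields $x = a$, hence $a \in S$. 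Your final sentence gestures at the no-sink half of the regularity argument but omits the finiteness half and, more importantly, omits relation (5) entirely, so as written the argument does not close. Everything else in the proposal is sound and matches the paper.
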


\begin{proof}
	
	We give $L_R(\mathcal B, \mathcal L, \theta, \mathcal I)_0$ the $\mathbb Z$-grading from Theorem~\ref{theorem:zgrading}. Let $S = \text{span}_R \{p_A \colon A \in \mathcal B\}$, $I = \text{span}_R \{s_{a, A} \colon a \in \mathcal L, A \in \mathcal I_a\}$ and $J = \text{span}_R\{s^{\ast}_{a, A} \colon a \in \mathcal L, A \in \mathcal I_a\}$. It's clear that $S \subseteq L_R(\mathcal B, \mathcal L, \theta, \mathcal I)$ is a subring and is contained in the homogenous component $L_R(\mathcal B, \mathcal L, \theta, \mathcal I)_0$. We now verify the conditions of Theorem~\ref{ssystem}.
	
	Condition (1) is easy from the relations on our algebra.
	
	To prove Condition (2), let $\{i_1, \ldots, i_n\} \subseteq I$. We write $i_l = \sum_{j=1}^{n_l} \lambda^l_js_{a^l_j, A^l_j}$ where $A_l^j \in \mathcal I_{a_l^j}$ and $\lambda^l_j \in R$. Let $F = \bigcup_{l=1}^n \{a^l_1, \ldots, a^l_{n_l}\}$ and for $a \in F$ let $A_b = \bigcup_{l=1}^n \bigcup_{i=1}^{n_l} \delta_{a_l^j, b} A^l_j$.  It's clear that $A_b \in \mathcal I_b$. Now take $a = \sum_{b \in F} s_{b, A_b} s_{b, A_b}^{\ast} \in IJ$ and it's not difficult to see that $ai_l = i_l$, so we are done. The second part of Condition (2) follows analagously.
	
	It remains to prove Conditions (3) and (4). To do this, we set up a series of claims.
	
	{\bf Claim 1.}	We have that \[\text{Ann}_S(I) = \text{span}_R\{p_A \colon A \in \mathcal B_{\text{sink}}\}\]
	
	\begin{proof}
		Consider some element $x \in I$ and let $x = \sum_{i=1}^n \gamma_i s_{a_i, A_i}$.
		
		First let $s \in \text{span}_R\{p_A \colon A \in \mathcal B_{\text{sink}}\}$ with $s = \sum_{j=1}^m \lambda_j p_{A_j}$ for $\lambda_j \in R$ and $A_j \in \mathcal B_{\text{sink}}$. Consider one term of $sx$ which looks like $\lambda_j p_{A_j} \gamma_i s_{a_i, A_i} = \lambda_j \gamma_i s_{a_i, A_i} p_{\theta_{a_i}(A_j)}$. Because $A_j \in \mathcal B_{\text{sink}}$, we have that $\theta_{a_i}(A_j) = \emptyset$ so each term is equal to $0$ and hence $s \in \text{Ann}_S(I)$ so $\text{span}_R\{p_A \colon A \in \mathcal B_{\text{sink}}\} \subseteq \text{Ann}_S(I)$.
		
		To see the converse, let $s =  \sum_{j=1}^m \lambda_j p_{A_j}\in S$ with $\lambda_j \in R$ and $A_j \in \mathcal B$ be an annihalator of $I$. By Lemma~\ref{lemma:disjointsplit}, we assume that each $\{A_j\}_{j=1}^m$ is pairwise disjoint. Assume that there is some $k \in [1, m]$ where $A_k$ is not a sink, so there exists $a \in \mathcal L$ such that $\theta_a(A_k) \neq \emptyset$. Consider $s_{a, \theta_a(A_k)} \in I$. Because $S$ annihalates $I$, we have that $0 = s s_{a, \theta_a(A_k)} = \sum_{j=1}^m \lambda_j p_{A_j} s_{a, \theta_a(A_k)} = \sum_{j=1}^m \lambda_j s_{a, \theta_a(A_k) \cap \theta_a(A_j)} = \sum_{j=1}^m \lambda_j s_{a, \theta_a(A_k \cap A_j)} = \lambda_k s_{a, \theta_a(A_k)}$ where the last equality follows by the pairwise disjointness of $\{A_j\}_{j=1}^m$. Because $\theta_a(A_k) \neq \emptyset$, by Corollary~\ref{corollary:nonzero} this is not $0$, so we have a contradiction.
	\end{proof}

	{\bf Claim 2.} We have that \[\text{Ann}_S(I)^{\perp} = \text{span}_R \{p_A \colon A \in \mathcal B \text{ such that } \forall B \in \mathcal B_{\text{sink}} \text{ we have that } A \cap B = \emptyset\}\]
	\begin{proof}
		Note that if any $A$ satisfies that $A \cap B = \emptyset$ for any $B \in \mathcal B_{\text{sink}}$, then it's clear that by Claim 1 we have that $p_A s = 0$ for any $s \in \text{Ann}_S(I)$. Thus is obviously preserved by the $R$-span, so we find that $\text{Ann}_S(I)^{\perp} \supseteq \text{span}_R \{p_A \colon A \in \mathcal B \text{ such that } \forall B \in \mathcal B_{\text{sink}} \text{ we have that } A \cap B = \emptyset\}$ so we are left with the reverse direction.
		
		Let $s \in \text{Ann}_S(I)^{\perp}$. We write $s = \sum_{j=1}^m \lambda_j p_{A_j}$ for pairwise disjoint $A_j$. Assume that there is some $k \in [1, m]$ such that $A_k$ does not satisfy our required condition. Then there exists some $B \in \mathcal B_{\text{sink}}$ such that $A_k \cap B \neq \emptyset$. Without loss of generality, assume that $\emptyset \neq B \subseteq A_k$ (otherwise take $B \coloneqq B \cap A$ which works by our definition of sinks). Then consider $p_B \in \text{Ann}_S(I)$ and note $0 = sp_B$ because $s$ is in the complement of the annihalator. However, we also calculate that $sp_B = \lambda_k p_B$ by $B \subseteq A$ and the pairwise disjointness of $A_i$. But this is a contradiction because this is $0$ only when $B$ is the emptyset, and it is not the empty by construction, so we are done.
		
	\end{proof}
	
	{\bf Claim 3.} Let $a, b \in \mathcal L$ with $a \neq b$. Take any $A \in \mathcal I_a, B \in \mathcal I_b$ and $p \in S$, then $(s^{\ast}_{a, A} )p (s_{b, B}) = 0$. 

	\begin{proof}
		We have that $p = \sum_{j=1}^m \lambda_j p_{A_j}$. Looking again term by term, we end up with $s^{\ast}_{a, A} s_{b, B} p_{\theta_b(A_j)}$ for each term. The first part of this is $0$ because $a \neq b$, so all terms are $0$.
	\end{proof}

	{\bf Claim 4. } Let $a \in IJ$. We write $a = \sum_{i=1}^n \mu_i s_{a_i, X_i} s^{\ast}_{b_i, X_i}$ where no elements are $0$, $X_i \in \mathcal I_{a_i} \cap \mathcal I_{b_i}$ and if $a_j = a_k$ and $b_j = b_k$ we have that $X_j \cap X_k = \emptyset$.

	\begin{proof}
		We first prove that any element in $IJ$ has the given representation $\sum_{i=1}^n \mu_i s_{a_i, X_i} s^{\ast}_{b_i, X_i}$ and enforce the disjointness condition later.
		
		It suffices to prove the first part for the product of two elements in $a_1 \in I$ and $a_2 \in J$. A single term in $a_1a_2$ looks like $r s_{a, A} s^{\ast}_{b, B}$ for some $r \in R$, $A \in \mathcal I_{\alpha}$, and $B \in \mathcal I_{b}$. Recall that $s_{a, A \cap B} = s_{a, A} p_B$ and $s^{\ast}_{b, A \cap B} = p_B s^{\ast}_{b, A}$. Thus, $r s_{a, A} s^{\ast}_{b, B} = r s_{a, A} p_A p_B s^{\ast}_{b, B} = r s_{a, A \cap B} s^{\ast}_{b, A \cap B}$ which gives us our $X_i$.
		
		We now fix $a, b \in \mathcal L$ and consider individual sums of the form $\sum_{i=1}^n \lambda_i s_{a, X_i} s^{\ast}_{b, X_i}$. Let $X = \bigcup_{i=1}^n X_i$. We write our sum as $\sum_{i=1}^n s_{a, X_i} s^{\ast}_{b, X_i} = \sum_{i=1}^n s_{a, X} p_{X_i} s^{\ast}_{a, X} = s_{a, X} \left( \sum_{i=1}^n \lambda_i p_{X_i} \right) s^{\ast}_{a, X}$. We again use Lemma~\ref{lemma:disjointsplit} to make $\sum_{i=1} \lambda_i p_{X_i} = \sum_{j=1}^m \mu_j X'_j$ for disjoint $\{X'_j\}_{j=1}^m$. Multiplying back inside and taking $X_j \coloneqq X'_j$, we have our desired representation. 
	\end{proof}
	
	{\bf Claim 5. } Suppose that $x-a \in \text{Ann}_{L_R(\mathcal B, \mathcal L, \theta, \mathcal I)_0}(I)$ where $x \in \text{Ann}_S(I)^{\perp}$ and $a \in IJ$ with $a=\sum_{i=1}^n \mu_i s_{a_i, X_i} s^{\ast}_{b_i, X_i}$ as in Claim 4, then $a_i = b_i$ for all $i$.

	\begin{proof}
		Suppose for the sake of contradiction that there is some $k \in [1, n]$ where $a_k \neq b_k$. Consider the term $s^{\ast}_{a_k, X_k} (x-a) s_{b_k, X_k}$. Because $x-a$ annihalates $I$ and $s_{b_k, X_k} \in I$, we have that this entire term is $0$.
		
		Furthermore, note that by combining Claim 2 and Claim 3 and using the fact that $a_k \neq b_k$, we have that $s^{\ast}_{a_k, X_k} x s_{b_k, X_k} = 0$ because $x \in S$. Hence, we conclude that $s^{\ast}_{a_k, X_k} a s_{b_k, X_k} = 0$. We calculate $s^{\ast}_{a_k, X_k} a s_{b_k, X_k}$ to also be equal to \[\sum\limits_{i \in [1, n] \colon a_i = a_k, b_i = b_k} \mu_i p_{X_i \cap X_k} = \mu_k p_{X_k}\] by the assumption that any terms that match $a_j, b_j$ have disjoint $X_j$. However, by Corollary~\ref{corollary:nonzero} this is not $0$, which is a contradiction.
	\end{proof}

	Let $a \in IJ$ and $x \in \text{Ann}_S(I)^{\perp}$ such that $x-a \in \text{Ann}_{L_R(\mathcal B, \mathcal L, \theta, \mathcal I)_0}(I)$. By Claim 2, we can write \[x = \sum_{i=1}^m \lambda_i p_{A_i}\] where $A_i \in \mathcal B$ are pairwise disjoint with trivial intersection with all sets in $\mathcal B_{\text{sink}}$ and $\lambda_i \neq 0$. By Claim 5, we can write \[a = \sum_{i=1}^n \mu_i s_{a_i, B_i} s_{a_i, B_i}^{\ast}\] for $\emptyset \neq B_i \in \mathcal I_{a_i}$ and pairwise disjoint. Fix these representions of $a$ and $x$.
	
	{\bf Claim 6. }	Fix $l \in [1, m], k \in [1, n]$. If $\theta_{a_k}(A_l) \neq \emptyset$, then,
		
		\[\lambda_l p_{\theta_{a_k}(A_l)} = \sum\limits_{j \in [1, n] \colon a_j = a_k, \theta_{a_k}(A_l) \cap B_j \neq \emptyset} \mu_j p_{\theta_{a_k}(A_l) \cap B_j}\]      
	\begin{proof}
		Similar to the last claim, we have that \[s^{\ast}_{a_k, \theta_{a_k}(A_l)} (x-a) s_{a_k, \theta_{a_k}(A_l)}p_{\theta_{a_k}(A_l)} = 0 \Rightarrow s^{\ast}_{a_k, \theta_{a_k}(A_l)} xs_{a_k, \theta_{a_k}(A_l)} p_{\theta_{a_k}(A_l)} = s^{\ast}_{a_k, \theta_{a_k}(A_l)} a s_{a_k, A_l} p_{\theta_{a_k}(A_l)}\]
		
		If we expand the left-hand side, we find that it is equal to \[\sum_{j=1}^m \lambda_j s^{\ast}_{a_k, \theta_{a_k}(A_l)} p_{A_j} s_{a_k, \theta_{a_k}(A_l)} p_{\theta_{a_k}(A_l)} =  \sum_{j=1}^m \lambda_j s^{\ast}_{a_k, \theta_{a_k}(A_l)} s_{a_k, \theta_{a_k}(A_l)} p_{\theta_{a_k}(A_j)}p_{\theta_{a_k}(A_l)}\] \[=\sum_{j=1}^m \lambda_j p_{\theta_{a_k}(A_l)} p_{\theta_{a_k}(A_j \cap A_l)} = \lambda_j p_{\theta_{a_k}(A_l)}\]

		Expanding the right-hand side gives us \[\sum_{j=1}^n \mu_j s_{a_k, \theta_{a_k}(A_l)}^{\ast}s_{a_j, B_j}s_{a_j, B_j}^{\ast}s_{a_k, \theta_{a_k}(A_l)}p_{\theta_{a_k}(A_l)} = \sum_{j \colon a_j = a_k} \mu_j p_{\theta_{a_k}(A_l) \cap B_j}\] because the term at $j$ where $a_j \neq a_k$ is just $0$. Enforcing the $\theta_{a_k}(A_l) \cap B_j \neq \emptyset$ condition is trivial.
	\end{proof}
	
	{\bf Claim 7. } For all $k \in [1, n]$, we have that $B_k = \bigsqcup_{i=1}^m \theta_{a_k}(A_i) \cap B_k$.
	
	\begin{proof}
		$\bigsqcup_{i=1}^m \theta_{a_k}(A_i)$ is a disjoint union because $A_i$ are pairwise disjoint. Let $C_k = B_k \setminus \bigcup_{i=1}^m \theta_{a_k}(A_i)$. We will show that $C_k = \emptyset$. Note that $C_k \in \mathcal I_{a_k}$ because $B_k \in \mathcal I_{a_k}$.
		
		Hence, we write \[xs_{a_k, C_k} = \sum_{i=1}^m \lambda_i p_{A_i} s_{a_k, C_k} = \sum_{i=1}^m \lambda_i s_{a_k, C_k \cap \theta_{a_k}(A_i)} = 0\] Now define $D_k = \bigcup_{j \in [1, n] \colon a_j = a_k}^n B_j$. Note that $D_k \in \mathcal I_{a_k}$ so  $s_{a_k, D_k}$ is a valid element. We find that \[0 = s^{\ast}_{a_k, D_k}(x-a)s_{a_k, C_k} = -s^{\ast}_{a_k, D_k}as_{a_k, C_k} = -s^{\ast}_{a_k, D_k}\left( \sum_{j=1}^n \mu_j s_{a_j, B_j} s^{\ast}_{a_j, B_j} \right) s_{a_k, C_k} = \]
		\[-\sum_{j \in [1, n] \colon a_j = a_k} \mu_j p_{D_k \cap C_k \cap B_j} = -\sum_{j \in [1, n]\colon a_j = a_k} \mu_j p_{C_k \cap B_j} = -\mu_k p_{C_k}\] In the last equality, we use the fact that for $i\neq j$ such that $a_i = a_j$, then $B_i$ and $B_j$ are pairwise disjoint. For the second to last equality we use the fact that $B_j \subseteq D_k$ for any $j$ such that $a_j = a_k$. Thus, by Corollary~\ref{corollary:nonzero}, we find that $C_k = \emptyset$.
	\end{proof}
	
	{\bf Claim 8. } For $l \in [1, m]$ and $c \in \mathcal A$ such that $\theta_{c}(A_l) \neq \emptyset$, there exists $k \in [1, n]$ such that $a_k = c$. Furthermore, $A_l \in \mathcal B_{\text{reg}}$.

	\begin{proof}
		As in previous claims, we have that \[s_{c, \theta_c(A_l)}^{\ast} x s_{c, \theta_c(A_l)} = s_{c, \theta_c(A_l)}^{\ast} a s_{c, \theta_c(A_l)}\]
		
		We expand the left-hand side to be equal to \[s^{\ast}_{c, \theta_c(A_l)} \left( \sum_{j=1}^m \lambda_j p_{A_j} \right) s_{c, \theta_c(A_l)} = \sum_{j=1}^m \lambda_j s^{\ast}_{c,\theta_c(A_l)} s_{c, \theta_c(A_l)} p_{\theta_c(A_j)} = \lambda_l p_{\theta_{c}(A_l)} \neq 0\] using the pairwise disjointness of $A_j$.
		
		This means that $s_{c, \theta_c(A_l)}^{\ast} a s_{c, \theta_c(A_l)} \neq 0$ and hence there must be $k \in [1, n]$ such that $a_k = c$. This tells us that $\Delta_{A_l} \subseteq \{a_1, \ldots, a_n\}$ and hence, we find that $|\Delta_{A_l}| < \infty$. Now since $x \in \mathrm{Ann}_S(I)^{\perp}$, we have that $A_l \cap B = \emptyset$ for every $B \in \mathcal B_{\text{sink}}$. Namely, this says that for any $A \subseteq A_l$ we have that $A \cap B \subseteq A_l \cap B \notin \mathcal B_{\text{sink}}$ so $|\Delta_{A}| > 0$ for all $A \subseteq A_l$ and hence $A_l \in \mathcal B_{\text{reg}}$.
	\end{proof}
	{\bf Claim 9. } We have that $x = a$ and, in particular, Condition (3) of Theorem 5.1 holds.
	
	\begin{proof}
	By Claim 8, we have that all $A_l$ are regular so we can write \[x = \sum_{i=1}^m \lambda_i p_{A_i} = \sum_{i=1}^n \lambda_i \sum_{c \in \Delta_{A_i}} s_{c, \theta_c(A_i)}s_{c, \theta_c(A_i)}^{\ast}\] Now applying Claim 6 and using the fact that all $c \in \Delta_{A_j}$ appear as some $a_k$, we get
	\[\sum_{i=1}^m \lambda_i \sum_{c \in \Delta_{A_i}} s_{c, \theta_c(A_i)}s_{c, \theta_c(A_i)}^{\ast} = \sum_{i=1}^n \sum_{c \in \Delta_{A_i}} s_{c, \theta_c(A_i)} \left( \sum_{\substack{j \in [1, n] \colon a_j = c \\ \theta_c(A_i) \cap B_j \neq \emptyset}} \mu_j p_{\theta_c(A_i) \cap B_j}\right) s^{\ast}_{c, \theta_c(A_i)}\]
	\[=\sum_{i=1}^m\sum_{c \in \Delta_{A_i}} \sum_{\substack{j \in [1, n] \colon a_j = c}} \mu_j s_{c, \theta_c(A_i) \cap B_j}s^{\ast}_{c, \theta_c(A_i) \cap B_j}\]
	\[=\sum_{i=1}^m \sum_{\substack{j \in [1, n] \colon a_j \in \Delta_{A_i}}} \mu_j s_{a_j, \theta_{a_j}(A_i) \cap B_j}s^{\ast}_{a_j, \theta_c(A_i) \cap B_j}\]
	\[=\sum_{i=1}^m \sum_{j=1}^n \mu_j s_{a_j, \theta_{a_j}(A_i) \cap B_j}s^{\ast}_{a_j, \theta_c(A_i) \cap B_j}\]
	
	On the other hand, by Claim 7 we have that 
	\[a = \sum_{j=1}^n \mu_j s_{a_j, B_j} s_{a_j, B_j}^{\ast} = \sum_{j=1}^n \mu_j s_{a_j, B_j} \left( \sum_{i=1}^m p_{\theta_{a_j}(A_i) \cap B_j} \right) s_{a_j, B_j}^{\ast}\]
	\[=\sum_{i=1}^m \sum_{j=1}^n \mu_j s_{a_j, \theta_{a_j}(A_i) \cap B_j}s^{\ast}_{a_j, \theta_c(A_i) \cap B_j}\]
	so $x = a$ and in Condition (3) holds.
	\end{proof}

	It remains to prove Condition (4). Let $x \in \mathrm{Ann}_S(I) \cap \mathrm{Ann}_S(I)^{\perp}$. By Claim 1, $x = \sum_{j=1}^n \lambda_j p_{A_j}$ where $A_j \in \mathcal B_{\text{sink}}$ where $A_j$ are pairwise disjoint. Let $k \in [1, n]$ be arbitrary. Since $x \in \mathrm{Ann}_S(I)^{\bot}$ and each $p_{A_k} \in \mathrm{Ann}_S(I)$, we get \[0 = p_{A_k}x = p_{A_k}\sum_{j=1}^n \lambda_j p_{A_j} = \lambda_k p_{A_k}\] Thus, we get that $A_k = \emptyset$ for all $k \in [1, n]$ and hence $x = 0$, proving Condition (4).
	
	Finally, note that $S$ contains all generators of the form $p_A$ for $A \in \mathcal B$, $I$ contains all generators of the form $s_{a, A}$ for $a \in \mathcal L$ and $A \in \mathcal I_{a}$, and $J$ contains all generators of the form $s_{a, A}^{\ast}$ for $a \in \mathcal L$ and $A \in \mathcal I_a$, so $L_R(\mathcal B, \mathcal L, \theta, \mathcal I)$ is generated as a ring by $S$, $I$, and $J$. Hence, by Theorem~\ref{ssystem}, $L_R(\mathcal B, \mathcal L, \theta, \mathcal I)$ is graded isomorphic to $\mathcal O_{(J, I, \psi)}$.
\end{proof}
	
	We now obtain a graded uniqueness theorem for generalized Boolean dynamical system algebras which is the algebraic analog of \cite[Corollary~6.2]{CARLSEN2020124037}.
	
	\begin{theorem}\label{gradednonrel}
		Let $(\mathcal B, \mathcal L, \theta, \mathcal I)$ be a generalized Boolean dynamical system and $A$ a $\mathbb Z$-graded $R$-algebra. If $f: L_R(\mathcal B, \mathcal L, \theta, \mathcal I) \rightarrow A$ is a $\mathbb Z$-graded homomorphism of $R$-algebras, then $f$ is injective if and only if $f(rp_A) \neq 0$ for all $\emptyset \neq A \in \mathcal B$ and $0 \neq r \in R$.
	\end{theorem}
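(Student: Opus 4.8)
The forward implication is immediate: if $f$ is injective then, since $rp_A \neq 0$ for every $\emptyset \neq A \in \mathcal B$ and $0 \neq r \in R$ by Corollary~\ref{corollary:nonzero}, we obtain $f(rp_A) \neq 0$.

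For the converse the plan is to reduce to the graded uniqueness theorem for algebraic Cuntz--Pimsner rings. By Theorem~\ref{cuntzrealize} there is a graded isomorphism $L_R(\mathcal B, \mathcal L, \theta, \mathcal I) \cong \mathcal O_{(J, I, \psi)}$ carrying the subring $S = \mathrm{span}_R\{p_A \colon A \in \mathcal B\}$ onto the coefficient ring of the $S$-system $(J,I,\psi)$. Precomposing $f$ with this isomorphism yields a graded homomorphism out of $\mathcal O_{(J, I, \psi)}$, and by \cite[Corollary~5.4]{Carlsen2008AlgebraicCR} such a homomorphism is injective as soon as its restriction to $S$ is injective; the hypotheses of that corollary are exactly the statement that $(J,I,\psi)$ is an $S$-system with the relevant local-unit property, which is precisely what Theorem~\ref{ssystem} furnishes. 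Hence it suffices to prove that $f|_S$ is injective.

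So let $x \in S$ with $f(x) = 0$ and write $x = \sum_{j=1}^n r_j p_{A_j}$; by Lemma~\ref{lemma:disjointsplit} we may take the $A_j$ pairwise disjoint. Fixing $k$ and multiplying on the left by $p_{A_k}$, relation (1) of Definition~\ref{definition:boolrelations} together with pairwise disjointness gives $p_{A_k} x = \sum_j r_j p_{A_k \cap A_j} = r_k p_{A_k}$, so $f(r_k p_{A_k}) = f(p_{A_k})f(x) = 0$. By the hypothesis this is impossible unless $A_k = \emptyset$ or $r_k = 0$, and in either case $r_k p_{A_k} = 0$. Therefore $x = \sum_k r_k p_{A_k} = 0$, so $f|_S$ is injective and the theorem follows.

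Most of the real work has already been carried out in Theorem~\ref{cuntzrealize}; granting that, the only point requiring care is checking that the cited Cuntz--Pimsner uniqueness result applies verbatim to the $S$-system $(J,I,\psi)$ and that the isomorphism of Theorem~\ref{cuntzrealize} respects the $\mathbb Z$-gradings, so that injectivity on $S$ genuinely upgrades to injectivity of $f$. Both are built into the construction, so I anticipate no obstacle here. As indicated earlier, feeding this result through the isomorphism of Theorem~\ref{theorem:relativeiso} will also complete the proof of injectivity in Theorem~\ref{theorem:skewiso}.
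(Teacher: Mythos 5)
Your proposal is correct and follows essentially the same route as the paper, which realizes the algebra as a Cuntz--Pimsner algebra via Theorem~\ref{cuntzrealize} and then invokes the argument of \cite[Corollary~5.5]{Boava2021LeavittPA} (i.e.\ the graded uniqueness theorem of \cite{Carlsen2008AlgebraicCR} reducing injectivity to injectivity on $S$, settled by disjointifying with Lemma~\ref{lemma:disjointsplit} and applying the hypothesis together with Corollary~\ref{corollary:nonzero}). You have simply written out explicitly the steps the paper leaves to the citation.
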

	\begin{proof}
		After realizing the algebra as a Cuntz-Pimsner algebra in Theorem~\ref{cuntzrealize} and proving that $rp_A \neq 0$ for $0 \neq r \in R, \emptyset \neq A \in \mathcal B$ using Corollary~\ref{corollary:nonzero}, the proof is the same as \cite[Corollary~5.5]{Boava2021LeavittPA}.
	\end{proof}
	
	We now use the graded uniqueness theorem for generalized Boolean dynamical systems along with the isomorphism from Theorem~\ref{theorem:relativeiso} to prove a graded uniqueness theorem for relative generalized Boolean dynamical systems that is the algebraic analog of \cite[Theorem~6.1]{CARLSEN2020124037}.
	
	\begin{theorem}
		Let $(\mathcal B, \mathcal L, \theta, \mathcal I, \mathcal J)$ be a generalized relative Boolean dynamical system and $A$ a $\mathbb Z$-graded $R$-algebra. If $f: L_R(\mathcal B, \mathcal L, \theta, \mathcal I, \mathcal J) \rightarrow A$ is a $\mathbb Z$-graded homomorphism of $R$-algebras, then $f$ is injective if and only if $f(rp_A) \neq 0$ for all $\emptyset \neq A \in \mathcal B$ and $f(rq_A) \neq 0$ for all $A \in \mathcal B_{\text{reg}} \setminus \mathcal J$ with $0 \neq r \in R$.
	\end{theorem}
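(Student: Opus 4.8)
The plan is to deduce this from the non-relative graded uniqueness theorem, Theorem~\ref{gradednonrel}, by transporting everything through the isomorphism $\psi\colon L_R(\tilde{\mathcal B},\mathcal L,\tilde\theta,\tilde{\mathcal I})\to L_R(\mathcal B,\mathcal L,\theta,\mathcal I,\mathcal J)$ of Theorem~\ref{theorem:relativeiso}. Both $\psi$ and its inverse $\varphi$ send homogeneous generators to homogeneous elements of the same $\mathbb Z$-degree, hence are graded, so $\bar f\coloneqq f\circ\psi$ is a graded $R$-algebra homomorphism and $f$ is injective if and only if $\bar f$ is. The forward implication of the theorem is then immediate: if $f$ is injective, then since $rp_A\neq 0$ for $\emptyset\neq A\in\mathcal B$ and $rq_A\neq 0$ for $A\in\mathcal B_{\text{reg}}\setminus\mathcal J$ by Corollary~\ref{corollary:nonzerorel}, their $f$-images are nonzero. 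For the reverse implication, assume the stated nonvanishing hypotheses on $f$; by Theorem~\ref{gradednonrel} applied to the graded map $\bar f$, it suffices to prove that $\bar f(rp_Z)\neq 0$ for every $0\neq r\in R$ and every nonzero $Z=(X,[Y]_{\mathcal J})\in\tilde{\mathcal B}$.

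The heart of the argument is a decomposition of such a $Z$. First I would note that the membership condition $[X]_{\mathcal B_{\text{reg}}}=[Y]_{\mathcal B_{\text{reg}}}$ forces both $X\setminus Y$ and $Y\setminus X$ into $\mathcal B_{\text{reg}}$, since each is contained in a regular set witnessing this equivalence and $\mathcal B_{\text{reg}}$ is an ideal. I would then set
\[Z_1=(X\cap Y,[X\cap Y]_{\mathcal J}),\qquad Z_2=(X\setminus Y,[\emptyset]_{\mathcal J}),\qquad Z_3=(\emptyset,[Y\setminus X]_{\mathcal J}),\]
which are legitimate, pairwise disjoint elements of $\tilde{\mathcal B}$ with join $Z$; hence, by relation~(1) in Definition~\ref{definition:boolrelations}, $p_Z=p_{Z_1}+p_{Z_2}+p_{Z_3}$ with the $p_{Z_i}$ pairwise orthogonal idempotents. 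Using the explicit formula for $\psi$ from Theorem~\ref{theorem:relativeiso} with the obvious choices of the auxiliary regular sets, one computes $\psi(p_{Z_1})=p_{X\cap Y}$, $\psi(p_{Z_3})=q_{Y\setminus X}$, and
\[\psi(p_{Z_2})=p_{X\setminus Y}-q_{X\setminus Y}=\sum_{\alpha\in\Delta_{X\setminus Y}}s_{\alpha,\theta_{\alpha}(X\setminus Y)}\,s_{\alpha,\theta_{\alpha}(X\setminus Y)}^{\ast},\]
the last equality being the definition of $q_{X\setminus Y}$, which applies since $X\setminus Y\in\mathcal B_{\text{reg}}$.

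The conclusion is then a short sandwiching argument. Since the $\bar f(p_{Z_i})$ are orthogonal idempotents, $\bar f(p_{Z_i})\,\bar f(rp_Z)\,\bar f(p_{Z_i})=\bar f(rp_{Z_i})$, so it suffices to exhibit one index $i$ with $Z_i\neq\emptyset$ and $\bar f(rp_{Z_i})\neq 0$; as $Z\neq\emptyset$, at least one $Z_i$ is nonzero. If $Z_1\neq\emptyset$, then $X\cap Y\neq\emptyset$ and $\bar f(rp_{Z_1})=f(rp_{X\cap Y})\neq 0$ by hypothesis. If $Z_3\neq\emptyset$, then $Y\setminus X\notin\mathcal J$, so $Y\setminus X\in\mathcal B_{\text{reg}}\setminus\mathcal J$ and $\bar f(rp_{Z_3})=f(rq_{Y\setminus X})\neq 0$ by hypothesis. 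If $Z_2\neq\emptyset$, put $W\coloneqq X\setminus Y$, a nonempty set in $\mathcal B_{\text{reg}}$, so $\Delta_W$ is finite and nonempty; fixing $\alpha_0\in\Delta_W$ and multiplying $\bar f(rp_{Z_2})$ on the left by $f(s_{\alpha_0,\theta_{\alpha_0}(W)}^{\ast})$ and on the right by $f(s_{\alpha_0,\theta_{\alpha_0}(W)})$, relation~(3) collapses the displayed sum and leaves $f(rp_{\theta_{\alpha_0}(W)})$, which is nonzero by hypothesis because $\theta_{\alpha_0}(W)\neq\emptyset$. In all cases $\bar f(rp_Z)\neq 0$, so $\bar f$, and hence $f$, is injective.

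The only points needing genuine care, which I would spell out in full, all occur in the middle step: that $X\setminus Y$ and $Y\setminus X$ are regular; that $Z=Z_1\vee Z_2\vee Z_3$ with the pieces pairwise disjoint, so that relation~(1) really does give the clean sum and the $p_{Z_i}$ are honestly orthogonal; and the precise equivalences $Z_1\neq\emptyset\iff X\cap Y\neq\emptyset$, $Z_2\neq\emptyset\iff X\setminus Y\neq\emptyset$, $Z_3\neq\emptyset\iff Y\setminus X\notin\mathcal J$, which together ensure that the three hypotheses on $f$ collectively cover the single hypothesis $Z\neq\emptyset$. The middle piece $Z_2$ is the one place where $\psi$ lands not on a $p$ or a $q$ but on a sum of terms $s_{\alpha}s_{\alpha}^{\ast}$, which is exactly why the extra round of multiplication is needed there; the remaining verifications are routine bookkeeping.
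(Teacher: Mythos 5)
Your argument is correct, and it follows the paper's overall reduction: transport $f$ through the graded isomorphism $\psi$ of Theorem~\ref{theorem:relativeiso}, invoke the non-relative graded uniqueness theorem (Theorem~\ref{gradednonrel}), and reduce everything to showing $\bar f(rp_Z)\neq 0$ for each nonzero $Z=(X,[Y]_{\mathcal J})\in\tilde{\mathcal B}$. Where you genuinely diverge is in how that last step is organized. The paper fixes one pair of auxiliary regular sets $C,D$, writes $\psi(p_Z)=p_X+q_C-q_D$, and runs a sequential contrapositive case analysis (is $X=\emptyset$? is $D\cap X\subsetneq X$? is $C\in\mathcal J$?), re-choosing $C$ and $D$ partway through and using Lemma~\ref{lemma:qcalc} to peel off terms. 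You instead split $Z$ into the three pairwise disjoint pieces $(X\cap Y,[X\cap Y]_{\mathcal J})$, $(X\setminus Y,[\emptyset]_{\mathcal J})$, $(\emptyset,[Y\setminus X]_{\mathcal J})$, whose $\psi$-images are a pure $p$, a pure $\sum_\alpha s_\alpha s_\alpha^{\ast}$, and a pure $q$ respectively, and corner down to whichever piece is nonzero. Both proofs use the same ingredients --- the regularity of $X\setminus Y$ and $Y\setminus X$ forced by $[X]_{\mathcal B_{\text{reg}}}=[Y]_{\mathcal B_{\text{reg}}}$, cutting down by projections, and conjugating by some $s_{\alpha_0,\theta_{\alpha_0}(\cdot)}$ to expose a nonzero $p_{\theta_{\alpha_0}(\cdot)}$ --- but your orthogonal decomposition makes the case structure transparent, exactly matches the three hypotheses on $f$ to the three ways $Z$ can be nonzero, and avoids the paper's mid-proof redefinition of $C$ and $D$; the price is the (routine) verification that the three pieces are legitimate elements of $\tilde{\mathcal B}$ with join $Z$, which you correctly flag as the point needing care.
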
	
	\begin{proof}
		We show that the condition is sufficient, as the necessity of the condition follows easily from Corollary~\ref{corollary:nonzerorel}. Again we will use the isomorphism $\psi: L_R(\tilde{\mathcal B}, \mathcal L, \tilde{\theta}, \tilde{\mathcal I}) \rightarrow L_R(\mathcal B, \mathcal L, \theta, \mathcal I, \mathcal J)$ from Theorem~\ref{theorem:relativeiso} that takes $\psi(p_{(A, [B]_{\mathcal J})}) = p_A + q_C - q_D$ for $A, B \in \mathcal B$. For ease, we repeat the conditions for our elements $C, D$: \[C, D \in \mathcal B_{\text{reg}}\] \[[A]_{\mathcal B_{\text{reg}}} = [B]_{\mathcal B _{\text{reg}}}\] \[A \cup C = B \cup D \text{ and } A \cap C = \emptyset = B \cap D\] 
		
		Note that the isomorphism $\psi$ is $\mathbb Z$-graded. Hence, $f \circ \psi: L_R(\tilde{\mathcal B}, \mathcal L, \tilde{\theta}, \tilde{\mathcal I}) \rightarrow A$ is a $\mathbb Z$-graded morphism of $R$-algebras as well. Thus, from Theorem~\ref{gradednonrel} we get that if $f(\psi(rp_{(A, [B]_{\mathcal J})})) \neq 0$ for all $A$ and $[B]_{\mathcal J}$ at least one non-empty and $r \neq 0$, then $f \circ \psi$ is injective. Because $\psi$ is an isomorphism, this will also prove that $f$ is injective.
		
		Note that $f(\psi(rp_{(A, [B]_{\mathcal J})})) = f(r(p_A+ q_C - q_D))$. Our goal now is to prove that if $f(r(p_A + q_C - q_D)) = 0$, then there is some $\emptyset \neq Z_1 \in \mathcal B_{\text{reg}} \setminus \mathcal J$ that satisfies $f(rq_{Z_1}) = 0$ or there is some $\emptyset \neq Z_2 \in \mathcal B$ that satisfies  $f(rp_{Z_2}) = 0$ for some $0 \neq r \in R$.
		
		If $A = \emptyset$, then we must have that $B \in \mathcal B \setminus \mathcal J$ because at least one of $A$ or $[B]_{\mathcal J}$ must be non-empty. Because $[A]_{\mathcal B_{\text{reg}}} = [B]_{\mathcal B_{\text{reg}}}$ we get that $B \in \mathcal B_{\text{reg}}$.  It's easy to show that we can choose $C = B$ and $D = \emptyset$ so we get that $f(r(p_A + q_C - q_D)) = f(rq_B) = 0$ with $B \in \mathcal B_{\text{reg}} \setminus \mathcal J$ and and $r \neq 0$, so this case is done.
		
		Now assume that $A \neq \emptyset$. We multiply $f(r(p_A + q_C - q_D)) = 0$ by $f(p_A)$ and use the fact that $A \cap C = \emptyset$ and the calculations in Lemma~\ref{lemma:qcalc} to deduce that $f(r(p_A - q_{D \cap A})) = 0$. If $D \cap A \subsetneq A$, then multiply by $f(p_{A \setminus (D \cap A)})$ to get that $f(rp_{A \setminus (D \cap A)}) = 0$ for $A \setminus (D \cap A) \neq \emptyset$ and $r \neq 0$. Otherwise, $D \cap A = A \Rightarrow A \subseteq D$. Because $D \in \mathcal B_{\text{reg}}$, we find that $A \in \mathcal B_{\text{reg}}$ and hence so is $B$ because $[A]_{\mathcal B_{\text{reg}}} = [B]_{\mathcal B_{\text{reg}}}$. Knowing that $A$ and $B$ are regular, we redefine $C \coloneqq B \setminus A$ and $D \coloneqq A \setminus B$.
		
		If $C \notin \mathcal J$, we multiply $f(r(p_A + q_C - q_D)) = 0$ by $f(p_C)$ and note that $D \cap C = \emptyset$ and $A \cap C = \emptyset$ so we get that $f(rq_C) = 0$ for $C \in \mathcal B_{\text{reg}} \setminus \mathcal J$.
		
		Otherwise, assume $C \in \mathcal J$ so we get that $q_C = 0$ and thus \[0 = f(r(p_A - q_A)) = f\left(r\sum_{a \in \Delta_A} s_{a, \theta_{a}(A)}s^{\ast}_{a, \theta_{a}(A)}\right)\] Choose some $a \in \Delta_A$ and conjugate our expression by $f(s_{a, \theta_{a}(A)}^{\ast})$ and $f(s_{a, \theta_{a}(A)})$ to get $f(rp_{\theta_{a}(A)}) = 0$ where $\theta_{a}(A) \neq \emptyset$ because $a \in \Delta_A$, so we are done.
	\end{proof}

	\section{Graded Ideal Structure} \label{section:gradedideal}
	Throughout, let $(\mathcal B, \mathcal L, \theta, \mathcal I, \mathcal J)$ be a relative generalized Boolean dynamical system. As an application of the graded uniqueness theorem, we describe the graded ideal structure for relative generalized Boolean dynamical systems. 	The proofs in this section are the same symbolic manipulations as in \cite[Section~7]{CARLSEN2020124037} so we omit them for the sake of brevity. All following definitions are found in \cite[Section~7]{CARLSEN2020124037}. 
	\begin{definition}
			For any ideal $H \subseteq \mathcal B$, we say $H$ is \textit{hereditary} if it is closed under our morphisms $\theta_{\alpha}(\cdot)$. We say a hereditary ideal $H \subseteq \mathcal B$ is $\mathcal J$-saturated if, for any $A \in \mathcal J$ such that $\{\theta_{\alpha}(A) \colon \alpha \in \mathcal L\} \subseteq H$, we have that $A \in H$.
	\end{definition}	

	\begin{definition}
		For any hereditary $\mathcal J$-saturated ideal $H$ of $\mathcal B$ there is an equivalence relation on $\mathcal B$ defined as $A \sim_{H} B \Leftrightarrow \exists (C \in H)(A \cup C = B \cup C)$. Define $\mathcal B/H \coloneqq \{[A]_H \colon A \in \mathcal B\}$ which is another generalized Boolean algebra with the natural operations. One can also define the sets $\mathcal I/H = \{\mathcal I_{\alpha}/H\}_{\alpha \in \mathcal L}$ and functions $\theta/H = \{\theta_{\alpha}/H\}_{\alpha \in \mathcal L}$ in a similar way. The quadruplet $(\mathcal B/H, \mathcal L, \theta/H, \mathcal I/H)$ forms a generalized Boolean dynamical system. 
		
		Define a generalized Boolean algebra $\mathcal B_{H} \coloneqq \{A \in \mathcal B \colon [A]_H \in (\mathcal{B}/H)_{\text{reg}}\}$. For any ideal $S \subseteq \mathcal B_H$ such that $H \cup \mathcal J \subseteq S$, we define the set \[S_{H} = \{p_A - \sum_{\alpha \in \Delta^{(\mathcal B/H, \mathcal L, \mathcal \theta/H, \mathcal I/H)}_{[A]_H}} s_{\alpha, \theta_{\alpha}(A)}s_{\alpha, (A)}^{\ast} \colon A \in S\}\] Denote $I_{(H, S)}$ to be the ideal generated by $S_H$ in $L_R(\mathcal B, \mathcal L, \theta, \mathcal I, \mathcal J)$.
	\end{definition}
	
	\begin{remark}
	Note that $S/H = \{[A]_H \colon A \in S\}$ is an ideal of $(\mathcal B/H)_{\text{reg}}$. Hence, \[(\mathcal{B}/H, \mathcal L, \theta/H, \mathcal I/H, S/H)\] forms a relative generalized Boolean dynamical system.
	\end{remark}
	
	We now list the analogs of Lemma~7.2, Proposition~7.3, and Theorem~7.4 in \cite{CARLSEN2020124037} for $C^{\ast}$-algebras. After the development of the graded uniqueness theorems, the proofs are the same.
	
	\begin{lemma}[{\cite[Lemma~7.2]{CARLSEN2020124037}}] \label{lemma:idealhered}
		Let $I \subseteq L_R(\mathcal B, \mathcal L, \theta, \mathcal I, \mathcal J)$ be an ideal, then the set $H_I = \{A \in \mathcal B \colon p_A \in H\}$ is a hereditary $\mathcal J$-saturated set. The set $S_I = \{A \in \mathcal B_H \colon p_A - \sum_{\alpha \in \Delta_{[A]_{H_I}}} s_{\alpha, \theta_{\alpha}(A)}s^{\ast}_{\alpha, \theta_{\alpha}(A)} \in I\}$ is an ideal of $\mathcal B_H$ containing $H_I$ and $\mathcal J$.
	\end{lemma}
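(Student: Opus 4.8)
The plan is to verify everything directly from the relations of Definition~\ref{definition:boolrelations}, following the symbolic manipulations of \cite[Lemma~7.2]{CARLSEN2020124037}. Throughout write $H=H_I=\{A\in\mathcal B\colon p_A\in I\}$. That $H$ is an ideal of $\mathcal B$ is immediate from relation~(1): $p_{A\cup B}=p_A+p_B-p_Ap_B$ and $p_{A\cap B}=p_Ap_B$, so $A,B\in H\Rightarrow A\cup B\in H$ and $A\in H,\ B\in\mathcal B\Rightarrow A\cap B\in H$. For hereditariness, fix $A\in H$ and $\alpha\in\mathcal L$; since $\mathcal F_\alpha\subseteq\mathcal I_\alpha$ we have $\theta_\alpha(A)\in\mathcal I_\alpha$, and using relations~(2) and (3),
\[
s_{\alpha,\theta_\alpha(A)}^{\ast}\,p_A\,s_{\alpha,\theta_\alpha(A)}
= p_{\theta_\alpha(A)}\,s_{\alpha,\theta_\alpha(A)}^{\ast}s_{\alpha,\theta_\alpha(A)}
= p_{\theta_\alpha(A)},
\]
which lies in $I$ because $I$ is two-sided and $p_A\in I$; hence $\theta_\alpha(A)\in H$.

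For $\mathcal J$-saturation, let $A\in\mathcal J$ with $\theta_\alpha(A)\in H$ for all $\alpha\in\mathcal L$. Since $\mathcal J\subseteq\mathcal B_{\text{reg}}$, the set $\Delta_A$ is finite and relation~(5) gives $p_A=\sum_{\alpha\in\Delta_A}s_{\alpha,\theta_\alpha(A)}s_{\alpha,\theta_\alpha(A)}^{\ast}$; by relation~(4) each summand equals $s_{\alpha,\theta_\alpha(A)}\,p_{\theta_\alpha(A)}\,s_{\alpha,\theta_\alpha(A)}^{\ast}\in I$, whence $p_A\in I$ and $A\in H$. I would also record two bookkeeping facts for use below: $\mathcal B_H$ is closed under $\cup$ and $\cap$ because $(\mathcal B/H)_{\text{reg}}$ is an ideal of $\mathcal B/H$, and $[C]_H\le[A]_H$ implies $\Delta_{[C]_H}\subseteq\Delta_{[A]_H}$; moreover, combining that $H$ is $\mathcal J$-saturated with the fact that $\mathcal J$ is an ideal of $\mathcal B_{\text{reg}}$, one checks $A\in\mathcal J\Rightarrow[A]_H\in(\mathcal B/H)_{\text{reg}}$, i.e. $\mathcal J\subseteq\mathcal B_H$, so that $\mathcal J\subseteq S_I$ at least makes sense.

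For $S_I$, put $\delta_A:=p_A-\sum_{\alpha\in\Delta_{[A]_H}}s_{\alpha,\theta_\alpha(A)}s_{\alpha,\theta_\alpha(A)}^{\ast}$ for $A\in\mathcal B_H$ (a finite sum because $[A]_H\in(\mathcal B/H)_{\text{reg}}$), so $S_I=\{A\in\mathcal B_H\colon\delta_A\in I\}$. Arguing exactly as in Lemma~\ref{lemma:qcalc}, but keeping track of the arrows $\alpha\in\Delta_A\setminus\Delta_{[A]_H}$ (which satisfy $\theta_\alpha(A)\in H$, so $s_{\alpha,\theta_\alpha(A)}s_{\alpha,\theta_\alpha(A)}^{\ast}=s_{\alpha,\theta_\alpha(A)}p_{\theta_\alpha(A)}s_{\alpha,\theta_\alpha(A)}^{\ast}\in I$), one obtains for $A,B\in\mathcal B_H$ the congruences
\[
p_A\delta_B\equiv\delta_{A\cap B}\equiv\delta_Bp_A,\qquad
\delta_A\delta_B\equiv\delta_{A\cap B},\qquad
\delta_{A\cup B}\equiv\delta_A+\delta_B-\delta_{A\cap B},
\]
all modulo $I$; equivalently, these are the images of the identities of Lemma~\ref{lemma:qcalc} under the homomorphism $L_R(\mathcal B/H,\mathcal L,\theta/H,\mathcal I/H,\mathcal J/H)\to L_R(\mathcal B,\mathcal L,\theta,\mathcal I,\mathcal J)/I$ sending $q_{[A]_H}\mapsto\overline{\delta_A}$. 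The ideal axioms for $S_I$ are then formal: if $\delta_A,\delta_B\in I$ then $\delta_{A\cup B}\in I$, so $A\cup B\in S_I$; and if $\delta_A\in I$ and $B\in\mathcal B_H$ then $\delta_{A\cap B}\equiv\delta_A\delta_B\in I$, so $A\cap B\in S_I$. Finally $H\subseteq S_I$ since $A\in H\Rightarrow[A]_H=[\emptyset]_H\Rightarrow\Delta_{[A]_H}=\emptyset\Rightarrow\delta_A=p_A\in I$; and $\mathcal J\subseteq S_I$ since for $A\in\mathcal J$ we have $A\in\mathcal B_H$, and relation~(5) together with $\Delta_{[A]_H}\subseteq\Delta_A$ gives $\delta_A=\sum_{\alpha\in\Delta_A\setminus\Delta_{[A]_H}}s_{\alpha,\theta_\alpha(A)}s_{\alpha,\theta_\alpha(A)}^{\ast}\in I$.

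The step I expect to be the main obstacle is the third paragraph: getting the $\delta$-identities right requires care because, unlike $q_A$, the element $\delta_A$ omits only the arrows whose range lies outside $H$, so the naive identities of Lemma~\ref{lemma:qcalc} hold only up to elements of $I$ coming from arrows into $H$, and one must match $\Delta_{[A\cap B]_H}$, $\Delta_{[A\cup B]_H}$ with $\Delta_{[A]_H}$, $\Delta_{[B]_H}$ correctly under the quotient; if one prefers the conceptual route via the homomorphism out of $L_R(\mathcal B/H,\mathcal L,\theta/H,\mathcal I/H,\mathcal J/H)$, the obstacle is instead checking that the images of its generators satisfy relations~(1)--(5) in $L_R(\mathcal B,\mathcal L,\theta,\mathcal I,\mathcal J)/I$ — in particular that the Cuntz--Krieger relation for $\mathcal J/H$ survives — which is again exactly this regular-set bookkeeping.
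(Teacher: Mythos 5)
Your proof is correct and is exactly the direct verification from relations (1)--(5) that the paper has in mind when it defers to the ``same symbolic manipulations'' of \cite[Lemma~7.2]{CARLSEN2020124037}, including the key bookkeeping that arrows $\alpha$ with $\theta_{\alpha}(A)\in H_I$ contribute terms $s_{\alpha,\theta_{\alpha}(A)}p_{\theta_{\alpha}(A)}s_{\alpha,\theta_{\alpha}(A)}^{\ast}\in I$. (You also silently correct the typo $p_A\in H$ to $p_A\in I$ in the definition of $H_I$, which is the intended reading.)
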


	\begin{theorem}[{\cite[Proposition~7.3]{CARLSEN2020124037}}]
		Let $I \subseteq L_R(\mathcal B, \mathcal L, \theta, \mathcal I, \mathcal J)$ be an ideal. Let $H_I$ and $S_I$ be defined as before. There is a surjective graded homomorphism \[\varphi_I: L_R(\mathcal B/H_I, \mathcal L, \theta/H_I, \mathcal I/H_I, S_I/H) \rightarrow L_R(\mathcal B, \mathcal L, \theta, \mathcal I, \mathcal J)/I\] \[\varphi([A]_{H_I}) = p_A + I \text{ and } \varphi(s_{a, [A]_{H_I}}) = s_{a, A} + I\]
		
		Furthermore, the following are equivalent
		\begin{enumerate}
			\item $I$ is graded
			\item The map $\varphi_I$ is an isomorphism
			\item $I = I_{(H_I, S_I)}$
		\end{enumerate}
	\end{theorem}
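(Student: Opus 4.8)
The plan is to build $\varphi_I$ from the universal property of $L_R(\mathcal B/H_I, \mathcal L, \theta/H_I, \mathcal I/H_I, S_I/H)$ and then prove the three equivalences, with the relative graded uniqueness theorem doing the real work. First I would define $\varphi_I$ on generators by $p_{[A]_{H_I}} \mapsto p_A + I$ and $s_{a,[A]_{H_I}} \mapsto s_{a,A}+I$ and check it is well defined: if $[A]_{H_I} = [B]_{H_I}$ then $A\cup C = B\cup C$ for some $C \in H_I$, and since $p_C \in I$ and $s_{\alpha, \theta_\alpha(C)} = s_{\alpha,\theta_\alpha(C)}p_{\theta_\alpha(C)} \in I$ (as $\theta_\alpha(C) \in H_I$ by heredity), relation (1) of Definition~\ref{definition:boolrelations} forces $p_A + I = p_B + I$ and likewise $s_{a,A} + I = s_{a,B}+I$. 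Then I would verify that these cosets satisfy relations (1)--(5) for the system $(\mathcal B/H_I, \mathcal L, \theta/H_I, \mathcal I/H_I, S_I/H)$: relations (1)--(4) transfer verbatim from $L_R(\mathcal B, \mathcal L, \theta, \mathcal I, \mathcal J)$, while relation (5) for $[A]_{H_I} \in S_I/H$ is precisely the assertion that $p_A - \sum_{\alpha\in\Delta_{[A]_{H_I}}} s_{\alpha,\theta_\alpha(A)}s_{\alpha,\theta_\alpha(A)}^\ast \in I$, which holds by the definition of $S_I$. Universality then produces the $R$-algebra homomorphism $\varphi_I$; it is surjective since each $p_A + I$ and $s_{a,A}+I$ lies in its image, and it carries degree-$n$ generators of the graded domain to cosets of degree-$n$ elements, so it is graded whenever $I$ is (so that the codomain is graded).

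For the equivalences I would argue $(3)\Rightarrow(1)\Rightarrow(2)\Rightarrow(3)$. The implication $(3)\Rightarrow(1)$ is immediate: each generator $p_A - \sum_{\alpha\in\Delta_{[A]_{H_I}}} s_{\alpha,\theta_\alpha(A)}s_{\alpha,\theta_\alpha(A)}^\ast$ of $I_{(H_I,S_I)}$ is homogeneous of degree $0$ for the grading of Theorem~\ref{theorem:zgrading}, so $I_{(H_I,S_I)}$, hence $I$, is graded. For $(1)\Rightarrow(2)$, assume $I$ graded, so $L_R(\mathcal B,\mathcal L,\theta,\mathcal I,\mathcal J)/I$ carries a $\mathbb Z$-grading and $\varphi_I$ is graded; I would then apply the relative graded uniqueness theorem proved above to $\varphi_I$. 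Its hypotheses require $\varphi_I(rp_{[A]_{H_I}}) = rp_A + I \neq 0$ for $\emptyset \neq [A]_{H_I}$ and $\varphi_I(rq_{[A]_{H_I}}) \neq 0$ for $[A]_{H_I} \in (\mathcal B/H_I)_{\text{reg}} \setminus (S_I/H)$, each for $0\neq r$; by the definitions of $H_I$ and $S_I$ in Lemma~\ref{lemma:idealhered} these reduce to $A\notin H_I$ and $A\notin S_I$, with the passage to arbitrary nonzero multiples handled via Corollary~\ref{corollary:nonzerorel}, so $\varphi_I$ is injective, hence an isomorphism. Finally, for $(2)\Rightarrow(3)$, one always has $I_{(H_I,S_I)}\subseteq I$ since the generators of $I_{(H_I,S_I)}$ lie in $I$ by the definition of $S_I$; putting $I' = I_{(H_I,S_I)}$, the bookkeeping identities $H_{I'} = H_I$ and $S_{I'} = S_I$ (proved as in \cite[Section~7]{CARLSEN2020124037}) together with $(1)\Rightarrow(2)$ applied to the graded ideal $I'$ show $\varphi_{I'}\colon L_R(\mathcal B/H_I, \mathcal L, \theta/H_I, \mathcal I/H_I, S_I/H) \to L_R(\mathcal B,\mathcal L,\theta,\mathcal I,\mathcal J)/I'$ is an isomorphism; since $\varphi_I = \pi\circ\varphi_{I'}$ on generators, where $\pi$ is the canonical surjection $L_R(\mathcal B,\mathcal L,\theta,\mathcal I,\mathcal J)/I' \to L_R(\mathcal B,\mathcal L,\theta,\mathcal I,\mathcal J)/I$, the assumption that $\varphi_I$ is an isomorphism forces $\pi$ to be an isomorphism, i.e. $I = I'$.

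The main obstacle is the verification of the hypotheses of the relative graded uniqueness theorem inside $(1)\Rightarrow(2)$: one must be sure that no nonzero $R$-multiple of a surviving generator $p_A$ (for $A\notin H_I$) or of a surviving $q_A$-type element (for $A \notin S_I$) has slipped into the graded ideal $I$ — equivalently, that $H_I$ and $S_I$, as read off from $I$, really are the maximal data with the relevant membership property. This is exactly the point where the definitions of Lemma~\ref{lemma:idealhered}, the assumption that $I$ is graded, and the non-vanishing results of Corollary~\ref{corollary:nonzerorel} must be combined; the remaining steps, including the identities $H_{I'}=H_I$ and $S_{I'}=S_I$, are the same symbolic bookkeeping as in \cite[Section~7]{CARLSEN2020124037}.
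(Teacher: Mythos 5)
Your architecture is the one the paper intends (universality to build $\varphi_I$, then the relative graded uniqueness theorem for $(1)\Rightarrow(2)$, then the bookkeeping of \cite[Section~7]{CARLSEN2020124037}), and you correctly locate the crux in verifying the hypotheses of the uniqueness theorem for $\varphi_I$. But your resolution of that crux does not work. For $(1)\Rightarrow(2)$ you need $\varphi_I(rp_{[A]_{H_I}})=rp_A+I\neq 0$ for every $0\neq r\in R$ and $[A]_{H_I}\neq\emptyset$, i.e.\ that $rp_A\notin I$ whenever $A\notin H_I$. The definition of $H_I$ in Lemma~\ref{lemma:idealhered} only records whether $p_A$ itself lies in $I$, and Corollary~\ref{corollary:nonzerorel} only asserts $rp_A\neq 0$ in $L_R(\mathcal B,\mathcal L,\theta,\mathcal I,\mathcal J)$ --- it says nothing about membership in $I$. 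The implication ``$p_A\notin I\Rightarrow rp_A\notin I$ for all $0\neq r$'' is false over a general unital commutative ring: take $R=\mathbb Z$, a system whose only nonempty set $\{v\}$ is a sink (so $L_{\mathbb Z}\cong\mathbb Z$), and $I=\langle 2p_{\{v\}}\rangle$. This $I$ is graded, $H_I=\{\emptyset\}=S_I$, yet $\varphi_I(2p_{[\{v\}]})=0$, so $\varphi_I$ is not injective and $I\neq I_{(H_I,S_I)}=\{0\}$. Thus the step (and indeed the equivalence $(1)\Leftrightarrow(3)$ as stated) fails unless $R$ is a field or one restricts to \emph{basic} graded ideals (those with $rp_A\in I\Rightarrow p_A\in I$ and $rq_A\in I\Rightarrow q_A\in I$ for $0\neq r$), as in Tomforde's treatment of Leavitt path algebras over commutative rings. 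The same caveat propagates into your $(2)\Rightarrow(3)$, which applies $(1)\Rightarrow(2)$ to $I'=I_{(H_I,S_I)}$.

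Two smaller points. In the well-definedness of $\varphi_I$ on the $s$-generators, the element you exhibit in $I$, namely $s_{\alpha,\theta_\alpha(C)}$, is not the one you need: the relevant identity is $s_{a,A}-s_{a,B}=s_{a,A\setminus B}-s_{a,B\setminus A}$, where $A\setminus B$ and $B\setminus A$ lie in $H_I$ because both are contained in $C$ and $H_I$ is an ideal, whence $s_{a,A\setminus B}=s_{a,A}p_{A\setminus B}\in I$. Also, the statement calls $\varphi_I$ ``graded'' before $I$ is assumed graded; your reading (graded whenever the codomain carries the quotient grading) is the sensible one, but it should be made explicit that only existence and surjectivity are claimed unconditionally.
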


	The set of all pairs $(H, S)$ where $H$ is a hereditary $\mathcal J$-saturated ideal of $\mathcal B$ and $S$ is an ideal of $\mathcal B_H$ with $H \cup \mathcal J \subseteq S$ is a lattice with respect to the order $(H_1, S_1) \subseteq (H_2, S_2) \Leftrightarrow H_1 \subseteq H_2 \text{ and } S_1 \subseteq S_2$. The set of graded ideals is a lattice with respect to set-inclusion.

	\begin{theorem}[{\cite[Theorem~7.4]{CARLSEN2020124037}}]
		Let $(\mathcal B, \mathcal L, \theta, \mathcal I, \mathcal J)$ be a relative generalized Boolean dynamical system. Then the map $(H, S) \mapsto I_{(H, S)}$ is a lattice isomorphism between the lattice of all pairs $(H, S)$ where $H$ is a hereditary $\mathcal J$-saturated ideal of $\mathcal B$ and $S$ is an ideal of $\mathcal B_H$ with $H \cup \mathcal J \subseteq S$ and the lattice of all graded ideals of $L_R(\mathcal B, \mathcal L, \theta, \mathcal I, \mathcal J)$.
	\end{theorem}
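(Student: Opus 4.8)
The plan is to show that $(H,S)\mapsto I_{(H,S)}$ and the assignment $I\mapsto (H_I,S_I)$ from Lemma~\ref{lemma:idealhered} are mutually inverse, order-preserving bijections between the two lattices. The analog of \cite[Proposition~7.3]{CARLSEN2020124037} above already gives $I_{(H_I,S_I)}=I$ for every graded ideal $I$ (its equivalence $(1)\Leftrightarrow(3)$), so the crux is the opposite composition: recovering a pair $(H,S)$ from $I_{(H,S)}$. For this I pass to the quotient dynamical system and invoke Corollary~\ref{corollary:nonzerorel}.

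First, $I_{(H,S)}$ is graded because every generator $p_A-\sum_{\alpha\in\Delta_{[A]_H}}s_{\alpha,\theta_\alpha(A)}s^{\ast}_{\alpha,\theta_\alpha(A)}$ of $S_H$ is homogeneous of degree $0$ for the grading of Theorem~\ref{theorem:zgrading}. Next, for a fixed admissible pair $(H,S)$ I build the canonical quotient homomorphism. Since $A\mapsto [A]_H$ is a morphism of generalized Boolean algebras intertwining $\theta$ with $\theta/H$, the universal property in Definition~\ref{definition:boolrelations} yields a $\mathbb Z$-graded $R$-algebra homomorphism
\[\pi_{(H,S)}\colon L_R(\mathcal B,\mathcal L,\theta,\mathcal I,\mathcal J)\longrightarrow L_R(\mathcal B/H,\mathcal L,\theta/H,\mathcal I/H,S/H),\qquad p_A\mapsto p_{[A]_H},\ \ s_{a,A}\mapsto s_{a,[A]_H},\]
once we check the images satisfy relations (1)--(5). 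Relations (1)--(4) are immediate; relation (5) for $A\in\mathcal J$ holds because $[A]_H\in S/H$ (as $\mathcal J\subseteq S$) and the terms indexed by $\alpha$ with $\theta_\alpha(A)\in H$ vanish — there $[\theta_\alpha(A)]_H=\emptyset$ — so the sum over $\Delta_A$ collapses to the defining sum over the $\Delta$-set of $[A]_H$ in $\mathcal B/H$. The same collapse shows $\pi_{(H,S)}$ sends each generator of $S_H$ to $p_{[A]_H}-\sum_{\alpha\in\Delta_{[A]_H}}s_{\alpha,\theta_\alpha([A]_H)}s^{\ast}_{\alpha,\theta_\alpha([A]_H)}$, which is $0$ since $[A]_H\in S/H$; hence $I_{(H,S)}\subseteq\ker\pi_{(H,S)}$.

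Now I recover $(H,S)$. If $p_A\in I_{(H,S)}$ then $p_{[A]_H}=\pi_{(H,S)}(p_A)=0$, so $[A]_H=\emptyset$, that is, $A\in H$, by the first assertion of Corollary~\ref{corollary:nonzerorel} applied to the relative generalized Boolean dynamical system $(\mathcal B/H,\mathcal L,\theta/H,\mathcal I/H,S/H)$; conversely $A\in H$ gives $[A]_H=\emptyset$, so $\Delta_{[A]_H}=\emptyset$ and $p_A\in S_H\subseteq I_{(H,S)}$. Thus $H_{I_{(H,S)}}=H$, and in particular $\mathcal B_{H_{I_{(H,S)}}}=\mathcal B_H$. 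For $A\in\mathcal B_H$ put $\rho_A\coloneqq p_A-\sum_{\alpha\in\Delta_{[A]_H}}s_{\alpha,\theta_\alpha(A)}s^{\ast}_{\alpha,\theta_\alpha(A)}$; then $\pi_{(H,S)}(\rho_A)=p_{[A]_H}-\sum_{\alpha\in\Delta_{[A]_H}}s_{\alpha,\theta_\alpha([A]_H)}s^{\ast}_{\alpha,\theta_\alpha([A]_H)}$, and since $[A]_H\in(\mathcal B/H)_{\text{reg}}$, if $\rho_A\in I_{(H,S)}$ the third assertion of Corollary~\ref{corollary:nonzerorel} forces $[A]_H\notin(\mathcal B/H)_{\text{reg}}\setminus(S/H)$, hence $[A]_H\in S/H$, hence $A\in S$ (because $S$ is an ideal of $\mathcal B$ containing $H$). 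The reverse inclusion $S\subseteq S_{I_{(H,S)}}$ is immediate from the definition of $S_H$. Therefore $(H_{I_{(H,S)}},S_{I_{(H,S)}})=(H,S)$, so $(H,S)\mapsto I_{(H,S)}$ is injective, and together with $I_{(H_I,S_I)}=I$ it is a bijection onto the lattice of graded ideals. A short argument with the generators shows $(H_1,S_1)\subseteq(H_2,S_2)\Rightarrow I_{(H_1,S_1)}\subseteq I_{(H_2,S_2)}$, while $I\mapsto(H_I,S_I)$ is monotone straight from its definition, so the bijection is a lattice isomorphism.

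The main obstacle is the construction of $\pi_{(H,S)}$ together with the inclusion $I_{(H,S)}\subseteq\ker\pi_{(H,S)}$: the delicate points are that relation (5) survives the replacement of $\mathcal J$ by $S/H$ — which works precisely because $\Delta_A$ and the $\Delta$-set of $[A]_H$ in $\mathcal B/H$ differ only in indices $\alpha$ for which $s_{\alpha,[\theta_\alpha(A)]_H}=0$ — and that membership $[A]_H\in S/H$ genuinely forces $A\in S$. Once these are in hand, Corollary~\ref{corollary:nonzerorel} for the quotient system does the remaining work, and everything else is the bookkeeping already packaged in Lemma~\ref{lemma:idealhered} and the analog of \cite[Proposition~7.3]{CARLSEN2020124037}.
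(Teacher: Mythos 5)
Your argument is correct and is essentially the route the paper intends: it defers the proof to the symbolic manipulations of \cite[Section~7]{CARLSEN2020124037}, and your quotient homomorphism $\pi_{(H,S)}$ together with Corollary~\ref{corollary:nonzerorel} applied to $(\mathcal B/H,\mathcal L,\theta/H,\mathcal I/H,S/H)$ is exactly the algebraic replacement for the gauge-invariant/graded uniqueness step used there, with Lemma~\ref{lemma:idealhered} and the Proposition~7.3 analog handling the other composition. The points you flag as delicate (the collapse of $\Delta_A$ to $\Delta_{[A]_H}$ via $s_{\alpha,\emptyset}=0$, and $[A]_H\in S/H\Rightarrow A\in S$ since $A\subseteq B\cup U\in S$) all check out, so your write-up in fact supplies the details the paper omits.
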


	\section{Desingularization} \label{section:desing}
	In this section, we will show that all generalized Boolean dynamical system algebras with countable alphabets $\mathcal L$ are Morita equivalent to the algebra of a generalized Boolean dynamical system without singular sets. We will prove it in the case of countably infinite $\mathcal L$ as the case of finite $\mathcal L$ is essentially the same.
	
	Let $(\mathcal B, \mathcal L, \theta, \mathcal I)$ be a generalized Boolean dynamical system where $\mathcal L = \{a_1, a_2, \ldots\}$ is some countably infinite enumeration. We will first construct a new generalized Boolean dynamical system $(\mathcal B^F, \mathcal L^F, \theta^F, \mathcal I^F)$. 

	\begin{definition}
		For $U \in \mathcal B$, we say that $U$ has \textit{no sinks} if $U$ has empty intersection with every set in $\mathcal B_{\text{sink}}$.
	\end{definition}
	
	\begin{remark}
		The emptyset has no sinks vacuously. Furthermore, if $U$ has no sinks, then every $V \subseteq U$ has no sinks.
	\end{remark}	

	We start by defining a sequence of ideals $X_i \subseteq \mathcal B$ for $i \geq 0$. Define $X_0 = \{\emptyset\}$. For $i \geq 1$, define $X_i = \{U \in \mathcal B \colon U \text{ has no sinks and } \Delta_U \subseteq \{a_1, \ldots, a_{i-1}\}\}$. Note that $X_1 = \{\emptyset\}$ because the only element $U \in \mathcal B$ with no sinks and $\Delta_U = \emptyset$ is the emptyset.
	
	\begin{lemma} The following are true
		\begin{enumerate}
			\item $X_i \subseteq X_{i+1}$
			\item $X_i$ is an ideal of $\mathcal B$
		\end{enumerate}
	\end{lemma}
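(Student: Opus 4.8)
The plan is to reduce both parts to two elementary facts about how the sets $\Delta_{(-)}$ interact with the lattice operations on $\mathcal B$, both of which are immediate from the hypothesis that each $\theta_\alpha$ is a morphism of generalized Boolean algebras, hence preserves $\emptyset$, finite joins, and finite meets (and so is order-preserving): namely $\Delta_{A\cup B}=\Delta_A\cup\Delta_B$ and $\Delta_{A\wedge B}\subseteq\Delta_A$ for all $A,B\in\mathcal B$. Combined with the observation recorded in the remark above---that ``has no sinks'' is inherited by subsets, and (by distributivity, since $(A\cup B)\wedge C=(A\wedge C)\cup(B\wedge C)$) is preserved under joins---these do essentially all the work.

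For part (1) I would first dispose of the degenerate index: by the remark immediately following the definition of the $X_i$ we have $X_0=\{\emptyset\}=X_1$, so $X_0\subseteq X_1$. For $i\ge 1$, if $U\in X_i$ then $U$ has no sinks and $\Delta_U\subseteq\{a_1,\dots,a_{i-1}\}\subseteq\{a_1,\dots,a_i\}$, whence $U\in X_{i+1}$.

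For part (2) the case $i=0$ is trivial, so fix $i\ge 1$. The verification has three steps: (a) $\emptyset\in X_i$, since $\emptyset$ has no sinks vacuously and $\theta_\alpha(\emptyset)=\emptyset$ for every $\alpha$ forces $\Delta_\emptyset=\emptyset$; (b) closure under joins: for $A,B\in X_i$, the set $A\cup B$ has no sinks by the remark and $\Delta_{A\cup B}=\Delta_A\cup\Delta_B\subseteq\{a_1,\dots,a_{i-1}\}$; (c) closure under meets with an arbitrary $B\in\mathcal B$: for $A\in X_i$, the set $A\wedge B\subseteq A$ has no sinks and $\Delta_{A\wedge B}\subseteq\Delta_A\subseteq\{a_1,\dots,a_{i-1}\}$. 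This matches the definition of an ideal given above. I do not anticipate a genuine obstacle; the only points needing care are the edge cases at $i=0,1$ (where the displayed formula for $X_i$ is not the operative description) and keeping straight that $\Delta_{(-)}$ is covariant under joins but only monotone under meets---precisely the asymmetry furnished by the morphism property of the $\theta_\alpha$.
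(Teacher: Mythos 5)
Your proof is correct and follows the same direct verification from the definitions as the paper. In fact it is slightly more complete: the paper's definition of an ideal requires both closure under joins and closure under meets with arbitrary elements of $\mathcal B$, and the paper's proof of part (2) only checks the latter (downward closure), whereas you also verify join-closure via $\Delta_{A\cup B}=\Delta_A\cup\Delta_B$ and the distributivity argument showing that ``has no sinks'' is preserved under unions. Both of those auxiliary facts are justified correctly by the morphism property of the $\theta_\alpha$, so there is nothing to fix.
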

	\begin{proof} We prove each statement separately.
		\begin{enumerate}
			\item Note that $\emptyset \in X_i$ for all $i \geq 0$. Thus, $X_0 \subseteq X_1$. Now let $i \geq 1$ and take $U \in X_i$. Then we know that $U$ has no sinks and that $\Delta_U \subseteq \{a_1, \ldots, a_{i-1}\}\subseteq \{a_1, \ldots, a_i\}$, hence $U \in X_{i+1}$ as well.
			
			\item $X_0$ is clearly an ideal. To show that $X_i$ is an ideal for $i \geq 1$, let $U \in X_i$ and let $V \subseteq U$. By our remark, $V$ has no sinks. Furthermore, $\Delta_V \subseteq \Delta_U \subseteq \{a_1, \ldots, a_{i-1}\}$, hence $V \in X_i$.
		\end{enumerate}
	\end{proof}

	For our above ideals $X_i$, we define $\mathcal B_i \coloneqq \mathcal B/X_i$. For $A \in \mathcal B$, we refer to the equivalence class $[A] \in \mathcal B/X_i$ as $[A]_i$. We define our new generalized Boolean algebra $\mathcal B^F$ as the set of all formal finite unions of sets from $\{B_i\}_{i=0}^{\infty}$, denoted by \[\mathcal B^F \coloneqq \bigoplus_{i=0}^\infty \mathcal B_i\]
	
	\begin{remark}
		Any $B \in \mathcal B^F$ can be represented by $\bigsqcup_{i=0}^{\infty} [B_i]_i$ where $B_i \in \mathcal B$ and all but a finite number of $B_i$ are the emptyset. For $A \in \mathcal B$, we abuse notation and refer to the copy of $A$ viewed in the $i$'th level of $\mathcal B^F$ as $[A]_i$.
	\end{remark}
	
	We define our language $\mathcal L^F$ to be \[\mathcal L^F \coloneqq \mathcal L \cup \{b_1, b_2, \ldots\}\] For $i = 1, 2, \ldots$ and $b_i \in \mathcal L^F$, we define \[\theta^F_{b_i}([A]_{i-1}) = [A]_i \text{ for } A \in \mathcal B\] Namely, $\theta^F_{b_i}$ is defined as the quotient map from $\mathcal B_i \rightarrow \mathcal B_{i+1}$ and $0$ everywhere else. For $a_i \in \mathcal L$, we define  \[\theta^F_{a_i}([A]_i) = [\theta_a(A)]_0\] where the map is $0$ everywhere else. 
	
	If the maps are well-defined, then they are clearly morphisms $\mathcal B^F \rightarrow \mathcal B^F$. The maps $\theta^F_{b_i}$ are well-defined because they are just quotient maps. To prove that $\theta_{a_i}^F$ is well-defined, let $A, B \in \mathcal B$ be such that $[A]_i = [B]_i$ which implies that $A \cup U = B \cup U$ for $U \in X_i$. Note that $\theta_{a_i}(A \cup U) = \theta_{a_i}(A) \cup \theta_{a_i}(U)$. Because $\Delta_U \subseteq \{a_1, \ldots, a_{i-1}\}$, we know that $\theta_{a_i}(U) = \emptyset$. Hence, $\theta_{a_i}(A \cup U) = \theta_{a_i}(A)$ and $\theta_{a_i}(B \cup U) = \theta_{a_i}(B)$, so we get that $\theta_{a_i}(A) = \theta_{a_i}(B)$ and we are done.
	
	To define our ideals $\mathcal I^F_{c}$ for $c \in \mathcal L_F$, recall that $\mathcal I^F_{c}$ must contain $\mathcal F^F_{c} = \{A \in \mathcal B^F \colon \exists B \in \mathcal B^F, A \subseteq \theta^F_{c}(B)\}$. It's not hard to see that $\mathcal F^F_{b_i} = \mathcal B_i$ and $\mathcal F_{a_i}^F = [\mathcal F_{a_i}]_0$ where the notation $[\mathcal F_{a_i}]_0$ denotes $\mathcal F_{a_i}$ viewed inside the copy $\mathcal B_0 = \mathcal B$. With this in mind, we define our ideals as \[\mathcal I^F_{a_i} = [\mathcal I_{a_i}]_0 \text{ and } \mathcal I^F_{b_i} = \mathcal B_i \text{ for } i \geq 1\]

	It's not hard to prove from this that $(\mathcal B^F, \mathcal L^F, \theta^F, \mathcal I^F)$ forms a generalized Boolean dynamical system.
	
	\begin{theorem}
		$(\mathcal B^F)_{\text{reg}} = \mathcal B^F$.
	\end{theorem}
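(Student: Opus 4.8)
The plan is to show that every element $B = \bigsqcup_{i=0}^{\infty} [B_i]_i \in \mathcal{B}^F$ is regular, i.e.\ that every $A \subseteq B$ satisfies $0 < |\Delta^F_A| < \infty$. Since the regular sets form an ideal (downward closed under $\subseteq$), and since a finite join of regular sets is regular, it suffices to treat the two kinds of ``basic'' pieces separately: a set of the form $[A]_i$ living purely on level $i \geq 1$, and a set $[A]_0$ living on level $0$. For the level-$0$ pieces, note that $\theta^F$ restricted to level $0$ mimics the original $\theta$ via the $a_j$'s, so $\Delta^F_{[A]_0} = \Delta_A$ as subsets of $\mathcal{L} \subseteq \mathcal{L}^F$; the subtlety is that $A$ need not be regular in the original system. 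This is where the construction of the $X_i$'s does its work, and I expect it to be the main obstacle.

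The key observation is that on level $0$ the ``sink'' sets of $\mathcal{B}^F$ are exactly those $[A]_0$ with $A \in \mathcal{B}_{\text{sink}}$ and, more importantly, a set $[A]_0$ with $\Delta_A$ infinite is \emph{not} a sink in $\mathcal{B}^F$ but must be shown to have some subset landing at a level; the point is that $[A]_0$ is \emph{not} actually below $B = \bigsqcup [B_i]_i$ unless $A \subseteq B_0$ in $\mathcal{B}_0 = \mathcal{B}$. So really I must show: any $A \subseteq B_0$ in $\mathcal{B}$ has $0 < |\Delta_A| < \infty$, and any $A$ with $[A]_i \subseteq [B_i]_i$ on level $i \geq 1$ has $0 < |\Delta^F_{[A]_i}| < \infty$. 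The latter is the clean case: $\theta^F_{b_{i+1}}([A]_i) = [A]_{i+1}$, which is nonempty precisely when $[A]_i \neq \emptyset$ in $\mathcal{B}_i = \mathcal{B}/X_i$, and no other generator acts nontrivially on level $i$ except possibly some $a_j$ with $j \leq i$ (those $a_j$ for which $\theta_{a_j}$ survives the quotient by $X_i$), and there are only finitely many of those; so $1 \leq |\Delta^F_{[A]_i}| \leq 1 + i < \infty$. If $[A]_i = \emptyset$ in $\mathcal{B}_i$ then $[A]_i$ is the zero element of $\mathcal{B}^F$ and contributes nothing.

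For the level-$0$ case I would argue as follows. Suppose $\emptyset \neq A \subseteq B_0$ with $B_0 \in \mathcal{B}$. I claim $A$ has no sinks in the original system and $\Delta_A$ is finite; then $A \in \mathcal{B}_{\text{reg}}$ by definition, and $\Delta^F_{[A]_0} = \Delta_A$ gives regularity in $\mathcal{B}^F$. To see $A$ has no sinks: if some $\emptyset \neq V \subseteq A$ lay in $\mathcal{B}_{\text{sink}}$, then $[V]_0$ would be a nonzero element of $\mathcal{B}^F$ below $B$ with $\Delta^F_{[V]_0} = \emptyset$, contradicting regularity --- but wait, this is what I am trying to prove, so instead I should observe directly that the theorem forces a restriction on which $B_0$ can appear: actually it does not, so the honest route is that $\mathcal{B}^F$ as defined only makes $[A]_0$ available for $A$ ranging over \emph{all} of $\mathcal{B}$, and hence the statement as written would be false unless level $0$ is handled by absorbing singular sets into higher levels. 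The resolution must be that the identification $\mathcal{B}_0 = \mathcal{B}/X_0 = \mathcal{B}/\{\emptyset\} = \mathcal{B}$ together with the maps $\theta^F_{b_1}: \mathcal{B}_0 \to \mathcal{B}_1$ being the quotient by $X_1 = \{\emptyset\}$ means $\theta^F_{b_1}$ is injective on level $0$, so \emph{every} nonempty $[A]_0$ has $b_1 \in \Delta^F_{[A]_0}$, immediately giving $|\Delta^F_{[A]_0}| \geq 1$; and for finiteness, $\Delta^F_{[A]_0} \subseteq \{b_1\} \cup \Delta_A$, which is problematic if $\Delta_A$ is infinite --- so the real content is that when $\Delta_A$ is infinite, $[A]_0$ still has a subset realizing the sink condition unless\ldots{} I would at this point examine the author's intended meaning of $\theta^F_{a_i}$ more carefully: the map $\theta^F_{a_i}([A]_i) = [\theta_{a_i}(A)]_0$ acts on \emph{level} $i$, not level $0$, so in fact no $a_i$ acts on level $0$ at all, giving $\Delta^F_{[A]_0} = \{b_1\}$ exactly, hence $|\Delta^F_{[A]_0}| = 1$ for every nonempty $[A]_0$. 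That is the crux: every element is pushed forward by exactly one ``bridge'' letter $b_{i+1}$ and possibly finitely many original letters $a_j$ ($j \leq i$) acting on level $i$, so $0 < |\Delta^F_A| < \infty$ for every nonempty $A \subseteq B$, and the general case follows by the ideal property of $(\mathcal{B}^F)_{\text{reg}}$. The main obstacle, then, is bookkeeping which generators act nontrivially on each level and verifying that $\theta^F_{a_j}$ genuinely vanishes off level $j$ (which follows from well-definedness, already checked: $\theta_{a_j}(U) = \emptyset$ for $U \in X_j$), and checking the ``no subset is a sink'' clause reduces entirely to the nonemptiness of the bridge maps.
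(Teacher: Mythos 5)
There is a genuine gap in the level-$i$ case for $i \geq 1$, and it sits exactly where the real content of the theorem lives. You assert that $\theta^F_{b_{i+1}}([A]_i) = [A]_{i+1}$ ``is nonempty precisely when $[A]_i \neq \emptyset$ in $\mathcal B_i = \mathcal B/X_i$.'' This is false: $[A]_{i+1} = \emptyset$ iff $A \in X_{i+1}$, while $[A]_i \neq \emptyset$ iff $A \notin X_i$, and since $X_i \subsetneq X_{i+1}$ in general there are elements $A \in X_{i+1} \setminus X_i$ (e.g.\ any $A$ with no sinks and $\Delta_A = \{a_i\}$) for which $[A]_i$ is a nonzero element of $\mathcal B^F$ that the bridge map kills. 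For such an element your argument produces no letter at all in $\Delta^F_{[A]_i}$, so the lower bound $|\Delta^F_{[A]_i}| \geq 1$ is unproved precisely in the one case the filtration $\{X_i\}$ was designed to handle. The missing step is: if $[A]_{i+1} = \emptyset$ but $[A]_i \neq \emptyset$, then $A \in X_{i+1} \setminus X_i$, so $A$ has no sinks and $\Delta_A \subseteq \{a_1, \ldots, a_i\}$; since $A \notin X_i$ and $A$ has no sinks, $\Delta_A \not\subseteq \{a_1, \ldots, a_{i-1}\}$, forcing $a_i \in \Delta_A$ and hence $\theta^F_{a_i}([A]_i) = [\theta_{a_i}(A)]_0 \neq \emptyset$, which supplies the required letter. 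This dichotomy (either $b_{i+1}$ survives, or $a_i$ must) is the entire proof of positivity in the paper.

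Two smaller remarks. Your level-$0$ discussion eventually lands in the right place ($\Delta^F_{[A]_0} = \{b_1\}$ for nonempty $A$, since $X_1 = \{\emptyset\}$ and no $a_j$ acts on level $0$), but only after asserting and retracting several contradictory claims; note also that only $a_i$ and $b_{i+1}$ can act nontrivially on level $i$, not all $a_j$ with $j \leq i$, so your finiteness bound $1+i$ is correct but looser than necessary. The reduction of the general case to single-level pieces via the ideal property of $(\mathcal B^F)_{\text{reg}}$ is fine and is essentially equivalent to the paper's direct computation on $B = \bigsqcup_i [B_i]_i$.
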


	\begin{proof}
		Let $\emptyset \neq B = \bigsqcup_{i=0}^{\infty} [B_i]_i \in \mathcal B^F$ where all but a finite number of $B_i$ are the emptyset.
		
		We first show that $|\Delta^{\mathcal B^F}_B| < \infty$. It's easy to calculate that $\theta_{a_i}([B_i]_i) = [\theta_{a_i}(B_i)]_0$ and $\theta_{b_{i+1}}([B_i]_i) = [B_i]_{i+1}$. Each of these non-empty only if $B_i \neq \emptyset$. Because $B_i$ is not the emptyset for only a finite number of $B_i$, we have that $|\Delta_B| < \infty$.
		
		We now show that $|\Delta^{\mathcal B^F}_B| > 0$. First note that because $B \neq \emptyset$, at least one of the $[B_i]_i \neq \emptyset \Rightarrow B_i \notin X_i$. If $\theta_{b_{i+1}}([B_i]_i) \neq \emptyset$, then $\theta_{b_{i+1}(B)} \neq \emptyset$ so $b_i \in \Delta^{\mathcal B^F}_B$. Now, assume that $\theta_{b_{i+1}}([B_i]_i) = [B_i]_{i+1} = \emptyset \Rightarrow B_i \in X_{i+1}$. Thus, we find that $B_i \in X_{i+1} \setminus X_i$. Because $B_i \in X_{i+1}$, we know that $B_i$ has no sinks and $\Delta^{\mathcal B}_{B_i} \subseteq \{a_1, \ldots, a_i\}$. Because $B_i \notin X_i$ and $B_i$ has no sinks, this means that $\Delta^{\mathcal B}_{B_i} \not \subseteq \{a_1, \ldots, a_{i-1}\}$. Hence, we conclude that $a_i \in \Delta^{\mathcal B}_{B_i}$ hence $\theta_{a_i}(B) = \theta_{a_i}([B_i]_i) = [\theta_{a_i}(B_i)]_0 \neq \emptyset$ so $a_i \in \Delta^{\mathcal B^F}_B$.
	\end{proof}

	Our main theorem is stated below.
	\begin{theorem}
		Let $(\mathcal B, \mathcal L, \theta, \mathcal I)$ be a generalized Boolean dynamical system with countably infinite alphabet $\mathcal L$. Then, $L_R(\mathcal B, \mathcal L, \theta, \mathcal I)$ and $L_R(\mathcal B^F, \mathcal L^F, \theta^F, \mathcal I^F)$ are Morita equivalent.
	\end{theorem}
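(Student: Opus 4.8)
The plan is to realize both algebras as partial skew group rings via Theorem~\ref{theorem:skewiso} and then apply the Morita equivalence criterion of \cite[Theorem~5.8]{zhang2025moritaequivalencesubringsapplications}, in exactly the same spirit as the Morita equivalence $L_R(\mathcal B, \mathcal L, \theta, \mathcal I)\sim L_R(\mathcal B, \mathcal L, \theta, \mathcal B)$ proved earlier in Section~\ref{partialskewgroup}. Concretely, I would first exhibit a natural embedding of inverse semigroups
\[
S_{(\mathcal B, \mathcal L, \theta, \mathcal I)} \hookrightarrow S_{(\mathcal B^F, \mathcal L^F, \theta^F, \mathcal I^F)}
\]
sending $(\alpha, A, \beta) \mapsto (\alpha, [A]_0, \beta)$, i.e. embedding each word $\alpha \in \mathcal L^*$ into $(\mathcal L^F)^*$ (no $b_i$'s used) and each $A \in \mathcal I_\alpha \cap \mathcal I_\beta$ into its copy $[A]_0$ in the zeroth level $\mathcal B_0 = \mathcal B$ of $\mathcal B^F$. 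One checks this is a well-defined injective inverse-semigroup homomorphism compatible with the gradings (the grading on the larger semigroup restricts correctly since $\varphi^F((\alpha,[A]_0,\beta)) = \alpha\beta^{-1}$ lies in $\mathbb F[\mathcal L] \le \mathbb F[\mathcal L^F]$), so the induced map on algebras is graded.

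Next I would verify the four hypotheses of \cite[Theorem~5.8]{zhang2025moritaequivalencesubringsapplications} for this inverse subsemigroup, following the template of the earlier Morita proof. Hypothesis (1) (the subsemigroup is an orthogonal, semi-saturated, strongly $E^*$-unitary inverse subsemigroup) is routine. For hypothesis (2), downward closure of the meet semilattice: given $(\alpha, [A]_0, \alpha) \in E_{(\mathcal B, \mathcal L, \theta, \mathcal I)}$ and $(\beta, C, \beta) \leq (\alpha, [A]_0, \alpha)$ in $E^F$, so $\beta = \alpha\gamma$ with $C \subseteq \theta^F_\gamma([A]_0)$, I must argue $\gamma \in \mathcal L^*$ (no $b_i$'s) and $C \in [\mathcal I_\gamma]_0$ — here the key point is that $\theta^F_\gamma([A]_0) \neq \emptyset$ forces $\gamma$ to avoid the letters $b_i$ entirely once one is past level $0$, since $\theta^F_{b_i}$ only acts nontrivially on level $i-1$ and $[A]_0$ sits at level $0$; thus effectively $\gamma \in \mathcal L^*$ and $\theta^F_\gamma([A]_0) = [\theta_\gamma(A)]_0$, landing in $[\mathcal I_\gamma]_0$ with $C \subseteq$ it, giving $C \in [\mathcal I_\gamma]_0$ as needed. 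Hypothesis (3) — that composing an idempotent from the small semigroup with an arbitrary element of the big semigroup keeps the range idempotent's set inside the appropriate small ideal — is again verified by the same two-case word-division argument used in Section~\ref{partialskewgroup}, using that $\mathcal I^F_{a_i} = [\mathcal I_{a_i}]_0$. Finally, hypothesis (4): I must show the subalgebra $L_R(\mathcal B, \mathcal L, \theta, \mathcal I) \subseteq L_R(\mathcal B^F, \mathcal L^F, \theta^F, \mathcal I^F)$ is not contained in any proper two-sided ideal; equivalently, the projections $\{p_{[A]_0}\}_{A \in \mathcal B}$ together with the $s_{b_i,\cdot}$'s generate the larger algebra as an ideal. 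This follows because every generator $p_{[A]_i}$ of $\mathcal B^F$ is reachable from level $0$ via the chain $s_{b_1,\cdot}s_{b_2,\cdot}\cdots s_{b_i,\cdot}$ and its adjoint applied to $p_{[A]_0}$, using $\theta^F_{b_k}([A]_{k-1}) = [A]_k$ — so the ideal generated by $\{p_{[A]_0}\}$ already contains all $p_{[A]_i}$, hence all generators of the larger algebra.

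The main obstacle I anticipate is hypothesis (2): one must be careful that the elements $b_i \in \mathcal L^F$ do not create idempotents below $(\alpha, [A]_0, \alpha)$ that escape the image of the small semigroup — in particular one has to track precisely how words mixing letters from $\mathcal L$ and from $\{b_i\}$ act, and confirm that any idempotent dominated by a level-$0$ idempotent $(\alpha, [A]_0, \alpha)$ with $\alpha \in \mathcal L^*$ is again of level-$0$ form with a word purely in $\mathcal L^*$ and with set in the correct $[\mathcal I_\gamma]_0$. This reduces to the combinatorial observation about where each $\theta^F_c$ is nonzero, but it requires a clean case analysis. Everything else mirrors the earlier Morita equivalence proof almost verbatim. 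Once all four hypotheses are checked, \cite[Theorem~5.8]{zhang2025moritaequivalencesubringsapplications} yields the Morita equivalence, completing the proof.
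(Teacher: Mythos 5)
There is a genuine gap, and it is located exactly where you placed your ``main obstacle'': the embedding $(\alpha, A, \beta) \mapsto (\alpha, [A]_0, \beta)$ is not a well-defined inverse semigroup homomorphism into $S_{(\mathcal B^F, \mathcal L^F, \theta^F, \mathcal I^F)}$. In the desingularized system the map $\theta^F_{a_i}$ is nonzero only on the $i$-th level $\mathcal B_i$ and the ideal $\mathcal I^F_{a_i}$ sits at level $0$; consequently $\theta^F_{a_i}([A]_0) = \emptyset$ for every $A$, and for a word $\alpha = a_i a_j$ of length two one computes $\mathcal I^F_{a_i a_j} = \{\emptyset\}$, so the proposed image $(\alpha, [A]_0, \beta)$ does not even exist as a nonzero element of $S_2$ once $|\alpha| \geq 2$. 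Products also break for length-one words: $(a_i, [A]_0, \omega)(a_j, [B]_0, \omega) = 0$ in $S_2$ while the corresponding product in $S_1$ can be nonzero. The paper's embedding instead stretches each letter via $h(a_i) = b_1 b_2 \cdots b_i a_i$, so that $\theta^F_{h(\alpha')}([A]_0) = [\theta_{\alpha'}(A)]_0$; routing through the $b$-letters is precisely what makes the map a homomorphism, and it is the algebraic incarnation of the classical ``add a tail'' desingularization. Your embedding omits this and cannot be repaired without it.

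A second, independent problem is your choice of Morita criterion. Even with the correct embedding, the image of $E_1$ in $E_2$ is \emph{not} downward closed: below $(\omega, [A]_0, \omega)$ one already finds idempotents such as $(b_1, [B]_1, b_1)$ with $[B]_1 \subseteq [A]_1$, which lie outside the image. So the hypothesis you propose to verify (downward closure of the meet subsemilattice, as in the earlier Morita equivalence $L_R(\mathcal B,\mathcal L,\theta,\mathcal I) \sim L_R(\mathcal B,\mathcal L,\theta,\mathcal B)$) simply fails here, and your anticipated conclusion that every idempotent dominated by a level-$0$ idempotent is again of level-$0$ form is false. The paper instead invokes the criterion of \cite[Theorem~5.7]{zhang2025moritaequivalencesubringsapplications}, whose hypotheses are that the inclusion $E_1 \subseteq E_2$ preserves finite covers and is tight, that $xsy$ is dominated by an element of $S_1$ for $x,y \in E_1$ and $s \in S_2$, and that $L_R(S_1)$ is contained in no proper two-sided ideal; verifying these requires the cover and tightness analysis carried out in Section~\ref{section:desing}, which has no counterpart in your outline. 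Your last point (the projections $p_{[A]_i}$ are reachable from level $0$ via the $b$-chain, so the subalgebra generates the whole algebra as an ideal) is correct and matches the paper.
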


	\begin{proof}
	We will show that $S_{(\mathcal B, \mathcal L, \theta, \mathcal I)} \subseteq S_{(\mathcal B^F, \mathcal L^F, \theta^F, \mathcal I^F)}$ and satisfies the conditions of \cite[Theorem~5.7]{zhang2025moritaequivalencesubringsapplications}. The proof is similar to \cite[Section~8]{zhang2025moritaequivalencesubringsapplications}, so we omit parts which are essentially the same. For ease of notation, let $S_1 \coloneqq S_{(\mathcal B, \mathcal L, \theta, \mathcal I)}$ and $S_2 \coloneqq S_{(\mathcal B^F, \mathcal L^F, \theta^F, \mathcal I^F)}$.

	Define a map \[h: \mathbb F[\mathcal L] \rightarrow \mathcal F[\mathcal L^F] \text{ that takes }a_i \mapsto b_1\ldots b_i a_i\] and define a map $S_1 \hookrightarrow S_2$ that takes $(\alpha, A, \beta) \mapsto (h(\alpha), [A]_0, h(\beta))$.

	\begin{lemma}
		The map $S_1 \hookrightarrow S_2$ that takes $(\alpha, A, \beta) \mapsto (h(\alpha), [A]_0, h(\beta))$ is a well defined injective morphism between inverse semigroups.
	\end{lemma}
	\begin{proof}
		Same as \cite[Theorem~8.8]{zhang2025moritaequivalencesubringsapplications}.
	\end{proof}

	\begin{lemma} \label{singlecalc} The following calculations are used throughout. Verifying them is easy so we omit the proof.
		\begin{enumerate}
		\item $\theta^F_{b_{i+1}b_{i+2}\ldots b_n}([A]_i) = [A]_n$ for $n \geq i$ and $\theta^F_{b_j}([A]_i) = \emptyset$ for $j \neq i+1$
		\item $\theta^F_{h(\alpha')}([A]_0)= [\theta_{\alpha'}(A)]_0$ for $\alpha' \in \mathcal L^{\ast}$
		\item $\mathcal I^F_{h(\alpha')b_1\ldots b_n} = [\mathcal I_{\alpha'}]_n$ for any $n \in [0, \infty)$ and $\omega \neq \alpha' \in \mathcal L^{\ast}$
		\item If $A \in \mathcal I_{\alpha'}$, then $[A]_n \in \mathcal I^F_{h(\alpha')b_1\ldots b_n}$
		\item If $\alpha_1 = b_1$ and $\mathcal I^F_{\alpha} \neq \{\emptyset\}$ for some $\alpha \in (\mathcal L^F)^{\ast}$, then $\alpha = h(\alpha')b_1\ldots b_n$ for some $\alpha' \in \mathcal L^{\ast}$ and $n \geq 0$
		\item $\mathcal I^F_{b_i} \cap \mathcal I^F_{b_j} = \{\emptyset\}$ for any $i \neq j$ and $\mathcal I^F_{b_i} \cap \mathcal I^F_{a_j} = \{\emptyset\}$ for any $i$ and $j$
		\item If a non-empty word $\alpha \in \mathcal (L^F)^{\ast}$ satisfies that $\alpha_1 = b_1$ and $\mathcal I^F_{\alpha} \neq \{\emptyset\}$, then $\alpha = h(\alpha')b_1\ldots b_n$ for $\alpha' \in \mathcal L^{\ast}$ and some $n \geq 0$
		\end{enumerate}
	\end{lemma}
	
	We now begin building lemmas to prove the conditions of \cite[Theorem~5.7]{zhang2025moritaequivalencesubringsapplications}. We first characterize the elements of $S_2$ that are also elements of $S_1$.
	
	\begin{lemma} \label{ins1}
		Let $(\alpha, A, \beta) \in S_2$. Then $(\alpha, A, \beta) \in S_1$ if and only if $A \in \mathcal B_0$ and, for $\alpha$ and $\beta$, they individually are either empty or their first characters are $b_1$.
	\end{lemma}

	\begin{proof}
	
		Same as \cite[Lemma~8.9]{zhang2025moritaequivalencesubringsapplications}.
		
	\end{proof}

	The below corollary follows immediately from restricting our statement to the idempotents $E_1 \subseteq E_2$.

	\begin{corollary} \label{ins1cor}
		Let $(\alpha, A, \alpha) \in E_2$. Then $(\alpha, A, \alpha) \in E_1$ if and only if $A \in \mathcal B_0$ and either $\alpha = \omega$ or $\alpha_1 = b_1$.
	\end{corollary}

	Recall that, for an inverse semigroup $S$, one can define an order on $S$ extending the order on the idempotents by $s_1 \leq s_2 \Leftrightarrow s_1 = es_2$ for some idempotent $e \in S$. Let $s^+$ denote all the elements of $S$ that are greater than $s$. The below lemma gives a sufficient condition for $s^+ \cap S_1 \neq \emptyset$. The result is used to simplify several subsequent proofs.
	
	\begin{lemma} \label{abovecase}
		Let $0 \neq (\alpha, A, \beta) \in S_2$. If $\alpha$ and $\beta$ are not the empty word and $\alpha_1 = b_1$ and $\beta_1 = b_1$, then $(\alpha, A, \beta)^+ \cap S_1 \neq \emptyset$.
	\end{lemma}

	\begin{proof}		
		Now assume that $\alpha_1 = b_1$ and $\beta_1 = b_1$. By Lemma~\ref{singlecalc} (5), we find that $\alpha = h(\alpha')b_1 \ldots b_n$ and $\beta = h(\beta')b_1\ldots b_m$ and by Lemma~\ref{singlecalc} (6) we see that $m = n$. 
		
		If $n = 0$, then we must have that $|\alpha'|, |\beta'| > 0$ and furthermore $\alpha$ and $\beta$ end with some $a_{\alpha}$ and $b_{\beta}$ respectively. By Lemma~\ref{singlecalc} (3), we find that $A \in \mathcal B_0$ and thus by by Lemma~\ref{ins1}, we find that $(\alpha, A, \beta) \in S_1$.
		
		Otherwise, we have that $n > 0$ and hence $A = [A]_n$ for some $A \in \mathcal B$ (abusing notation for $A$). Note that $[A]_n \subseteq [B_{\alpha}]_n$ for some $B_{\alpha} \in \mathcal I_{\alpha'}$ and similarly $[A]_n \subseteq [B_{\beta}]_n$ for some $B_{\beta} \in  \mathcal I_{\beta'}$ using Lemma~\ref{singlecalc} (3). Hence, there exists some $U \in X_n$ such that $A \subseteq B_{\alpha} \cup U$ and $A \subseteq B_{\beta} \cup U$. Note that $[A]_n \neq \emptyset$ so $A \not\subseteq U\Rightarrow A \setminus U \neq \emptyset$. Note furthermore that $A \setminus U \subseteq B_{\alpha} \cap B_{\beta}$, so we get that $A \setminus U \in \mathcal I_{\alpha'} \cap \mathcal I_{\beta'} \Rightarrow [A \setminus U]_0 \in \mathcal I^F_{h(\alpha')} \cap \mathcal I^F_{h(\beta')}$. Hence, we get that $(h(\alpha'), [A \setminus U] _0, h(\beta'))$ is a valid element of $S_2$. By Lemma~\ref{ins1}, we get that $(h(\alpha'), [A \setminus U] _0, h(\beta')) \in S_1$.
		
		Finally, note that $(\alpha, [A]_n, \alpha)(h(\alpha'), [A \setminus U] _0, h(\beta')) = (\alpha, [A \setminus U]_n, \beta) =  (\alpha, [A]_n, \beta)$ because $[A \setminus U]_n = [A]_n$ as $U \in X_n$. Hence, $ (\alpha, A, \beta) \leq (h(\alpha'), [A \setminus U] _0, h(\beta'))$, so we are done.
	\end{proof}

	\begin{corollary} \label{abovecasecorol}
		Let $0 \neq (\alpha, A, \alpha) \in E_2$. Then $(\alpha, A, \alpha)^+ \cap E_1 \neq \emptyset$ if and only if one of the following holds
		\begin{enumerate}
		\item $\alpha = \omega$ and $A \in \mathcal B_0$
		\item $\alpha_1 = b_1$
		\end{enumerate}
	\end{corollary}
	\begin{proof}
		(1) is sufficient because, in this case, we deduce that $(\alpha, A, \alpha) \in E_1$ by Corollary~\ref{ins1cor}. (2) is sufficient from an application of Lemma~\ref{abovecase}.
		
		To see that at least one of the conditions is necessary, let $(\beta, B, \beta) \in E_1$ such that $(\alpha, A, \alpha) \leq (\beta, B, \beta)$. This means that $\alpha = \beta \gamma$ and $A \subseteq \theta^F_{\gamma}(B)$. Using Corollary~\ref{ins1cor} we know that $B \in \mathcal B_0$ and ether $\beta = \omega$ or $\beta_1 = b_1$.
		
		If $\beta_1 = b_1$, then $\alpha_1 = \beta_1 = b_1$ which is (2), so we are done. If $\beta = \omega$, then there are two additional cases. If $\gamma = \omega$, then $\alpha = \omega$ and $A \subseteq B \in \mathcal B_0$ so $A \in \mathcal B_0$ which is (1), so are done. If $\gamma \neq \omega$, then $\alpha_1 = \gamma_1$. If $\gamma_1 \neq b_1$, then $\theta^F_{\gamma}(B) = \emptyset$ as $B \in \mathcal B_0$. Hence, $\alpha_1 = \gamma_1 = b_1$ which again is (1), so we are done.
		
	\end{proof}

	We now check the 4 conditions of \cite[Theorem~5.7]{zhang2025moritaequivalencesubringsapplications}.
	
	\begin{theorem}
		$E_1 \subseteq E_2$ preserves finite covers.
	\end{theorem}

	\begin{proof}
		We use \cite[Lemma~3.18]{zhang2025partialactionsgeneralizedboolean}. Let $(\alpha, A, \alpha) \in E_2$ with $\alpha \in \mathcal (L^F)^{\ast}$ and denote $A = \bigsqcup_{i=0}^{\infty} [A_i]_i \in \mathcal I^F_{\alpha}$ for $A_i \in \mathcal B$.
			
		If $(\alpha, A, \alpha)^+ \cap E_1 = \emptyset$ then there is nothing to prove. By Corollary~\ref{abovecasecorol}, the remaining cases are $\alpha = \omega$ and $A \in \mathcal B_0$ or $\alpha$ begins with $b_1$. 
		
		If $\alpha = \omega$ and $A \in \mathcal B_0$, then $(\alpha, A, \alpha) \in E_1$ by Corollary~\ref{ins1cor}. If $\alpha$ begins with $b_1$ and ends with some $a_i$, then $A \in \mathcal B_0$ so $(\alpha, A, \alpha) \in E_1$ by Corollary~\ref{ins1cor} again.
		
		Hence, we may assume that $\alpha$ begins with $b_1$ and ends with some $b_n$ with $n \geq 1$ and hence $A = [A]_n$ for some $A \in \mathcal B$. Note that $\alpha = h(\alpha')b_1\ldots b_n$ for some $\alpha' \in \mathcal L^{\ast}$ by Lemma~\ref{singlecalc} (7). Because $[A]_n \in \mathcal I^F_{\alpha} = [\mathcal I_{\alpha'}]_n$ we without loss of generality assume that $A \in \mathcal I_{\alpha'}$. Furthermore, $[A]_n \neq \emptyset$, hence there either exists $k \geq n$ such that $a_k \in \Delta^{\mathcal B}_A$ or $A$ has a sink.
		
		If there is some $k$ such that $a_k \geq n$ and $a_k \in \Delta^{\mathcal B}_A$, we see that \[0 \neq (\alpha b_{n+1} \ldots b_k a_k, [\theta_{a_k}(A)]_0, \alpha b_{n+1} \ldots b_k a_k) \leq (\alpha, [A]_n, \alpha)\] If $A$ has a sink, let $\emptyset \neq B \subseteq A$ such that $\Delta_B^{\mathcal B} = \emptyset$. Because $A \in \mathcal I_{\alpha'}$, so is $B$. Hence, the element $(h(\alpha'), [B]_0, h(\alpha')) \in S_2$ is a valid element. We will show that $(\gamma, C, \gamma) \leq (h(\alpha'), [B]_0, h(\alpha'))$ satisfy that $(\gamma, C, \gamma)(h(\alpha')b_1\ldots b_n, [A]_n, h(\alpha')b_1\ldots b_n) \neq 0$, which will prove our theorem.
		
		Note that if $(\gamma, C, \gamma) \leq (h(\alpha'), [B]_0, h(\alpha'))$, then $\gamma = h(\alpha')\beta$ with $\emptyset \neq C \subseteq \theta^F_{\beta}([B]_0)$. However, because $\Delta^{\mathcal B}_B = \emptyset$, $\beta$ has no elements of the form $a_i$. Hence, $\beta = b_1 b_2 \ldots b_m$ for some $m$ possibly $0$ and $C = [C]_m$ for some $C \in \mathcal B$ with $[C]_m \subseteq [B]_m$.
		
		Because $B \subseteq A$, we calculate that \[(h(\alpha')b_1\ldots b_m, [C]_m, h(\alpha')b_1\ldots b_m)(h(\alpha')b_1\ldots b_n, [A]_n, h(\alpha')b_1\ldots b_n) \neq 0\] so we are done.
		
	\end{proof}

	\begin{theorem}
		$E_1 \subseteq E_2$ is tight.
	\end{theorem}

	\begin{proof}
		We use \cite[Lemma~3.23]{zhang2025moritaequivalencesubringsapplications}. Let $(\alpha, A, \alpha) \in E_2$ be arbitrary. We want to prove that there exists some $y \in E_1$ and $\{y_1, \ldots, y_n\} \subseteq E_1$ such that $y_i \wedge (\alpha, A, \alpha) = 0$ and $\{y_1, \ldots, y_n, x\}$ form a cover for $y$.
	
		Because $(\alpha, A, \alpha) \in E_1$ implies that $y = (\alpha, A, \alpha)$ suffices, by Corollary~\ref{abovecasecorol} and the same reasoning as above, we may assume that $\alpha_1 = b_1$ and ends with some $b_n$ with $n \geq 0$. Hence, we let $\alpha = h(\alpha')b_1\ldots b_n$ and let $A = [A]_n$ with $A \in \mathcal I_{\alpha'}$.
		
		Again, we will consider $y = (h(\alpha'), [A]_0, h(\alpha'))$. We define \[y_i = (h(\alpha'a_i), [\theta_{a_i}(A)]_0, h(\alpha'a_i))\] where $y_i = 0$ whenever $\theta_{a_i}(A) = \emptyset$. These are all clearly valid elements and in $E_1$ by Lemma~\ref{ins1cor}.
		
		We will show that $\{y_1, \ldots, y_{n-1}, x\}$ form a cover for $y$. Let $(\beta, B, \beta) \leq (h(\alpha'), [A]_0, h(\alpha')) = y$ so $\beta = h(\alpha')\gamma$ and $B \subseteq \theta^F_{\gamma}([A]_0)$. It suffices to show that there is some $(\beta', B', \beta') \in E_2$ such that $(\beta', B', \beta') \leq (\beta, B, \beta)$ and $\beta'$ has prefix of any of the following: \[h(\alpha'a_1), \ldots, h(\alpha'a_{n-1}), h(\alpha')b_1\ldots b_n\]
		
		Note that because we have $\emptyset \neq B \subseteq \theta^F_{\gamma}([A]_0)$, we have that if $\gamma$ is non-empty then $\gamma_1 = 1$ and $\mathcal I^F_{\gamma} \neq \{\emptyset\}$. Hence, $\gamma = h(\gamma')b_1 \ldots b_m$ for some $\gamma' \in \mathcal L^{\ast}$.
		
		If $\gamma' \neq \omega$, then if $\gamma'_1 = a_i$ for $i \leq n-1$, then $\gamma$ has $h(\alpha'a_i)$ as a prefix. Otherwise, $\gamma$ has $h(\alpha')b_1\ldots b_n$ as a prefix, so we are done.
		
		Otherwise, $\gamma' = \omega$ and hence $\gamma = b_1 \ldots b_m$. By similar techniques as before, we see that at least one of the following is a valid non-zero elements \[(h(\alpha')b_1 \ldots b_m b_{m+1}, \theta^F_{b_{m+1}}(B), h(\alpha')b_1 \ldots b_m b_{m+1})\] \[(h(\alpha')b_1 \ldots b_m a_m, \theta^F_{b_{m+1}}(a_m), h(\alpha')b_1 \ldots b_m a_m)\] Repeating this enough times, we eventually get an element of $E_2$ with our desired prefix, so we are done.
		
		Finally, it's obvious that there is no element $(\alpha, A, \alpha) \in E_2$ that has both prefix of the form $h(\alpha'a_i)$ for some $i \leq n-1$  and $h(\alpha')b_1 \ldots b_n$. Hence, $y_i \wedge x = 0$ for all $i \leq n-1$ and we are done.
	\end{proof}

	\begin{theorem}
		For all $x, y \in E_1$ and $s \in S_2$ there exists $s' \in S_1$ such that $xsy \leq s'$.
	\end{theorem}
	\begin{proof}
		Let $s = (\alpha, A, \beta)$, $x = (\alpha_x, A_x, \alpha_x)$, and $y  = (\alpha_y, A_y, \alpha_y)$. By Lemma~\ref{ins1cor}, we have that $A_x, A_y \in \mathcal B_0$ and $\alpha_x, \beta_x$ are either the empty word or their first characters are $b_1$.
		
		If $xsy = 0$ then we are done, so from now on assume that $xsy \neq 0$ so we write $(\alpha', A', \beta') = xsy$. We will show that $(\alpha', A', \beta')$ satisfies the condition for Lemma~\ref{abovecase}.
		
		We first show that $\alpha'$ is either the empty word or begins with $b_1$, and similarly for $\beta'$. We will prove this only for $\alpha'$ as the proof for $\beta'$ is the same (or derived from taking inverses).
		
		Note that $x(\alpha', A', \beta') = x(xsy) = xsy \neq 0$. Assume for the sake of contradiction that $\alpha'$ is not the empty word and begins with a different character than $b_1$. If $\alpha_x = \omega$, then $(\omega, A_x, \omega)(\alpha', A', \beta') = (\alpha', \theta^F_{\alpha'}(A_x) \cap A', \beta') = 0$ because $\theta^F_{\alpha'_1}(A_x) = \emptyset$ as $A_x \in \mathcal B_0$ and $\alpha'_1 \neq b_1$. If $|\alpha_x| \neq \omega$, then $x(\alpha', A', \beta') = 0$ because $\alpha_x$ begins with $b_1$ and $\alpha'$ does not, so we are done.
		
		We now know that $\alpha', \beta'$ are either the empty word or begin with $b_1$. If both begin with $b_1$, then we are done by Lemma~\ref{abovecase}. Hence, assume that at least one of $\alpha', \beta'$ are $\omega$. We will show that in this case $(\alpha', A', \beta') \in E_1$. By Lemma~\ref{ins1}, it suffices to show that $A \in \mathcal B_0$. Again the proof is the same for $\beta' = \omega$, so we just assume that $\alpha = \omega$.
		
		As before, $x(xsy) = xsy$ so $(\alpha_x, A_x, \alpha_x)(\omega, A', \beta') = (\omega, A', \beta')$. We calculate that \[(\alpha_x, A_x, \alpha_x)(\omega, A', \beta') = (\alpha_x, A_x \cap \theta_{\alpha_x}(A'), \beta'\alpha_x) = (\omega, A', \beta')\] Hence, we get that $A' \subseteq A_x \in \mathcal B_0$, so $A' \in \mathcal B_0$ and we are done.		
	\end{proof}

	\begin{theorem}
		$L_R(S_1) \subseteq L_R(S_2)$ is contained in no proper two sided ideal
	\end{theorem}
	
	\begin{proof}
		Our map on the algebras takes $p_A \mapsto p_{[A]_0}$ and $s_{a_n, A} \mapsto s_{b_1\ldots b_n, [A]_0}$. Let $I$ be any two-sided ideal in $L_R(\mathcal B^F, \mathcal L, \theta^F, \mathcal I^F)$ containing $L_R(\mathcal B, \mathcal L, \theta, \mathcal I)$. We know from Lemma~\ref{lemma:idealhered} that $H_I = \{A \in \mathcal B^F \colon p_A \in I\} \subseteq \mathcal B^F$ is hereditary. Furthermore, it's not hard to see that if $\{p_B\}_{B \in \mathcal B^F} \subseteq I$, then $I = L_R(\mathcal B^F, \mathcal L, \theta^F, \mathcal I^F)$.
		
		We aim to show that $\{p_A \colon A \in \mathcal B^F\} \subseteq I$. Because $I$ contains $L_R(\mathcal B, \mathcal L, \theta, \mathcal I)$, we have that $[A]_0 \in H_I$ for all $A \in \mathcal B$. Because $H_I$ is hereditary and $\theta^F_{i+1}([A]_i) = [A]_{i+1}$, $\theta^F_2([A]_1) = [A]_2$, for all $A \in \mathcal B$ and $i \geq 0$ we have that $[A]_i \in H_I$ and hence $p_{[A]_i} \in I$. Any $A \in \mathcal B^F$ is formed by some finite disjoint unions of these sets, so $p_A \in I$ as well, and we are done.
	\end{proof} 
	
	\end{proof}

	\section{Generalized Boolean Dynamical Systems as Generalized Labelled Spaces} \label{section:labelled}
	It has been established that generalized Boolean dynamical system algebras generalize the class of algebras derived from weakly left-resolving normal labelled spaces (see Example~\ref{example:labelledspace}). In this section, we will expand the class of labelled spaces by defining \textit{generalized labelled spaces} and show that the class of weakly left-resolving normal generalized labelled spaces algebras is equivalent to the class of generalized Boolean dynamical system algebras. This will act as a Stone duality and offer a graphical interpretation of a generalized Boolean dynamical system.

	\begin{definition}
	For a directed graph $\mathcal E$, an alphabet $\mathcal L$, and a labelling on the edges $\mathcal E^1 \rightarrow \mathcal L$, the pair $(\mathcal E, \mathcal L)$ is called a \textit{directed graph}. Note that we often abuse notation and refer to the labelling $\mathcal E^1 \rightarrow \mathcal L$ as $\mathcal L$ as well. For a subset $A \subseteq \mathcal E^0$ and $a \in \mathcal L$, we define the range operator $r(A, a) = \{v \in \mathcal E^0 \colon \exists e \in \mathcal E^1 \text{ such that } s(e) \in A, r(e) = v, \text{and } \mathcal L(e) = a\}$
	
	Let $\mathcal B$ be a set of subsets of $\mathcal E^0$ that is closed under finite unions and intersections and contains the emptyset. We say that $\mathcal B$ is an \textit{accommodating family} of $(\mathcal E, \mathcal L)$ if for all $A \in \mathcal B$ and $a \in \mathcal L$, we have that $r(A, a) \in \mathcal B$. A triplet $(\mathcal E, \mathcal L, \mathcal B)$ where $(\mathcal E, \mathcal L)$ is a directed graph and $\mathcal B$ is an accommodating family is called a \textit{labelled space}.

	If $\mathcal B$ is a closed under relative complements (i.e. $\mathcal B$ forms a generalized Boolean algebra), then the labelled space is called \textit{normal}. If for all $a \in \mathcal L$, the range operator $r(\cdot, a): \mathcal B \rightarrow \mathcal B$ preserves intersections, then the labelled space is called \textit{weakly left-resolving}. Note that if $(\mathcal E, \mathcal L, \mathcal B)$ is weakly left-resolving and normal, then $r(\cdot, a): \mathcal B \rightarrow \mathcal B$ is a morphism of generalized Boolean algebras.
	\end{definition}

	\begin{remark}
	Importantly, in our definition of an accommodating family, and contrary to other definitions \cite{bates2007c, Boava2021LeavittPA}, we do not require that the $r(a) = r(\mathcal E^0, a) \in \mathcal B$. This is similar to the jump made in \cite{CARLSEN2020124037} when moving from Boolean dynamical systems with compact range to generalized Boolean dynamical systems.
	\end{remark}

	\begin{definition}
	For a weakly left-resolving normal labelled space $(\mathcal E, \mathcal L, \mathcal B)$, define the ideal \[\mathcal F_a = \{A \in \mathcal B \colon A \subseteq r(B, a) \text{ for some } B \in \mathcal B\}\] If $\mathcal I$ is a set of ideals (treating $\mathcal B$ as a poset) indexed by $a \in \mathcal L$ such that $\mathcal F_a \subseteq \mathcal I_a$, we call the quadruplet $(\mathcal E, \mathcal L, \mathcal B, \mathcal I)$ a generalized weakly left-resolving normal labelled space.
	\end{definition}

	For the rest of this section, we will drop the notation ``weakly left-resolving normal'' and simply refer to our objects as generalized labelled spaces and labelled spaces.

	Fix a generalized labelled space $(\mathcal E, \mathcal L, \mathcal B, \mathcal I)$. We now define an associated $R$-algebra $L_R(\mathcal E, \mathcal L, \mathcal B, \mathcal I)$.

	\begin{definition}
	$L_R(\mathcal E, \mathcal L, \mathcal B, \mathcal I)$ is the universal associative $R$-algebra generated by the set $\{p_A\}_{A \in \mathcal B} \cup \{s_{a, A}, s_{a, A}^{\ast}\}_{a \in \mathcal L, A \in \mathcal I_a}$ and the following relations:
	\begin{enumerate}
        \item $p_{A \cap B} = p_A p_B$, $p_{A \cup B} = p_A + p_B - p_{A \cap B}$, and $p_{\emptyset} = 0$ for all $A, B \in \mathcal B$
        \item $p_B s_{a, A} = s_{a, A} p_{r(B, a)}$ and $s_{a, A}^{\ast} p_B = p_{r(B, a)} s_{a, A}^{\ast}$ for all $B \in \mathcal B$, $a \in \mathcal L$, and $A \in \mathcal I_a$
        \item $s^{\ast}_{a, A} s_{a', A'} = \delta_{a, a'} p_{A \cap A'}$ for all $a, a' \in \mathcal L$ and $A \in \mathcal I_a$ and $A' \in \mathcal I_{a'}$
        \item $s_{a, A} p_B = s_{a, A \cap B}$ and $p_B s_{a, A}^{\ast} = p_B s_{a, A \cap B}$ for all $a \in \mathcal L$, $B \in \mathcal B$, and $A \in \mathcal I_a$
        \item $p_A = \sum_{a \in \Delta_A} s_{a, r(A, a)} s_{a, r(A, a)}^{\ast}$ for all $A \in \mathcal B_{\text{reg}}$
    \end{enumerate}

	\end{definition}
	
	We now list two theorems that relate generalized labelled spaces to previous work. Their proofs are both easy, but tedious, checks on relations.

	\begin{theorem}
	If $r(a) \in \mathcal B$ for all $a \in \mathcal L$, then, defining $\mathcal I_a = \{A \in \mathcal B \colon A \subseteq r(a)\}$ for all $a \in \mathcal L$, we have that \[L_R(\mathcal E, \mathcal L, \mathcal B) \cong L_R(\mathcal E, \mathcal L, \mathcal B, \mathcal I)\] where $L_R(\mathcal E, \mathcal L, \mathcal B)$ is the labelled Leavitt path algebra defined in \cite{Boava2021LeavittPA}.
	\end{theorem}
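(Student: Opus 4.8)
The plan is to exhibit mutually inverse $R$-algebra homomorphisms by invoking the universal properties of the two algebras. First I would record the basic compatibilities under the hypothesis $r(a)\in\mathcal B$ for all $a\in\mathcal L$: the set $\mathcal I_a=\{A\in\mathcal B\colon A\subseteq r(a)\}$ is an ideal of $\mathcal B$ (closed under unions, downward closed under meets) and contains $\mathcal F_a$ (since $r(B,a)\subseteq r(a)$), so $(\mathcal E,\mathcal L,\mathcal B,\mathcal I)$ is a genuine generalized labelled space; moreover $r(a)$ is the \emph{maximum} element of $\mathcal I_a$. Also the dynamical data $\theta_a=r(\cdot,a)$, the sets $\Delta_A=\{a\colon r(A,a)\neq\emptyset\}$, and the families $\mathcal B_{\mathrm{reg}},\mathcal B_{\mathrm{sink}}$ are literally the ones appearing in both presentations, so relation (5) of the two algebras is indexed over the same sets.

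Next I would define maps on generators. Let $\Phi\colon L_R(\mathcal E,\mathcal L,\mathcal B)\to L_R(\mathcal E,\mathcal L,\mathcal B,\mathcal I)$ be given by $p_A\mapsto p_A$, $s_a\mapsto s_{a,r(a)}$, $s_a^\ast\mapsto s_{a,r(a)}^\ast$, and let $\Psi\colon L_R(\mathcal E,\mathcal L,\mathcal B,\mathcal I)\to L_R(\mathcal E,\mathcal L,\mathcal B)$ be given by $p_A\mapsto p_A$, $s_{a,A}\mapsto s_a p_A$, $s_{a,A}^\ast\mapsto p_A s_a^\ast$ (legitimate since $A\subseteq r(a)$ for $A\in\mathcal I_a$). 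The bulk of the argument is checking that each map respects the defining relations of its source algebra, hence induces an algebra homomorphism by universality; I would do this relation by relation, the only nonroutine items being the relations of type (3) and (5). For $\Phi$: one has $\Phi(s_a^\ast s_b)=s_{a,r(a)}^\ast s_{b,r(b)}=\delta_{a,b}p_{r(a)\cap r(b)}=\delta_{a,b}p_{r(a)}$, and for $A\in\mathcal B_{\mathrm{reg}}$, using $r(A,a)\subseteq r(a)$ together with relation (5) of the generalized algebra, $\Phi\bigl(\sum_{a\in\Delta_A}s_a p_{r(A,a)}s_a^\ast\bigr)=\sum_{a\in\Delta_A}s_{a,r(A,a)}s_{a,r(A,a)}^\ast=p_A$. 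For $\Psi$: $\Psi(s_{a,A}^\ast s_{a',A'})=p_A s_a^\ast s_{a'}p_{A'}=\delta_{a,a'}p_A p_{r(a)}p_{A'}=\delta_{a,a'}p_{A\cap A'}$ because $A,A'\subseteq r(a)$, and for $A\in\mathcal B_{\mathrm{reg}}$, $\Psi\bigl(\sum_{a\in\Delta_A}s_{a,r(A,a)}s_{a,r(A,a)}^\ast\bigr)=\sum_{a\in\Delta_A}s_a p_{r(A,a)}s_a^\ast=p_A$ by the Cuntz--Krieger relation of $L_R(\mathcal E,\mathcal L,\mathcal B)$; the remaining relations (1), (2), (4) translate by direct substitution.

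Finally I would check $\Phi$ and $\Psi$ are mutually inverse on generators, hence everywhere. Both compositions fix every $p_A$. For the partial isometries, $\Phi\Psi(s_{a,A})=\Phi(s_a p_A)=s_{a,r(a)}p_A=s_{a,r(a)\cap A}=s_{a,A}$ since $A\subseteq r(a)$, while $\Psi\Phi(s_a)=\Psi(s_{a,r(a)})=s_a p_{r(a)}=s_a$. This last identity $s_a p_{r(a)}=s_a$ (equivalently $p_{r(a)}s_a^\ast=s_a^\ast$) in $L_R(\mathcal E,\mathcal L,\mathcal B)$ is the single point that must be quoted rather than derived in a line: it is among the defining relations of the labelled Leavitt path algebra in \cite{Boava2021LeavittPA} (or is an immediate consequence of them), and it is precisely what makes $r(a)$ the maximal source-local projection for $s_a$. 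I expect this bookkeeping around $s_a p_{r(a)}=s_a$ and the reconciliation of the two forms of relation (5) to be the only places requiring any care; everything else is mechanical substitution justified by the universal properties.
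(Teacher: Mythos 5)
Your proposal is correct and is exactly the ``easy but tedious check on relations'' that the paper asserts but omits: mutually inverse maps $s_a\mapsto s_{a,r(a)}$ and $s_{a,A}\mapsto s_ap_A$ verified against the universal relations. Your flagged dependence on $s_ap_{r(a)}=s_a$ is the right point of care, and it does hold in the labelled Leavitt path algebra of \cite{Boava2021LeavittPA}, so the argument goes through.
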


	This theorem shows that when $B$ is accommodating in the sense that it contains $r(a)$ for all $a \in \mathcal L$ as required by \cite{Boava2021LeavittPA}, then, defining $\mathcal I$ appropriately, we can construct a generalized labelled space algebra that is isomorphic to the labelled Leavitt path algebra. An easy corollary is that generalized labelled space algebras are a generalization of labelled Leavitt path algebras. This is the same idea as \cite[Example~4.1]{CARLSEN2020124037}. 

	\begin{theorem}
	Define the morphisms of generalized Boolean algebras $\theta_a \coloneqq r(\cdot, a): \mathcal B \rightarrow \mathcal B$ for all $a \in \mathcal L$. Then $(\mathcal B, \mathcal L, \theta, \mathcal I)$ is a generalized Boolean dynamical system and \[L_R(\mathcal E, \mathcal L, \mathcal B, \mathcal I) \cong L_R(\mathcal B, \mathcal L, \theta, \mathcal I)\]
	\end{theorem}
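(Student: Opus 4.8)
The plan is to observe that, under the identification $\theta_a \coloneqq r(\cdot,a)$, the defining data of the generalized labelled space and of the associated generalized Boolean dynamical system coincide verbatim, so that the isomorphism is simply the one induced by sending each generator to the generator of the same name.

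\emph{First}, I would check that $(\mathcal B, \mathcal L, \theta, \mathcal I)$ really is a generalized Boolean dynamical system. Since $(\mathcal E,\mathcal L,\mathcal B)$ is normal and weakly left-resolving, $\mathcal B$ is a generalized Boolean algebra and each $\theta_a = r(\cdot,a)\colon\mathcal B\to\mathcal B$ is a morphism of generalized Boolean algebras, as recorded in the definition of a labelled space. The set $\mathcal F_\alpha^{(\mathcal B,\mathcal L,\theta,\mathcal I)}=\{A\in\mathcal B : A\subseteq\theta_a(B)\text{ for some }B\in\mathcal B\}$ is, by the definition of $\theta_a$, exactly the ideal $\mathcal F_a$ of the labelled space, and $\mathcal I$ was assumed to be a family of ideals of $\mathcal B$ with $\mathcal F_a\subseteq\mathcal I_a$; hence the axiom $\mathcal F_\alpha\subseteq\mathcal I_\alpha$ holds and $(\mathcal B,\mathcal L,\theta,\mathcal I)$ is a generalized Boolean dynamical system. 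Moreover $\theta_a(A)\neq\emptyset\iff r(A,a)\neq\emptyset$, so the sets $\Delta_A$, the ideal $\mathcal B_{\mathrm{reg}}$, and the classes of singular sets and sinks are literally the same objects in both settings; since we are in the non-relative case, relation~(5) of Definition~\ref{definition:boolrelations} is imposed on precisely the same family $\mathcal B_{\mathrm{reg}}$ as relation~(5) of $L_R(\mathcal E,\mathcal L,\mathcal B,\mathcal I)$.

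\emph{Next}, I would compare the two lists of relations: relations~(1) and~(3) are identical; relation~(2) becomes identical once $\theta_a$ is rewritten as $r(\cdot,a)$; relation~(4) likewise; and relation~(5) was matched in the previous step. Thus the assignment $p_A\mapsto p_A$, $s_{a,A}\mapsto s_{a,A}$, $s_{a,A}^{\ast}\mapsto s_{a,A}^{\ast}$ carries a generating set satisfying the relations of one algebra to elements satisfying the relations of the other, and symmetrically in the reverse direction. Invoking the universal property of $L_R(\mathcal E,\mathcal L,\mathcal B,\mathcal I)$ one gets an $R$-algebra homomorphism to $L_R(\mathcal B,\mathcal L,\theta,\mathcal I)$, and the universal property of $L_R(\mathcal B,\mathcal L,\theta,\mathcal I)$ gives a homomorphism the other way; each acts as the identity on the chosen generating set, so they are mutually inverse, yielding the isomorphism.

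There is no genuine obstacle here: the argument is a bookkeeping check that the two presentations agree under $\theta_a=r(\cdot,a)$. The only point deserving a moment's attention is verifying that the \emph{derived} notions---$\Delta_A$, regular sets, singular sets, sinks, and hence the domain of relation~(5)---are computed identically on both sides; this is immediate from $\theta_a(A)=r(A,a)$, so the check is routine.
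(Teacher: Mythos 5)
Your proposal is correct and matches the paper's approach: the paper explicitly describes the proof as an ``easy, but tedious, check on relations,'' which is exactly the bookkeeping you carry out by identifying $\theta_a$ with $r(\cdot,a)$, matching the two presentations relation by relation, and invoking the two universal properties to obtain mutually inverse homomorphisms. Your additional care in verifying that the derived notions ($\mathcal F_a$, $\Delta_A$, $\mathcal B_{\mathrm{reg}}$, sinks) coincide on both sides is precisely the content the paper leaves implicit.
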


	This theorem shows that all labelled space algebras can be written as generalized Boolean dynamical system algebras. To show equivalence, it remains to show the converse. To do this, we will employ the Stone dual, which we now define. To minimize confusion of notation, we will refer to elements of $\mathcal B$ with the lower-case letters and reserve uppercase letters for subsets of $\mathcal B$, contrary to the other portions of the paper.

	\begin{definition}
	A subset $F \subseteq \mathcal B$ of a generalized Boolean algebra is called a \textit{filter} if:
	\begin{enumerate}
		\item $F \neq \emptyset$ and $F \neq \mathcal B$
		\item For all $x \in F$ and $y \in \mathcal B$ where $x \leq y$, we have that $y \in F$
		\item For all $x, y \in F$, we have that $x \wedge y \in F$
	\end{enumerate}
	For $x \in \mathcal B$, denote $V_x = \{F \subseteq \mathcal B \text{ is a filter } \colon x \in F\}$ to be the set of filters containing $x$. Note that for all $x, y \in \mathcal B$ we have that $V_x \cup V_y = V_{x \vee y}$, $V_x \cup V_y = V_{x \wedge y}$, and $V_x \setminus V_y = V_{x \setminus V_y}$. Namely, the set $\{V_x \colon x \in \mathcal B\}$ forms a generalized Boolean algebra on the filters of $\mathcal B$.
	\end{definition}

    From now on, fix a generalized Boolean dynamical system $(\mathcal B, \mathcal L, \theta, \mathcal I)$. Denote the set of all filters on $\mathcal B$ as $X(\mathcal B)$ and define $\mathcal E_g^0 \coloneqq X(\mathcal B)$. For $a \in \mathcal L$, extend the domain of $\theta_a$ to subsets of $\mathcal B$ in the obvious way. For $F, F' \in X(\mathcal B)$ and $a \in \mathcal L$, add an edge $(F, F')$ with label $a$ to $\mathcal E^1$ if $\theta_a(F) \subseteq F'$. This forms a labeled graph $(\mathcal E_g, \mathcal L_g)$. As mentioned previously, the set $\mathcal B_g \coloneqq \{V_x \colon x \in \mathcal B\}$ forms a generalized Boolean algebra on $\mathcal E_g^0$. To prove that $\mathcal B_g$ is an accommodating family, we must show that $r(V_x, a) \in \mathcal B$ for all $x \in \mathcal B$. From the following computation, this will be obvious.

	\begin{lemma} \label{lemma:boolrangecomp} $r(V_x, a) = V_{\theta_a(x)}$ for $x \in \mathcal B$.
    \end{lemma}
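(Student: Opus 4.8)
The plan is to prove the two inclusions $r(V_x, a) \subseteq V_{\theta_a(x)}$ and $V_{\theta_a(x)} \subseteq r(V_x, a)$ separately, the first being essentially immediate and the second requiring the construction of one auxiliary filter.

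For the inclusion $r(V_x,a)\subseteq V_{\theta_a(x)}$, I would take a filter $F'\in r(V_x,a)$. By definition of the range operator there is an edge labelled $a$ from some filter $F$ with $x\in F$ to $F'$, which by construction of the graph means $\theta_a(F)\subseteq F'$ (here $\theta_a$ is understood as extended to subsets of $\mathcal B$ elementwise). Since $x\in F$ we get $\theta_a(x)\in\theta_a(F)\subseteq F'$, so $F'\in V_{\theta_a(x)}$. This argument also silently covers the degenerate case $\theta_a(x)=\emptyset$ (which includes $x=\emptyset$): no filter contains the minimal element $\emptyset$, so then both $V_{\theta_a(x)}$ and $r(V_x,a)$ are empty.

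For the reverse inclusion, suppose $F'\in V_{\theta_a(x)}$, i.e.\ $\theta_a(x)\in F'$. Then $\theta_a(x)\neq\emptyset$, hence $x\neq\emptyset$, so the principal filter $F=\{y\in\mathcal B : x\leq y\}$ is a genuine filter: it is nonempty and proper (it omits $\emptyset$ since $x\neq\emptyset$), it is upward closed by construction, and it is closed under meets because $x\leq y$ and $x\leq z$ force $x\leq y\wedge z$. Clearly $x\in F$, so $F\in V_x$. To check that $\theta_a(F)\subseteq F'$, I would use that $\theta_a$ is a morphism of generalized Boolean algebras and therefore order-preserving (it preserves $\wedge$, and $u\leq v\Leftrightarrow u\wedge v=u$); thus $x\leq y$ implies $\theta_a(x)\leq\theta_a(y)$, and since $\theta_a(x)\in F'$ and $F'$ is upward closed, $\theta_a(y)\in F'$. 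Hence there is an $a$-labelled edge from $F$ to $F'$, giving $F'\in r(V_x,a)$.

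The only step that needs any thought is exhibiting a filter $F$ with $x\in F$ and $\theta_a(F)\subseteq F'$; the principal filter generated by $x$ does the job, and the whole content of the lemma is the verification that it is a filter together with the order-preservation of $\theta_a$. I do not expect a genuine obstacle here.
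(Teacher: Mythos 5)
Your proposal is correct. The first inclusion is argued exactly as in the paper. For the reverse inclusion the paper and you both reduce to exhibiting a witness filter $F$ with $x \in F$ and an $a$-labelled edge from $F$ to $F'$, but you choose different witnesses: the paper takes the preimage filter $\theta_a^{-1}(F') = \{y \in \mathcal B : \theta_a(y) \in F'\}$, which is the \emph{largest} filter mapping into $F'$, and appeals to the standard fact that preimages of filters under morphisms are filters; you take the principal filter $\uparrow x = \{y : x \leq y\}$, the \emph{smallest} filter containing $x$, verify the filter axioms by hand, and use only that a morphism of generalized Boolean algebras is order-preserving (which follows from preservation of $\wedge$). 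Your route is slightly more elementary and self-contained, and your explicit handling of the degenerate case $\theta_a(x) = \emptyset$ (both sides empty, since no filter contains the minimal element) is a nice touch the paper leaves implicit; the paper's route has the minor advantage of producing a canonical filter depending only on $F'$ and $a$, which can be reused if one wants to describe \emph{all} $a$-edges into $F'$. Either argument is complete and correct.
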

    \begin{proof}
    We first show that $r(V_x, a) \subseteq V_{\theta_a(x)}$. For $F \in V_x$, any edge emitted from $F$ labelled by $a$ must be to a filter $F'$ such that $\theta_a(F) \subseteq F'$. Because $\theta_a(x) \in \theta_a(F)$, we have that $\theta_a(x) \in F'$ and hence $F' \in V_{\theta_a(x)}$.

    We now show that $r(V_x, a) \supseteq V_{\theta_a(x)}$. Let $F$ be a filter that contains $\theta_a(x)$. Consider the set $\theta_a^{-1}(F) = \{x \in \mathcal B \colon \theta_a(x) \in F\}$. It's well known that $\theta_a^{-1}(F) \in V_x$ is also a filter. Furthermore, we have that $\theta_a(\theta_a^{-1}(F)) \subseteq F$ so there is an edge $(\theta_a^{-1}(F), F)$ labelled with $a$. Hence, $F \in r(V_x, a)$ and we are done.
    \end{proof}

	Using Lemma~\ref{lemma:boolrangecomp}, we see that $r(V_x, a) \cap r(V_y, a) = V_{\theta_a(x)} \cap V_{\theta_a(y)} = V_{\theta_a(x) \wedge \theta_a(y)} = V_{\theta_a(x \wedge y)} = r(V_{x \wedge y}, a) = r(V_x \cap V_y, a)$ which shows that the space $(\mathcal E, \mathcal L, \mathcal B)$ is weakly left-resolving. In addition, the space is obviously normal.

	Using the isomorphism $\mathcal B \cong \mathcal B_g$, we form ideals $(\mathcal I_g)_a$ that correspond to each $\mathcal I_a$. These satisfy our required properties, so we form a generalized labelled space $(\mathcal E_g, \mathcal L_g, \mathcal B_g, \mathcal I_g)$.

	\begin{theorem}
		$L_R(\mathcal B, \mathcal L, \theta, \mathcal I) \cong L_R(\mathcal E_g, \mathcal L_g, \mathcal B_g, \mathcal I_g)$ through the map \[p_x \mapsto p_{V_x} \text{ for all } x \in \mathcal B \]\[s_{a, y} \mapsto s_{a, V_y} \text{ for all } a \in \mathcal L, y \in \mathcal I_a\]
	\end{theorem}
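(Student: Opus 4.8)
The plan is to combine Stone duality for generalized Boolean algebras with the equivalence, already established above, between generalized labelled space algebras and generalized Boolean dynamical system algebras, so that the only genuinely new ingredient is the separation property of filters.

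First I would show that $\Phi\colon \mathcal B \to \mathcal B_g$, $x \mapsto V_x$, is an isomorphism of generalized Boolean algebras. Surjectivity is immediate from the definition $\mathcal B_g = \{V_x \colon x \in \mathcal B\}$, and the identities $V_{x\vee y}=V_x\cup V_y$, $V_{x\wedge y}=V_x\cap V_y$, $V_{x\setminus y}=V_x\setminus V_y$, $V_\emptyset=\emptyset$ recorded in the definition of the $V_x$ show $\Phi$ is a morphism preserving the minimal element. For injectivity, if $x\neq y$ then, after possibly interchanging them, $x\setminus y\neq\emptyset$, and the principal up-set $\{w\in\mathcal B \colon x\setminus y\leq w\}$ is a filter containing $x$ but not $y$ (since $x\setminus y\wedge y=\emptyset$), so $V_x\neq V_y$; this is the separation half of Stone duality.

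Next I would record that $\Phi$ intertwines the dynamics. Lemma~\ref{lemma:boolrangecomp} gives $r(V_x,a)=V_{\theta_a(x)}$, i.e.\ $r(\Phi(x),a)=\Phi(\theta_a(x))$ for all $x\in\mathcal B$, $a\in\mathcal L$. Writing $\theta^g_a\coloneqq r(\cdot,a)\colon\mathcal B_g\to\mathcal B_g$ for the dynamics attached to the labelled space (as in the theorem identifying a labelled space algebra with a generalized Boolean dynamical system algebra), this says $(\Phi,\mathrm{id}_{\mathcal L})$ is an isomorphism of generalized Boolean dynamical systems from $(\mathcal B,\mathcal L,\theta,\mathcal I)$ onto $(\mathcal B_g,\mathcal L_g,\theta^g,\mathcal I_g)$, once the auxiliary data are checked to correspond: since $\Phi$ is injective and $V_\emptyset=\emptyset$, we get $\theta_a(x)\neq\emptyset\iff r(V_x,a)\neq\emptyset$, hence $\Delta_x=\Delta_{V_x}$ and therefore $\Phi(\mathcal B_{\mathrm{reg}})=(\mathcal B_g)_{\mathrm{reg}}$, so the non-relative choice $\mathcal J=\mathcal B_{\mathrm{reg}}$ is respected; and $\mathcal F_a=\Phi^{-1}(\mathcal F^g_a)$ by the same intertwining, so $\mathcal F^g_a\subseteq(\mathcal I_g)_a$ holds by the construction $\mathcal I_g=\{\Phi(A)\colon A\in\mathcal I\}$.

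Finally, an isomorphism of generalized Boolean dynamical systems induces a ($\mathbb Z$-graded) isomorphism of the associated $R$-algebras: sending generators $p_x\mapsto p_{\Phi(x)}$, $s_{a,y}\mapsto s_{a,\Phi(y)}$, $s_{a,y}^{\ast}\mapsto s_{a,\Phi(y)}^{\ast}$, one checks that each of the five families of relations in Definition~\ref{definition:boolrelations} is carried to a relation in the target (they are phrased purely in terms of $\cap$, $\cup$, $\theta$, $\Delta$ and the $\mathcal I_\alpha$, all preserved by $\Phi$), so universality yields a homomorphism, $\Phi^{-1}$ yields a homomorphism back, and the two are mutually inverse because they are so on generators. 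Applying this to $(\Phi,\mathrm{id})$ gives $L_R(\mathcal B,\mathcal L,\theta,\mathcal I)\cong L_R(\mathcal B_g,\mathcal L_g,\theta^g,\mathcal I_g)$, and composing with the previously proved isomorphism $L_R(\mathcal E_g,\mathcal L_g,\mathcal B_g,\mathcal I_g)\cong L_R(\mathcal B_g,\mathcal L_g,\theta^g,\mathcal I_g)$ (the theorem that a labelled space algebra equals the algebra of its associated dynamical system, read in the needed direction) produces the desired isomorphism; tracking the generators through both steps shows it is exactly $p_x\mapsto p_{V_x}$ and $s_{a,y}\mapsto s_{a,V_y}$. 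I do not expect a serious obstacle here: the only points requiring real argument are the separation statement (injectivity of $x\mapsto V_x$) and the matching of regular sets under $\Phi$ — without the latter, relation (5) of Definition~\ref{definition:boolrelations} would fail to transport — and everything else is the mechanical verification that $\Phi$ commutes with all of the structure and that a morphism of dynamical systems induces a morphism of algebras through the universal property.
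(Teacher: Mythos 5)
Your proposal is correct and follows essentially the same route as the paper: the paper's proof is exactly the observation that Lemma~\ref{lemma:boolrangecomp} lets one transport each of the five relations across the correspondence $x \mapsto V_x$ and invoke universality in both directions. The only content you add is to make explicit the injectivity of $x \mapsto V_x$ (via the principal filter on $x \setminus y$) and the matching of $\Delta_x$ with $\Delta_{V_x}$ and of the regular ideals, which the paper leaves implicit in ``standard symbolic manipulations.''
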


	\begin{proof}
		The proof just uses Lemma~\ref{lemma:boolrangecomp} and is the same standard symbolic manipulations.
	\end{proof}

	\begin{corollary}
	The class of generalized Boolean dynamical system algebras and the class of generalized labelled space algebras are the same.
	\end{corollary}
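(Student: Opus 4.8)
The plan is to deduce the corollary from the two preceding theorems by proving the two evident containments of classes (up to $R$-algebra isomorphism). No new computation is required; the content lies entirely in checking that each construction lands inside the other framework.

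First I would treat the inclusion of the class of generalized labelled space algebras into the class of generalized Boolean dynamical system algebras. Given a generalized labelled space $(\mathcal E, \mathcal L, \mathcal B, \mathcal I)$, the theorem that sets $\theta_a \coloneqq r(\cdot, a)\colon \mathcal B \to \mathcal B$ already asserts that $(\mathcal B, \mathcal L, \theta, \mathcal I)$ is a generalized Boolean dynamical system with $L_R(\mathcal E, \mathcal L, \mathcal B, \mathcal I) \cong L_R(\mathcal B, \mathcal L, \theta, \mathcal I)$. The one point worth recording is that the hypothesis $\mathcal F_a \subseteq \mathcal I_a$ in the labelled-space sense coincides with the hypothesis $\mathcal F_a \subseteq \mathcal I_a$ in the dynamical-system sense, since $\{A \in \mathcal B \colon A \subseteq r(B,a)\ \text{for some}\ B\}$ and $\{A \in \mathcal B \colon A \subseteq \theta_a(B)\ \text{for some}\ B\}$ are literally the same set once $\theta_a = r(\cdot,a)$; hence every $L_R(\mathcal E, \mathcal L, \mathcal B, \mathcal I)$ lies in the right-hand class.

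For the reverse inclusion, let $(\mathcal B, \mathcal L, \theta, \mathcal I)$ be a generalized Boolean dynamical system. I would invoke the Stone-dual construction developed above: the labelled graph $(\mathcal E_g, \mathcal L_g)$ on the filter space $X(\mathcal B)$, the accommodating family $\mathcal B_g = \{V_x \colon x \in \mathcal B\}$, and the ideals $\mathcal I_g$ transported along the isomorphism $\mathcal B \cong \mathcal B_g$. Lemma~\ref{lemma:boolrangecomp} gives $r(V_x, a) = V_{\theta_a(x)}$, from which the text deduces that $\mathcal B_g$ is accommodating and that the space is weakly left-resolving and normal, and the concluding isomorphism theorem then yields $L_R(\mathcal B, \mathcal L, \theta, \mathcal I) \cong L_R(\mathcal E_g, \mathcal L_g, \mathcal B_g, \mathcal I_g)$. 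The last item to verify — and the closest thing here to an obstacle — is that $(\mathcal E_g, \mathcal L_g, \mathcal B_g, \mathcal I_g)$ is a \emph{legitimate} generalized labelled space, i.e.\ that the $a$-th source ideal obeys $\mathcal F_a^{(\mathcal E_g, \mathcal L_g, \mathcal B_g)} \subseteq (\mathcal I_g)_a$; but Lemma~\ref{lemma:boolrangecomp} gives $\mathcal F_a^{(\mathcal E_g,\mathcal L_g,\mathcal B_g)} = \{V_x \colon x \in \mathcal F_a^{(\mathcal B,\mathcal L,\theta,\mathcal I)}\}$, which is contained in $\{V_x \colon x \in \mathcal I_a\} = (\mathcal I_g)_a$ because $\mathcal F_a \subseteq \mathcal I_a$ by assumption. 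Combining the two inclusions yields the stated equality of classes.
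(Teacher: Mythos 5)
Your proposal is correct and is exactly the argument the paper intends: the corollary follows by combining the two preceding isomorphism theorems, one for each containment of classes. Your extra care in checking that the conditions $\mathcal F_a \subseteq \mathcal I_a$ agree under $\theta_a = r(\cdot, a)$ and that the Stone-dual construction genuinely produces a generalized labelled space is a welcome addition of detail the paper leaves implicit, but it is the same route.
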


	\bibliographystyle{abbrv}
	\bibliography{DynamicSystem}
	
\end{document}